\documentclass[11pt]{article}
\usepackage{amsthm,amsmath,mathrsfs,enumitem,amssymb,mathtools,mathrsfs}
\usepackage{xcolor}
\usepackage{hyperref}
\usepackage[english]{babel}
\usepackage{graphicx}
\usepackage[hypcap]{caption}
\usepackage{enumitem}
\usepackage[normalem]{ulem}

\textheight 240mm
\textwidth 6.5in
\evensidemargin=0in
\oddsidemargin=0in
\topmargin=-0.9in

\parindent 20pt
\parskip 2pt

\theoremstyle{plain}
\newtheorem{theorem}{Theorem}[section]
\newtheorem{lemma}[theorem]{Lemma}
\newtheorem{proposition}[theorem]{Proposition}
\newtheorem{corollary}[theorem]{Corollary}

\theoremstyle{definition}
\newtheorem{definition}[theorem]{Definition}
\theoremstyle{remark}
\newtheorem{remark}[theorem]{Remark}
\numberwithin{equation}{section}

\setlist{nosep}
\newcommand{\indf}{1_}
\newcommand*\samethanks[1][\value{footnote}]{\footnotemark[#1]}

\title{Large Time Behaviour and the Second Eigenvalue Problem for Finite State Mean-Field Interacting Particle Systems}
\author{Sarath Yasodharan\thanks{Supported by the Indo-French Centre for Applied Mathematics.} \thanks{Supported by a fellowship grant from the Centre for Networked Intelligence (a Cisco CSR initiative) of the Indian Institute of Science, Bengaluru.} and Rajesh Sundaresan\samethanks[1]
\\ Indian Institute of Science}

\begin{document}
\maketitle 
\begin{abstract}
This article examines large time behaviour of finite state mean-field interacting particle systems. Our first main result is a sharp estimate (in the exponential scale) on the time required for convergence of the empirical measure process of the $N$-particle system to its invariant measure; we show that when time is of the order of $\exp\{N\Lambda\}$ for a suitable constant $\Lambda \geq 0$, the process has mixed well and it is close to its invariant measure. We then obtain large-$N$ asymptotics of the second largest eigenvalue of the generator associated with the empirical measure process when it is reversible with respect to its invariant measure. We show that its absolute value scales as $\exp\{-N\Lambda\}$. The main tools used in establishing our results are the large deviation properties of the empirical measure process from its large-$N$ limit. As an application of the study of large time behaviour, we also show convergence of the empirical measure of the system of particles to a global minimum of a certain `entropy' function when particles are added over time in a controlled fashion. The controlled addition of particles is analogous to the cooling schedule associated with the search for a global minimum of a function using the simulated annealing algorithm.
\vspace{10pt}

\noindent \textbf{MSC 2010 subject classifications:} Primary 60F10, 60K35; Secondary 47A75, 60J75, 68M20  \\
\noindent \textbf{Keywords:} Mean-field interaction, metastability, exit from a domain, large time behaviour, second eigenvalue problem, simulated annealing
\end{abstract}

\section{Introduction}
\label{section:introduction}
In this paper, we study large time behaviour and the second eigenvalue problem for Markovian mean-field interacting particle systems with jumps. Our motivation is to provide an understanding of  metastable phenomena in engineered systems such as load balancing networks~\cite{aghajani-etal-17,aghajani-ramanan-19,mukhopadhyay-etal-16,mitzhenmaker-00,graham-00}, wireless local area networks~\cite{bhattacharya-kumar-17,benaim-leboudec-08,bordenave-etal-12,kumar-etal-06,ramaiyan-etal-08,bianchi-98}, and in natural systems involving grammar acquisition, sexual evolution~\cite{panageas-vishnoi-16,panageas-etal-16}, epidemic spread~\cite{leonard-90,djehiche-kaj-95}, etc.  These systems are briefly described in Section~\ref{section:examples}.

Before we discuss our main contributions, let us describe the setting of our mean-field interacting particle system.
\subsection{The setting}
Let there be $N$ particles. Each particle has a state associated with it which comes from a finite set $\mathcal{Z}$; the state of the $n$th particle at time $t$ is denoted by $X_n^N(t) \in \mathcal{Z}$. The empirical measure of the system of particles at time $t$ is defined by
\begin{align*}
\mu_N(t) \coloneqq \frac{1}{N} \sum_{n=1}^N \delta_{X_n^N(t)} \in M_1(\mathcal{Z}),
\end{align*}
where $\delta_{\cdot}$ denotes the Dirac measure  on $\mathcal{Z}$. Here, $M_1(\mathcal{Z})$ denotes the space of probability measures on $\mathcal{Z}$ equipped with a metric that generates the topology of weak convergence\footnote{Since $\mathcal{Z}$ is a finite set, the total variation metric on $M_1(\mathcal{Z})$ generates this topology.} on $M_1(\mathcal{Z})$. Each particle has a set of allowed transitions; to define this, let $(\mathcal{Z}, \mathcal{E})$ be a directed graph with the interpretation that whenever $(z,z^\prime) \in \mathcal{E}$, a particle in state $z$ is allowed to move from $z$ to $z^\prime$.  To specify the interaction among the particles and  the evolution of the states of the particles over time, for each $(z,z^\prime) \in \mathcal{E}$, we are given a function $\lambda_{z,z^\prime}: M_1(\mathcal{Z}) \to [0, \infty)$. We consider the generator $\Psi^N$ acting on functions $f$ on $\mathcal{Z}^N$ by

\begin{align*}
\Psi^N f(\mathbf{z}^N) = \sum_{n=1}^N \sum_{z_n^\prime: (z_n, z_n^\prime) \in \mathcal{E}} \lambda_{z_n, z_n^\prime}(\overline{\mathbf{z}^N}) (f(\mathbf{z}^N_{n,z_n,z_n^\prime}) - f(\mathbf{z}^N));
\end{align*}
here $\overline{\mathbf{z}^N} = \frac{1}{N}\sum_{n=1}^N \delta_{z_n} \in M_1(\mathcal{Z})$ denotes the empirical measure associated with the configuration $\mathbf{z}^N \in \mathcal{Z}^N$, and $\mathbf{z}^N_{n,z_n,z_n^\prime}$ denotes the resultant configuration of the particles when the $n$th particle changes its state from $z_n$ to $z_n^\prime$.

We make the following assumptions on the model:
\begin{enumerate}[label=({A\arabic*})]
\item The graph $(\mathcal{Z}, \mathcal{E})$ is irreducible. \label{assm:a1}
\item The functions $\lambda_{z,z^\prime}(\cdot)$, $(z,z^\prime) \in \mathcal{E}$, are Lipschitz continuous on $M_1(\mathcal{Z} )$ and there exist positive constants $c, C $ such that $c \leq \lambda_{z,z^\prime}(\xi) \leq C$ for all $(z,z^\prime) \in \mathcal{E}$ and all $\xi \in M_1(\mathcal{Z})$. \label{assm:a2}
\end{enumerate}

Let $D([0, \infty), \mathcal{Z}^N)$ denote the space of $\mathcal{Z}^N$-valued functions on $[0, \infty)$ that are right continuous with left limits (c\`adl\`ag), equipped with the Skorohod-$J_1$ topology (see~{\cite[Chapter~3]{ethier-kurtz}}). Since the transition rates are bounded (by assumption~\ref{assm:a2}), the $D([0, \infty), \mathcal{Z}^N)$-valued martingale problem for $\Psi^N$ is well posed (see~{\cite[Exercise~15,~Section~4.1]{ethier-kurtz}}); therefore, given an initial configuration of the particles $(X_n^N(0), 1 \leq n \leq N) \in \mathcal{Z}^N$, we have a Markov process $\left( (X_n^N(t), 1 \leq n \leq N),  t \geq 0\right)$ whose sample paths are elements of $D([0,\infty),\mathcal{Z}^N)$. To describe the process in words, a particle in state $z$ at time $t$ moves to state $z^\prime$ at rate $\lambda_{z,z^\prime}(\mu_N(t))$ independent of everything else; i.e., the evolution of the state of a particle depends on the states of the other particles via the empirical measure of the states of all the particles, hence the name mean-field interaction. Note that the empirical measure process $(\mu_N(t), t \geq 0)$ is also a Markov process with state space  $M_1^N(\mathcal{Z})$ which is the  set of elements of $M_1(\mathcal{Z})$ that can arise as empirical measures of $N$-particle configurations on $\mathcal{Z}^N$. Its generator $L^N$ acting on functions $f$ on $M_1^N(\mathcal{Z})$ is given by
\begin{align*}
L^N f(\xi) = N \sum_{(z,z^\prime) \in \mathcal{E}} \xi(z) \lambda_{z,z^\prime}(\xi) \left[f\left( \xi + \frac{\delta_{z^\prime}}{N} - \frac{\delta_z}{N}\right) - f(\xi)  \right].
\end{align*}
Since $\mu_N$ is a Markov process on a finite state space, and since the graph $(\mathcal{Z},\mathcal{E})$ of allowed particle transitions is irreducible (Assumption~\ref{assm:a1}), there exists a unique invariant probability measure for $\mu_N$, which we denote by $\wp_N$. Also, let $P_\nu$ denote the law of $(\mu_N(t), t \geq 0)$ with initial condition $\mu_N(0) = \nu \in M_1^N(\mathcal{Z})$ (i.e.~the solution to the $D([0,\infty),M_1(\mathcal{Z}))$-valued martingale problem for $L^N$ with initial condition $\nu \in M_1^N(\mathcal{Z})$)  and let $E_\nu$ denote integration with respect to $P_\nu$; in both $P_\nu$ and $E_\nu$ we suppress the dependence on $N$ for ease of readability.

\newpage
\subsection{Main results}
Let us now discuss the main results of the paper.
\subsubsection{Convergence to the invariant measure}
\label{subsection:main-result-1}
Our first main result is on the time required for the process $\mu_N$ to equilibrate. This time grows at an exponential rate with the number of particles $N$ where the rate is the constant $\Lambda$ which will be defined in~\eqref{eqn:lambda-def}. 
\begin{theorem} Given $\delta > 0$ there exist $\varepsilon > 0$ and $N_0 \geq 1 $ such that, with $T = \exp\{N(\Lambda+\delta)\}$,
\begin{align*}
\sup_{\nu \in M_1^N(\mathcal{Z})} \left| E_\nu (f(\mu_N(T) ))- \langle f, \wp_N \rangle \right| \leq \|f\|_\infty \exp\{-\exp(N\varepsilon)\}
\end{align*}
for all $N \geq N_0$ and all bounded Borel-measurable functions $f$ on $M_1(\mathcal{Z})$.
\label{thm:conv}
\end{theorem}
The result says that when time is of the order $\exp\{N(\Lambda+\delta)\}$ for any $\delta > 0$, the process has mixed well and it is close to its invariant measure. The proof of this result is based on the study of large time behaviour of the process $\mu_N$. Before we describe this, let us mention a well-known law of large numbers for the process $\mu_N$~\cite{mckean-67,gartner-88,sznitman-91,benaim-leboudec-08}. This will not only pave the way for a suitable description of the constant $\Lambda$ but also lead us to a converse of Theorem~\ref{thm:conv} and the significance of $\Lambda$.

Assume~\ref{assm:a1} and \ref{assm:a2}, and suppose that the initial conditions $\{\mu_N(0)\}_{N \geq 1}$ converge weakly to a deterministic measure $\nu \in M_1(\mathcal{Z})$. Then for any fixed $T > 0$, the empirical measure process $(\mu_N(t), 0 \leq t \leq T)$ converges in $D([0,T],M_1(\mathcal{Z}))$, in probability, to the solution to the ODE
\begin{align}
\dot{\mu}(t) = \Lambda_{\mu(t)}^* \mu(t), \, 0 \leq t \leq T, \, \mu(0) = \nu,
\label{eqn:MVE}
\end{align}
where, for any $\xi \in M_1(\mathcal{Z})$, $\Lambda_{\xi}$ denotes the $|\mathcal{Z}|\times |\mathcal{Z}|$ rate matrix\footnote{The rate matrix is given by $\Lambda_\xi(z,z^\prime)= \lambda_{z,z^\prime}(\xi)$ when $(z,z^\prime) \in \mathcal{E}$, $\Lambda_{\xi}(z,z^\prime) = 0$ when $(z,z^\prime) \notin \mathcal{E}$, and $\Lambda_{\xi}(z,z) = -\sum_{z^\prime \neq z} \lambda_{z,z^\prime}(\xi)$ for all $z \in \mathcal{Z}$.} when the  empirical measure is $\xi$, $\Lambda^*_\xi$ denotes its transpose, and $D([0,T],M_1(\mathcal{Z}))$ denotes the space of $M_1(\mathcal{Z})$-valued c\`adl\`ag functions on $[0,T]$ equipped with the Skorohod-$J_1$ topology (we assume that all paths are left continuous at $T$). The above ODE is referred to as the~McKean-Vlasov equation. The above convergence result enables one to view the process $\mu_N$ as a small random perturbation of the ODE~(\ref{eqn:MVE}).

We now elaborate on the large time behaviour of $\mu_N$. Suppose that the limiting McKean-Vlasov equation~(\ref{eqn:MVE}) has multiple $\omega$-limit sets (multiple stable equilibria and/or limit cycles). If we focus on a fixed time interval $[0,T]$, let the number of particles $N \to \infty$, and let the initial conditions $\mu_N(0)$ converge weakly to a deterministic limit $\nu$, then the mean-field convergence  suggests that the empirical measure process tracks the solution to the McKean-Vlasov equation~(\ref{eqn:MVE}) over $[0,T]$ starting at $\nu$. If we then let $T \to \infty$, the solution to the McKean-Vlasov equation goes to an $\omega$-limit set of~(\ref{eqn:MVE}) depending on the initial condition $\nu$. On the other hand, for a large but fixed $N$, the process would track the McKean-Vlasov equation with high probability and, as time becomes large, would thus enter a neighbourhood of the $\omega$-limit set corresponding to the initial condition $\nu$; however, because of the randomness in the finite-$N$ system, the process can exit the basin of attraction of this $\omega$-limit set. It is then likely to remain in a neighbourhood of another $\omega$-limit set for a large amount of time before transiting to the next one, and so on. These are examples of  \emph{metastable phenomena,} and it turns out that the sojourn times in the basin of attraction of an $\omega$-limit set are of the order $\exp\{O(N)\}$, as we shall soon see. The proof of Theorem~\ref{thm:conv} exploits quantitative estimates of the following metastable phenomena,
\begin{enumerate}[label=({\roman*})]
\item the mean time spent by the process near an $\omega$-limit set,
\item the probability of first reaching a particular $\omega$-limit set's neighbourhood before reaching  the neighbourhood of another one, and
\item  the probability of traversing the neighbourhoods of a given set of $\omega$-limit sets in a particular order.
\end{enumerate}
These quantifications are important in their own right as they help predict the performance of engineered systems, some of which we will describe in Section~\ref{section:examples}. We study the aforementioned metastability questions in Section~\ref{section:large_time_behaviour}. Such large time phenomena for diffusion processes with a small noise parameter have been studied in the past by Freidlin and Wentzell~\cite{freidlin-wentzell} under the ``general position condition" (see {\cite[Sections~6.4-6.6]{freidlin-wentzell}}). Hwang and Sheu~\cite{hwang-sheu-90} studied large time behaviour for diffusion processes under a more general setup. The key in both these works is the large deviation properties of the small noise diffusion processes over finite time durations, which have been established in {\cite[Chapter~5]{freidlin-wentzell}}. In this paper, we extend the analysis to Markov mean-field jump processes, specifically $(\mu_N(\cdot))_{N \geq 1}$.

The proof of Theorem~\ref{thm:conv} is carried out using lower bounds (Theorem~\ref{thm:mixing}) for the probability that, starting from any point in $M_1^N(\mathcal{Z})$, the process $\mu_N$ is in a small neighbourhood of one of the \emph{most stable}\footnote{See Section~\ref{subsection:convergence-invariant-measure} for a precise definition.} $\omega$-limit set(s) of the McKean-Vlasov equation~(\ref{eqn:MVE}) when time is of the order $\exp\{N(\Lambda-\delta_0)\}$, for a small $\delta_0>0$. The constant $\Lambda$ is defined using  ``costs of passages" between the $\omega$-limit sets of the McKean-Vlasov equation~\eqref{eqn:MVE}. These costs are quantified in terms of the large deviations rate function associated with the process $\mu_N$ via certain graphs called $W$-graphs (see Section~\ref{subsection:L-subsets} for the definition of $W$-graphs). See~\eqref{eqn:lambda-def} for a precise definition of $\Lambda$.

Our next result is, in a certain sense, a converse of Theorem~\ref{thm:conv}. Let $i_0$ be one of the most stable $\omega$-limit set(s) of~\eqref{eqn:MVE}.
\begin{theorem}
There exist $\nu_0 \in M_1(\mathcal{Z})$,  $\delta > 0$, $\beta >0$, $\rho_1 > 0$ and $N_0 \geq 1$ such that, with $T = \exp\{N(\Lambda-\delta)\}$, 
\begin{align*}
P_{\nu}(\mu_N(T) \in (\text{the } \rho_1 \text{ neighbourhood of } i_0)) \leq \exp\{-N\beta\}.
\end{align*}
for all $\nu$ in the $\rho_1$-neighbourhood of $\nu_0$ in $M_1^N(\mathcal{Z})$ and $N \geq N_0$.
\label{thm:conv-converse}
\end{theorem}
In other words, when time is of the order $\exp\{N(\Lambda - \delta) \}$, there are initial conditions $\nu \in M_1^N(\mathcal{Z})$ such that  the probability that $\mu_N(\exp\{N(\Lambda - \delta) \})$ is in a small neighbourhood of one of the most stable $\omega$-limit set(s) is exponentially small. The process is then not likely to have equilibrated because it has not visited a set with high invariant measure. Thus, Theorem~\ref{thm:conv} and Theorem~\ref{thm:conv-converse} together indicate that the constant $\Lambda$ is sharp (in the exponential scale) for the time required for equilibration of $\mu_N(\cdot)$.

A convergence result similar to that of Theorem~\ref{thm:conv} for the mean-field discrete-time setting but without the specification of the constant $\Lambda$ was established by Panageas and Vishnoi~\cite{panageas-vishnoi-16}. Let us reemphasise that our setting is a continuous-time setting. To identify the constant $\Lambda$ in  this setting, we must study the large deviation asymptotics in greater detail. Theorems~\ref{thm:conv} and~\ref{thm:conv-converse} combine time and the number of particles. Additionally, Theorem~\ref{thm:conv} is a statement that holds uniformly over all initial conditions unlike convergence bounds (over time) for a fixed number of particles with a given initial condition, e.g.~\cite{thai-15}. The proof of Theorem~\ref{thm:conv} is inspired by that of Hwang and Sheu's {\cite[Theorem~2.1,~Part~I]{hwang-sheu-90}} where similar results are established for small noise diffusions.

\subsubsection{Asymptotics of the second largest eigenvalue}
Our second main result is on the asymptotics of the second largest eigenvalue of the generator $L^N$ of the Markov process $\mu_N=(\mu_N(t), t \geq 0)$ when it is reversible with respect to its invariant measure $\wp_N$. For a fixed $N$, the convergence speed of the process $\mu_N$ to its invariant measure (over time) can be understood by studying the modulus of the second largest eigenvalue of $L^N$. We show that the modulus of the second largest eigenvalue of $L^N$ (which we denote by $\lambda_2^N$) scales as $\exp\{-N\Lambda\}$; here $\Lambda$~(defined in \eqref{eqn:lambda-def}) is the constant that appears in the statement of Theorem~\ref{thm:conv}. More precisely,
\begin{theorem}
\begin{align*}
\lim_{N \to \infty} \frac{1}{N} \log \lambda_2^N = -\Lambda.
\end{align*}
\label{thm:eval_problem}
\end{theorem}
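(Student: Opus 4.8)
The plan is to prove the two one-sided estimates $\liminf_{N}\frac1N\log\lambda_2^N\ge-\Lambda$ and $\limsup_{N}\frac1N\log\lambda_2^N\le-\Lambda$. Reversibility of $\mu_N$ w.r.t.\ $\wp_N$ makes $-L^N$ a nonnegative self-adjoint operator on $L^2(\wp_N)$, with eigenvalues $0=\lambda_1^N<\lambda_2^N\le\cdots$, real orthonormal eigenfunctions $\phi_1\equiv 1,\phi_2,\dots$, Dirichlet form $\mathcal E_N(f,f)=\langle -L^Nf,f\rangle_{\wp_N}=\tfrac12\sum_{\xi,\xi'}\wp_N(\xi)L^N(\xi,\xi')(f(\xi)-f(\xi'))^2$, and the Courant--Fischer identity $\lambda_2^N=\inf\{\mathcal E_N(f,f)/\mathrm{Var}_{\wp_N}(f):f\text{ nonconstant}\}$; these are the only facts about reversibility I use.

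\textbf{Lower bound.} This comes essentially for free from Theorem~\ref{thm:conv}. Fix $\delta>0$, let $\varepsilon=\varepsilon(\delta)>0$ and $N_0$ be as there, and put $T=\exp\{N(\Lambda+\delta)\}$. Apply the theorem to $f=\phi_2$, normalised so that $\|\phi_2\|_\infty=1$: since $\langle\phi_2,\wp_N\rangle=0$ and $E_\nu(\phi_2(\mu_N(T)))=(P_T\phi_2)(\nu)=e^{-\lambda_2^NT}\phi_2(\nu)$, we get $e^{-\lambda_2^NT}|\phi_2(\nu)|\le e^{-\exp(N\varepsilon)}$ for every $\nu\in M_1^N(\mathcal Z)$; evaluating at a point where $|\phi_2|$ attains its maximum $1$ gives $e^{-\lambda_2^NT}\le e^{-\exp(N\varepsilon)}$, i.e.\ $\lambda_2^N\ge\exp(N\varepsilon)/T=\exp\{N(\varepsilon-\Lambda-\delta)\}$ for $N\ge N_0$. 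Hence $\liminf_N\frac1N\log\lambda_2^N\ge\varepsilon(\delta)-\Lambda-\delta\ge-\Lambda-\delta$, and letting $\delta\downarrow0$ yields the claim.

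\textbf{Upper bound.} If $\Lambda=0$ there is nothing to prove, since a Gershgorin bound gives $\lambda_2^N\le 2\max_\xi|L^N(\xi,\xi)|=O(N)$, so $\frac1N\log\lambda_2^N\to0$. Assume $\Lambda>0$ and use the variational characterisation with a test function localised in the ``second metastable level'' of~(\ref{eqn:MVE}). Let $W$ be the large deviation rate function governing $\wp_N\asymp e^{-NW}$ obtained in Section~\ref{section:large_time_behaviour}. By the definition of $\Lambda$ in terms of the cost of movement between $\omega$-limit sets (and the Freidlin--Wentzell hierarchy established there) there is a set $D\subset M_1^N(\mathcal Z)$ — a neighbourhood of the $\omega$-limit set(s) forming that level — with $w:=\min_{\overline D}W$ attained at the ``floor'' of $D$, $\min_{\partial D}W=w+\Lambda$, and $\limsup_N\wp_N(D)<1$ (because $D$ is disjoint from a neighbourhood of a most stable $\omega$-limit set, which carries $\wp_N$-mass bounded away from $0$). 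Take $f=\indf{D}$. By reversibility $\mathcal E_N(f,f)=\sum_{\xi\in D,\,\xi'\notin D}\wp_N(\xi)L^N(\xi,\xi')\le\mathrm{poly}(N)\,e^{-N(w+\Lambda)}$ (using boundedness of the rates and the upper large deviation bound for $\wp_N$ on the thin inner boundary shell), while $\mathrm{Var}_{\wp_N}(f)=\wp_N(D)(1-\wp_N(D))\ge\tfrac12\,e^{-N(w+o(1))}$ (using $\limsup_N\wp_N(D)<1$ and the lower large deviation bound for $\wp_N$ at the floor of $D$). Dividing, $\lambda_2^N\le\mathrm{poly}(N)\,e^{-N(\Lambda-o(1))}$, so $\limsup_N\frac1N\log\lambda_2^N\le-\Lambda$. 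An alternative that avoids constructing a test function: if the $\limsup$ were $>-\Lambda$ then along a subsequence $\lambda_2^N e^{N(\Lambda-\delta)}\to\infty$ for a suitable $\delta$, and feeding this into the reversible decay estimate $\|P_t(\nu,\cdot)-\wp_N\|_{\mathrm{TV}}\le\tfrac12\wp_N(\nu)^{-1/2}e^{-\lambda_2^Nt}$ together with $\min_\xi\wp_N(\xi)\ge e^{-\mathrm{poly}(N)}$ forces the time-$e^{N(\Lambda-\delta)}$ law to be within $o(1)$ of $\wp_N$ uniformly in the initial state, contradicting Theorem~\ref{thm:mixing} and the concentration $\wp_N(G)\to1$ on a neighbourhood $G$ of the most stable $\omega$-limit set(s).

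\textbf{Main obstacle.} The lower bound is immediate; all the difficulty is in the upper bound and sits at the interface between the spectral and large deviation sides. In the test-function route the crux is the geometry of $D$: one must use the structure theory of the $\omega$-limit sets of~(\ref{eqn:MVE}) to pin down a set whose flow/topological boundary carries quasipotential height exactly $w+\Lambda$ and no lower (ruling out cheaper indirect escape routes), and one needs sharp matching upper and lower large deviation estimates for $\wp_N$ both at the floor of $D$ and on a boundary layer of width $O(1/N)$. In the alternative route the crux is quantitative bookkeeping: matching the exponential rate of the non-equilibration probability supplied by Theorem~\ref{thm:mixing} against that of $\wp_N(G^c)$, and controlling $\min_\xi\wp_N(\xi)$ on the scale $e^{-o(\exp(N\delta))}$. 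Either way, what is really required is a two-sided control of $\wp_N$ near the metastable bottleneck that is sharp on the exponential scale.
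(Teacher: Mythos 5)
Your lower bound is correct and actually cleaner than the paper's. Both rest on Theorem~\ref{thm:conv}, but you apply it directly to $f=\phi_2$ and use $P_T\phi_2=e^{-\lambda_2^N T}\phi_2$ to read off $\lambda_2^N T\ge\exp(N\varepsilon)$ immediately, whereas the paper argues by contradiction: it bounds $\int_{B^c}|u_2^N|^2\,d\wp_N$ via the LDP lower bound on return probabilities and bounds $\int_B|u_2^N|^2\,d\wp_N$ via Theorem~\ref{thm:conv}, then derives a contradiction with the $L^2$-normalisation of $u_2^N$. Your one-line derivation gives the same conclusion and is a welcome simplification; the only point of care is that $\phi_2$ must be extended to a bounded function on $M_1(\mathcal Z)$ to invoke Theorem~\ref{thm:conv} as stated, which is trivial here.

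Your upper bound is where the proposal diverges from the paper and where the genuine gaps lie. Your primary (Courant--Fischer with $f=1_D$) route is a standard metastability device that the paper does not use, and you correctly flag the construction of $D$ as the crux; but there is a second, unacknowledged issue: the Dirichlet form of the indicator is supported on the one-jump inner boundary shell, a set of width $O(1/N)$ that \emph{shrinks with $N$}, while the LDP upper bound of Theorem~\ref{thm:invariant-measure-gase} applies to sets fixed in $N$. You cannot directly conclude $\wp_N(\text{shell})\lesssim e^{-N(w+\Lambda)}$ from it. The usual way out is to replace $1_D$ by a test function interpolating across an $O(1)$-thick collar (\`a la Bovier--den~Hollander) and pay a $\mathrm{poly}(N)$ factor from the gradient squared, but as written the estimate on $\mathcal E_N(1_D,1_D)$ does not follow from the tools in the paper. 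Your alternative route is closer in spirit to what the paper actually does, but it too has gaps: it needs $\min_\xi\wp_N(\xi)\ge e^{-\mathrm{poly}(N)}$, which is plausible via the Markov-chain tree theorem but is not proved anywhere in the paper and is not a consequence of the LDP (which controls $\wp_N$ of balls, not singletons); and the asserted concentration $\wp_N(G)\to1$ on a neighbourhood of \emph{the} most stable $\omega$-limit set fails when $\tilde L_0$ has several elements. The paper circumvents both issues by working in $L^2(\wp_N)$ instead of total variation: it takes $f=1_{[K_{i_0}]_{\rho/2}}$, lower-bounds $\int_{[\nu_0]_{\rho/2}}|E_\nu f(\mu_N(T))-\langle f,\wp_N\rangle|^2\,d\wp_N\ge\wp_N([\nu_0]_{\rho/2})\cdot(\exp\{-N\delta\}-\exp\{-N\beta\})^2$ using~(\ref{eqn:ub_tpm}) and the LDP lower bound for the \emph{ball} $[\nu_0]_{\rho/2}$, and compares with the spectral upper bound $\int|E_\nu f(\mu_N(T))-\langle f,\wp_N\rangle|^2\,d\wp_N\le e^{-2\lambda_2^N T}$. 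This needs no pointwise control of $\wp_N$ and no uniqueness of the global minimiser. I would encourage you to adopt the $L^2$ formulation rather than TV for the contradiction argument; that closes the quantitative gaps you list, and is exactly what the paper does.
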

It turns out that $\Lambda$ can be positive only when there are metastable states in the limiting dynamics~(\ref{eqn:MVE}) (i.e.~when~(\ref{eqn:MVE}) possesses multiple $\omega$-limit sets). In such situations, one expects slower convergence to the invariant measure for large values of $N$. On the other hand, $\Lambda$ can be $0$, for example, when the limiting dynamics~(\ref{eqn:MVE}) has a unique globally asymptotically stable equilibrium; in this special case,  convergence of $\mu_N$ to its invariant measure does not suffer from the slowing down phenomenon associated with positive $\Lambda$. In fact, Panageas and Vishnoi~\cite{panageas-vishnoi-16} and Panageas et al.~\cite{panageas-etal-16} show that the mixing time is $O(\log N)$ in the discrete-time setting. Kifer~\cite{kifer-90} considers a more restrictive discrete-time model, which does not cover the mean-field model, and identifies the constant analogous to $\Lambda$~\cite[Theorem~4.3]{kifer-90}. The restriction is that the state space of $\mu_N$ is the same for each $N$ and that a certain uniform finite duration large deviation principle should hold with the rate function satisfying a continuity property. One can view our result as an extension of Kifer's~\cite[Theorem~4.3]{kifer-90} to the continuous-time \emph{mean-field} setting, where the state space of the Markov process $\mu_N$ changes with $N$. Hwang and Sheu~\cite{hwang-sheu-90} establish a result similar to ours on the scaling of the second largest eigenvalue of a reversible small noise diffusion process, and our method of proof is inspired by their approach.
\subsubsection{Convergence to a global minimum via controlled addition of particles}
Our third main result is on the convergence of the empirical measure process to a global minimum of a natural `entropy' function when particles are injected over time at a specific rate reminiscent of the simulated annealing algorithm's cooling schedule, $N(t) = \lfloor \frac{\log (2+t)}{c^* + \delta}\rfloor$ for a suitable $c^*$ and any $\delta > 0$. This entropy function is the large deviations rate function associated with the sequence of invariant measures $\{\wp_N, N \geq 1\}$, which is in turn defined in terms of the large deviations rate function associated with the process $\mu_N$; see~\eqref{eqn:s_expression} for its definition. 

Fix $c>0$. Let $N_0 = \min\{n \in \mathbb{N} : \exp\{nc\} - 2 \geq 0 \}$, $t_{N_0} = 0$, and for each $N > N_0$, let $t_N = \exp\{Nc\}-2$. We construct a process with controlled addition of particles as follows. We start with $N_0$ particles with certain initial states and let the process evolve according to the generator $L^{N_0}$ until time $t_{N_0+1}$. For each $N > N_0$, we add an extra particle at time $t_N$, and for a fixed state $z_0 \in \mathcal{Z}$, we set the state of the new particle to $z_0$ and let the process evolve according to the generator $L^{N}$ from $t_{N}$ to $t_{N+1}$ (see a more precise description of the process in Section~\ref{section:conv_global_minimum}). Let $\bar{\mu}$ denote the above time-inhomogeneous Markov process and let $P_{0,\nu}$ denote the law of $\bar{\mu}$ on $D([0,\infty), M_1(\mathcal{Z}))$ with initial condition $\bar{\mu}(0) = \nu$. Also, let $\tilde{L}_0$ denote the set of all global minima of the entropy function (see Section~\ref{subsection:thm-conv-proof} for the precise definition of $\tilde{L}_0$). Our convergence result is the following.
\begin{theorem}
There exists a constant $c^*>0$ such that for all $c > c^*$ and any $\rho_1 > 0$,
\begin{align*}
P_{0,\nu} (\bar{\mu}(t) \in (\text{the } \rho_1\text{-neighbourhood of } \tilde{L}_0)) \to 1
\end{align*}
as $t \to \infty$, uniformly for all $\nu \in M_1^{N_0}(\mathcal{Z})$.
\label{thm:conv-globalmin}
\end{theorem}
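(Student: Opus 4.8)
The plan is to run the classical argument for convergence of simulated annealing (\cite[Chapter~6]{freidlin-wentzell}, Hwang--Sheu \cite{hwang-sheu-90}) for the time-inhomogeneous process $\bar\mu$, whose effective inverse temperature $N(t)=\lfloor\log(2+t)/c\rfloor$ increases along the cooling schedule. Write $U_\rho$ for the $\rho$-neighbourhood of $\tilde L_0$ in $M_1(\mathcal Z)$; it suffices to prove the assertion for small $\rho_1$, the general case following by monotonicity. The starting point is the description of the `entropy' function: the invariant measures $\wp_N$ obey a large deviation principle on $M_1(\mathcal Z)$ with rate function $s$, built from the quasipotential of the McKean--Vlasov flow~\eqref{eqn:MVE} by the $W$-graph minimisation over its $\omega$-limit sets, and $\tilde L_0=\{s=\min s\}$. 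In particular $\wp_N$ concentrates exponentially fast on $\tilde L_0$: for each $\rho>0$ there is $\gamma(\rho)>0$ with $\wp_N(M_1(\mathcal Z)\setminus U_\rho)\le e^{-N\gamma(\rho)}$ for all large $N$. The constant $c^*$ is the critical depth of the landscape of $s$ under~\eqref{eqn:MVE}: the largest, over $\omega$-limit sets $K\not\subseteq\tilde L_0$, of the cost of leaving the cycle of~\eqref{eqn:MVE} containing $K$ and descending to a strictly more stable configuration (the definition that collapses the cycle hierarchy onto $\tilde L_0$). It satisfies $c^*\ge\Lambda$ and, because $\tilde L_0$ is the most stable set, $c^*<\theta(\rho)$ for all sufficiently small $\rho>0$, where $\theta(\rho)>0$ is the cost of reaching $M_1(\mathcal Z)\setminus U_\rho$ starting from $U_{\rho/2}$; this is the mean-field jump analogue of the threshold in \cite[Chapter~6]{freidlin-wentzell}.

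Second, I carry out a level-by-level analysis. On the interval $[t_N,t_{N+1}]$ the process $\bar\mu$ is the time-homogeneous $N$-particle chain $\mu_N$ run for duration $\Delta_N=t_{N+1}-t_N=e^{Nc}(e^{c}-1)$, started at time $t_N$ from a point at L\'evy--Prohorov distance $O(1/N)$ from $\bar\mu(t_N^-)$ (the injected particle at state $z_0$ shifts the empirical measure by $O(1/N)$). Fix $\rho_1>0$ small and put $\delta=\tfrac12\min\{c-c^*,\ \theta(\rho_1)-c^*\}>0$, so that $\Lambda+\delta\le c^*+\delta<\min\{c,\ \theta(\rho_1)\}$. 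Since $c^*\ge\Lambda$, for $N$ large $e^{N(\Lambda+\delta)}\le\Delta_N$, so Theorem~\ref{thm:conv} at level $N$ (with $f=\indf{A}$, and using that its conclusion persists for all larger times because $\wp_N$ is invariant) gives, uniformly over the configuration at $t_N$ and hence over $\nu=\bar\mu(0)$, for every measurable $A\subseteq M_1(\mathcal Z)$,
\[
\big|P_{0,\nu}\big(\bar\mu(t_{N+1}^-)\in A\big)-\wp_N(A)\big|\ \le\ e^{-e^{N\varepsilon}}.
\]
Taking $A=M_1(\mathcal Z)\setminus U_{\rho_1}$ and using the concentration of $\wp_N$,
\[
q_N:=\sup_{\nu\in M_1^{N_0}(\mathcal Z)}P_{0,\nu}\big(\bar\mu(t_{N+1}^-)\notin U_{\rho_1}\big)\ \le\ e^{-N\gamma(\rho_1)}+e^{-e^{N\varepsilon}},
\]
which tends to $0$ as $N\to\infty$; the same bound holds with $U_{\rho_1}$ replaced by any $U_\rho$, $\rho>0$.

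Third, I pass to arbitrary $t\to\infty$. Let $M=M(t)$ be the level with $t\in[t_M,t_{M+1})$, so $M\to\infty$. If $t-t_M\ge e^{M(\Lambda+\delta)}$, Theorem~\ref{thm:conv} at level $M$ gives $P_{0,\nu}(\bar\mu(t)\notin U_{\rho_1})\le e^{-M\gamma(\rho_1)}+e^{-e^{M\varepsilon}}$ exactly as above. If $t-t_M<e^{M(\Lambda+\delta)}$, then with probability at least $1-q_{M-1}$ (for a slightly enlarged radius, to absorb the $O(1/M)$ injection correction) the chain $\mu_M$ on $[t_M,t_{M+1}]$ starts inside $U_{\rho_1/2}$; since $t-t_M<e^{M(\Lambda+\delta)}$ and $\Lambda+\delta<\theta(\rho_1)$, the exit-from-a-domain estimates of Section~\ref{section:large_time_behaviour} give that, conditioned on starting in $U_{\rho_1/2}$, the chain has not left $U_{\rho_1}$ by time $t-t_M$ except with probability $e^{-M\varepsilon'}$. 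Hence $P_{0,\nu}(\bar\mu(t)\notin U_{\rho_1})\le q_{M-1}+e^{-M\varepsilon'}+o(1)$. In either case the bound tends to $0$ uniformly in $\nu\in M_1^{N_0}(\mathcal Z)$ as $t\to\infty$, which is the assertion.

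The main obstacle is the uniformity and robustness of the finite-$N$ estimates used in steps two and three. The large deviation and exit-time estimates of Section~\ref{section:large_time_behaviour} (and the proof of Theorem~\ref{thm:conv}) must hold uniformly over all starting configurations \emph{and} be stable under the $N$-dependence of the state space $M_1^N(\mathcal Z)$ and the particle injected at each $t_N$, so that one coherent family $\{U_\rho\}$ of neighbourhoods of $\tilde L_0$ can be tracked across the sequence of spaces and the $O(1/N)$ jump in the empirical measure at each level absorbed into the radii; one then patches these level-$N$ estimates over infinitely many levels while keeping the cumulative error $\to 0$. A subsidiary, more conceptual, point is to pin down the definition of $c^*$ from the cycle hierarchy of $s$ under~\eqref{eqn:MVE}, to establish $c^*\ge\Lambda$ and $c^*<\theta(\rho)$ for all small $\rho$, and to verify that $c>c^*$ is exactly the condition forcing the hierarchy to collapse onto $\tilde L_0$ --- the mean-field jump counterpart of the analysis in \cite[Chapter~6]{freidlin-wentzell} and \cite{hwang-sheu-90}.
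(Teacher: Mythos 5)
Your proposal takes a genuinely different route from the paper (which never invokes Theorem~\ref{thm:conv}; it descends through the cycle hierarchy $A_m\supset A_{m-1}\supset\cdots\supset A_0=\tilde L_0$ via Lemmas~\ref{lemma:hs22}--\ref{lemma:conv-fw19}), and it contains two gaps that are not repairable as written.

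The first concerns the inequality $c^*\ge\Lambda$, which you need so that the level duration $\Delta_N=e^{Nc}(e^c-1)$ exceeds the mixing time $e^{N(\Lambda+\delta)}$ and Theorem~\ref{thm:conv} can be invoked at every level. This inequality runs the wrong way. Comparing the paper's definitions: $V_{k}=\max\{\tilde V(\pi^{k}):\pi^{k}\subset\pi_0^{k+1},\ \pi^{k}\ne\pi_0^{k}\}$ and $\Lambda=\max_k V_k$, while $c_{k}(\pi_0^{k+1})=\max\{\tilde V(\pi^{k}):\pi^{k}\in\pi_0^{k+1},\ \pi^{k}\notin A_{k}(\pi_0^{k+1})\}$. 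Since $\pi_0^{k}\in A_{k}(\pi_0^{k+1})$, the maximum defining $c_k(\pi_0^{k+1})$ ranges over a \emph{subset} of the one defining $V_k$, so $c_k(\pi_0^{k+1})\le V_k$, and generically (in particular when the cycle hierarchy is uniquely determined at each level) $c^*\le\Lambda$, with strict inequality possible. This is precisely the point of annealing: for $c^*<c<\Lambda$ the process does \emph{not} have time to relax to $\wp_N$ within a level, it only has time to escape progressively deeper non-optimal cycles, which is why the paper replaces the one mixing step by the hitting times $\theta_m,\theta_{m-1},\ldots$ of $[A_m]_{\rho_1}, [A_{m-1}]_{\rho_1},\ldots$, each requiring only $c_k<c$. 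An argument based on full mixing at each level cannot reach the sharp constant $c^*$.

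The second gap is in the interpolation step. You use that the exit cost $\theta(\rho_1)$ from $U_{\rho_1}$ starting from $U_{\rho_1/2}$ exceeds $c^*$ (and $\Lambda+\delta$) for all sufficiently small $\rho_1$. This cannot hold: since the quasipotential $V(K_{i_0},\cdot)$ is continuous and vanishes on $K_{i_0}$, one has $\theta(\rho_1)\le\inf_{\xi\in\partial U_{\rho_1}}V(K_{i_0},\xi)\to 0$ as $\rho_1\to 0$, whereas $c^*>0$ and $\Lambda\ge 0$ are fixed. So for the small $\rho_1$ to which you reduced the theorem at the outset, Lemma~\ref{lemma:hitting_time} does not prevent the chain from leaving $U_{\rho_1}$ within a window of length $e^{M(\Lambda+\delta)}$; the chain will in fact leave and re-enter $U_{\rho_1}$ many times in that window, and controlling the \emph{fraction} of time spent outside brings you back to the equilibrium distribution you were trying to avoid. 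The paper's proof sidesteps this entirely by reducing to a single look-ahead estimate $P_{t,\nu}(\bar\mu(t+t^{(c^*+\delta)/c})\in[\tilde L_0]_{\rho_1})\ge 1-t^{-\varepsilon/c}$ whose look-ahead time is strictly shorter than a level when $c>c^*$.
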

Note that the convergence to a global minimum holds for all starting points. This is of use in situations where a population growth schedule is applied in order to {\em engineer} the mean-field system's movement to a desired equilibrium point, as time $t \to \infty$. One can also use this approach to study numerically the most likely region in which the process $\mu_N$ spends time for large values of $N$, under stationarity. Again, our proof is inspired by the analysis of the simulated annealing algorithm in {\cite[Part III]{hwang-sheu-90}}.

\subsection{Key ingredients for the proofs}
\label{subsection:proofs-note}
The proofs of our main results follow the outlines in \cite{hwang-sheu-90}. However, in order to make them work in our present context (which involves jump Markov processes and the mean-field setting), we need to establish the following properties:
\begin{itemize}
\item a uniform version of the finite-duration large deviation principle for $\{(\mu_N(t), 0 \leq t \leq T), N \geq 1\}$, where the uniformity is over the initial condition;
\item continuity of the cost function associated with movement between points on the simplex $M_1(\mathcal{Z})$;
\item strong Markov property of $\mu_N(\cdot)$.
\end{itemize}
The key insight from this paper is the abstraction of these three properties and their importance in establishing the large time behaviour and metastability properties of mean-field systems. We leverage the results of \cite{borkar-sundaresan-12} to establish the above properties.

We now describe the key ideas in each of the main results.

To prove Theorem~\ref{thm:conv}, one possible approach is to wait long enough for the process $\mu_N$ to hit a neighbourhood of one of the most stable $\omega$-limit set(s) of~(\ref{eqn:MVE}), regardless of the initial condition, and then allow sufficient additional time for the process to mix well. We prove Theorem~\ref{thm:conv} using this idea; we first consider a sequence of passages of $\mu_N$ between neighbourhoods of $\omega$-limit sets of~(\ref{eqn:MVE}) to reach one of the most stable $\omega$-limit set. Each of these passages take place between ``stable" subsets of $\omega$-limit sets called cycles (see Section~\ref{subsection:cycles}). Probability of each of these passages over time intervals of the form $\exp\{N\times \text{constant}\}$ for appropriate constants can be lower bounded, thanks to the uniform large deviation property of $\mu_N$ (see~Theorem~\ref{thm:mixing}). We then tie them up using the strong Markov property of $\mu_N$. These steps yield a lower bound on the transition probability for $\mu_N$ (see~Corollary~\ref{cor:lowerboundtransition}) and Theorem~\ref{thm:conv} follows as a consequence of this. We can also produce an upper bound for probability of these passages for suitable initial conditions if enough time has not lapsed (see~\eqref{eqn:ub_tpm} in Theorem~\ref{thm:mixing}). Theorem~\ref{thm:conv-converse} follows as a consequence of this upper bound.

Theorem~\ref{thm:eval_problem} follows from an application of Theorem~\ref{thm:conv}. We use the spectral expansion of the generator of $\mu_N$, when it is reversible with respect to its invariant measure $\wp_N$, and the large deviation principle for $\{\wp_N, N \geq 1\}$ to prove Theorem~\ref{thm:eval_problem}.

In Theorem~\ref{thm:conv-globalmin}, to bring the process $\mu_N$ to one of the most stable $\omega$-limit set(s) of~(\ref{eqn:MVE}) (i.e., one of the global minima of our entropy function), regardless of the initial condition, we introduce new particles over time in a controlled fashion. Before reaching a global minimum, the system may possibly explore other local minima. Since addition of particles amounts to reduction of ``noise" in the process $\mu_N$, we must make sure that particles are introduced sufficiently slowly over time so that the system does not get trapped in a local minimum. This is achieved by the choice of our particle addition schedule $N(t), t \geq 0$, which is the analogue of the cooling schedule in simulated annealing. The schedule also enables us to apply the uniform large deviation principle over sufficiently long time durations to $\bar{\mu}$ so as to extend the results on large time behaviour used in the proof of Theorem \ref{thm:conv} to the present situation when the number of particles change over time (see Lemma~\ref{lemma:hs22}-\ref{lemma:hs24}). These extensions along with the method to analyse the passages of the system through cycles, the idea used in the proof of Theorem~\ref{thm:conv}, enables us to prove a $1- o(1)$ lower bound on the probability that $\bar{\mu}(t)$ belongs to a neighbourhood of a global minimum of our entropy function as $t \to \infty$, no matter where we start the process.

\subsection{Examples}
\label{section:examples}
The mean-field interacting particle system that we have described can be used to model many interesting phenomena that arise in various domains such as physics, engineering, biology, etc. In this section, we shall describe some applications that are relevant to communication networks and shall point to the related literature that study these applications via mean-field models. Naturally, the examples and the related literature that we have mentioned below are by no means exhaustive.

The first example is load balancing in networks. We describe the simplest model, the power of two choices, studied by Mitzenmacker~\cite{mitzhenmaker-00}. Here, each particle is a single server $M/M/1$ queue, and the state represents the number of customers waiting in the queue. In load balancing, one is interested in routing the incoming customers to an appropriate queue so as to minimise the average delay experienced by a customer. The obvious way to do this is to route the customer to a queue with the least number of waiting customers. But, since there are a large number of queues, polling all of them and finding the ones with the least number of customers is expensive. So a simple alternative is to pick a queue at random and route the incoming customer to that queue, which is studied in~\cite{graham-00}. It turns out that, if we pick two queues at random and route the customer to the least loaded queue between the two (with ties broken uniformly at random), the delay decreases dramatically. This algorithm demonstrates the power of two choices, and the evolution of the state of each queue under this algorithm can be described using the mean-field model which has been used to analyse the delay performance~\cite{mitzhenmaker-00}. For  related problems on load balancing in networks, see Mukhopadhyay~et~al.~\cite{mukhopadhyay-etal-16} who study heterogeneous servers, Aghajani et~al.~\cite{aghajani-etal-17,aghajani-ramanan-19} who study non-Markovian queues, etc., and the references therein. Note that one important difference with our setting is that the state space of a queue is countably infinite in this class of problems. The finite state space model arises in the above settings when the buffers are finite and packets arriving at a fully buffered queue are lost.

Another example arises in the modelling of a wireless local area network (WLAN). Here, each particle is a wireless node trying to access a common medium, and the state of a particle represents the aggressiveness with which a packet transmission is attempted. The nodes interact with each other via the medium access control (MAC) protocol implemented in the system. Whenever a wireless node encounters a collision due to a transmission from another node, it changes its state to a less aggressive one, and whenever it succeeds, it changes its state to a more aggressive one. Therefore, the evolution of the state of a node depends on the empirical measure of the states of all the nodes, as in our mean-field model. This model was first proposed by Bianchi~\cite{bianchi-98} and has proved to be useful in analysing the performance of the MAC protocol; other works that focus on the WLAN application include: Bordenave et al.~\cite{bordenave-etal-12} who studied a two time scale mean-field interacting particle system with a fast varying background process to model partial interference among nodes, Kumar et~al.~\cite{kumar-etal-06} who used the mean-field model to study the performance of WLANs using a fixed-point analysis,  Ramaiyan et~al.~\cite{ramaiyan-etal-08} and Bhattacharya and Kumar~\cite{bhattacharya-kumar-17} who looked at the problem of short term unfairness using the aforementioned fixed-point analysis, etc. Note that our model is a continuous-time modification of the discrete-time models in the above papers. Yet the continuous-time model provides accurate predictions on the discrete-time model; see~{\cite[page~4]{borkar-sundaresan-12}}. Some papers work directly with the continuous-time model; see, for example,~Boorstyn~et~al.~\cite{boorstyn-etal-87}.

Other applications that use the mean-field model include analysis and control of spread of epidemics in networks \cite{benaim-leboudec-08,akhil-etal-19,leonard-90,djehiche-kaj-95}, dynamic routing in circuit-switched networks~\cite{anantharam-91}, scheduling in cellular systems~\cite{manjerkar-etal-14}, game-theoretic modelling and analysis of behaviour of agents in societal networks \cite{reiffers-sundaresan,li-etal-15},  etc.
\subsection{Outline of the paper}
The rest of the paper is organised as follows. In Section~\ref{section:ldp_finite}, we discuss large deviation principles for the empirical measure process $\mu_N$ over a finite time horizon. These play an important role in the study of large time behaviour of $\mu_N$ and the large deviation principle for the invariant measure $\{\wp_N\}_{N\geq 1}$. We then study the large time behaviour of the process $\mu_N$ in Section~\ref{section:large_time_behaviour}, and prove our first main result on the proximity of the law of $\mu_N$ to its invariant measure. In Section~\ref{section:eval_problem}, we study the asymptotics of the second largest eigenvalue of the generator of the process $\mu_N$ in the reversible case. Finally, in Section~\ref{section:conv_global_minimum}, we study the convergence of the empirical measure process to a global minimum of the aforementioned entropy function when particles are injected into the system at a suitable rate.

\section{Preliminaries: Large deviations over finite time durations}
\label{section:ldp_finite}
In this section, we present a large deviation principle for the process $\mu_N$ over finite time durations. This result will be used later to study the large-time behaviour of $\mu_N$ and the rate of convergence of $\mu_N$ to its invariant measure.

Fix $T>0$. We introduce some notations. Let $p_{\nu_N}^{(N)}$ denote the solution to the  $D([0,T], M_1(\mathcal{Z}))$-valued martingale problem for $L^N$, i.e., the law of the empirical measure process $(\mu_N(t), 0 \leq t \leq T)$, and let $p_{\nu_N,T}^{(N)}$ denote the law of the terminal-time empirical measure $\mu_N(T) \in M_1(\mathcal{Z})$, with a deterministic initial condition $ \mu_N(0) = \nu_N $. Let $\mathcal{AC}[0,T]$ denote the space of absolutely continuous $M_1(\mathcal{Z})$-valued paths on $[0,T]$ (in particular they are differentiable for almost all $t \in [0,T]$; see~{\cite[Definition~3.1]{leonard-95}}). Define
\begin{align*}
\tau^*(u) \coloneqq \left\{
\begin{array}{lll}
\infty & \text{ if } u < -1 \\
1 & \text{ if } u = -1 \\
(u+1) \log (u+1) - u& \text{ if } u > -1,
\end{array}
\right.
\end{align*}
which is the Fenchel-Legendre transform of $\tau(u) = e^u-u-1, u \in \mathbb{R}$. Recall the definition of the family of rate matrices $(\Lambda_\xi, \xi \in M_1(\mathcal{Z}))$ from Section~\ref{section:introduction}.
We have the following large deviation principle (LDP) for the sequence $\{p_{\nu_N}^{(N)}\}_{N \geq 1}$ on $D([0,T], M_1(\mathcal{Z}))$ (see~{\cite[Theorem~3.1]{leonard-95}},~{\cite[Theorem~3.2]{borkar-sundaresan-12}}). See~{\cite[Section~1.2]{dembo-zeitouni}} for the definition of LDP and a good rate function.
\begin{theorem}
Suppose that the initial conditions $\nu_N \to \nu$ in $M_1(\mathcal{Z})$. Then the sequence of probability measures $\{p_{\nu_N}^{(N)}, N\geq 1\}$ on the space $D([0,T], M_1(\mathcal{Z}))$ satisfies the LDP with good rate function $S_{[0,T]}(\cdot |\nu)$ defined as follows. If $\mu(0) = \nu$ and $\mu \in \mathcal{AC}[0,T]$, then
\begin{align*}
S_{[0,T]}(\mu|\nu)  & = \int_{[0,T]} \sup_{\alpha \in \mathbb{R}^{|\mathcal{Z}|}} \biggr\{ \sum_{z \in \mathcal{Z}} \alpha(z)(\dot{\mu}_t(z) - \Lambda_{\mu_t}^*\mu_t(z)) \\
& \qquad  - \sum_{(z,z^\prime) \in \mathcal{E}} \tau(\alpha(z^\prime) - \alpha(z)) \lambda_{z,z^\prime}(\mu_t) \mu_t(z) \biggr\} dt,
\end{align*}
and $S_{[0,T]}(\mu | \nu) = +\infty$ otherwise. Moreover, if $S_{[0,T]}(\mu|\nu) < \infty$, then there exists a unique family of rate matrices $L(t) = (l_{z,z^\prime}(t), z,z^\prime \in \mathcal{Z}), 0 \leq t \leq T$, such that $t \mapsto L(t)$ is measurable, $\mu$ is the solution to
\begin{align*}
\dot{\mu}(t) = L(t)^* \mu(t), \, 0 \leq t \leq T, \, \mu(0) = \nu,
\end{align*}
and
\begin{align*}
& S_{[0,T]}(\mu|\nu) =  \int_{[0,T]} \sum_{(z,z^\prime)\in \mathcal{E}} \mu(t)(z) \lambda_{z,z^\prime}(\mu(t)) \tau^*\left( \frac{l_{z,z^\prime}(t)}{\lambda_{z,z^\prime}(\mu(t))} -1 \right) dt,
\end{align*}
where $L(t)^*$ denotes the transpose of $L(t)$, $t \in [0,T]$.
\label{thm:finite-duration-ldp}
\end{theorem}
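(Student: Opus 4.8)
The plan is to obtain the LDP in the path space $D([0,T],M_1(\mathcal{Z}))$ by the standard route: first establish an LDP for the associated empirical-process-of-jumps (or equivalently derive the Legendre-transform form of the rate function directly from the generator $L^N$), and then pass to the variational/minimal-cost form. First I would recall that, by the theory of Feng--Kurtz (or the earlier work of L\'eonard and of Borkar--Sundaresan), the exponential tightness of $\{p_{\nu_N}^{(N)}\}$ on $D([0,T],M_1(\mathcal{Z}))$ follows from the boundedness of the jump rates in assumption~\ref{assm:a2}: each coordinate $\mu_N(\cdot)(z)$ has jumps of size $1/N$ at rate $O(N)$, so the modulus-of-continuity estimates needed for exponential tightness are uniform in $N$ and in the initial condition. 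Next, to identify the rate function, the cleanest approach is via the Cram\'er-type calculation for the nonlinear semigroup: one computes, for each smooth test function $f$ on $M_1(\mathcal{Z})$, the limit $\frac{1}{N}e^{-Nf}L^N e^{Nf}(\xi) \to H(\xi,\nabla f(\xi))$ where, from the displayed form of $L^N$, $H(\xi,p) = \sum_{(z,z')\in\mathcal{E}} \xi(z)\lambda_{z,z'}(\xi)\bigl(e^{p(z')-p(z)}-1\bigr) = \sum_{(z,z')\in\mathcal{E}} \xi(z)\lambda_{z,z'}(\xi)\,\tau(p(z')-p(z)) + (\text{linear drift term})$. Since $\tau$ is exactly the function whose Legendre dual is $\tau^*$, the Legendre transform of $H$ in the $p$-variable yields precisely the integrand $\sup_\alpha\{\sum_z\alpha(z)(\dot\mu_t(z)-\Lambda^*_{\mu_t}\mu_t(z)) - \sum_{(z,z')}\tau(\alpha(z')-\alpha(z))\lambda_{z,z'}(\mu_t)\mu_t(z)\}$, establishing the first formula for $S_{[0,T]}(\mu|\nu)$. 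The Lipschitz and boundedness hypotheses in~\ref{assm:a2} guarantee the convergence $\frac{1}{N}e^{-Nf}L^N e^{Nf}\to H$ is uniform enough to run the Feng--Kurtz machinery, and also that the limit Hamiltonian is continuous, so the usual comparison-principle / projective-limit arguments go through.

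For the second, more explicit representation, I would argue as follows. Fix $\mu$ with $S_{[0,T]}(\mu|\nu)<\infty$. At almost every $t$, the inner supremum over $\alpha$ is a finite-dimensional convex optimisation whose value, by Legendre duality applied edge-by-edge subject to the flow-conservation constraint $\dot\mu_t(z)=\sum_{z'}(\text{flux in})-\sum_{z'}(\text{flux out})$, equals $\inf\{\sum_{(z,z')\in\mathcal{E}}\mu_t(z)\lambda_{z,z'}(\mu_t)\tau^*(l_{z,z'}(t)/\lambda_{z,z'}(\mu_t)-1)\}$, the infimum over all nonnegative flux rates $l_{z,z'}(t)$ consistent with $\dot\mu_t$. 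The boundedness $c\le\lambda_{z,z'}\le C$ ensures the minimising fluxes exist, are measurable in $t$ (measurable selection), and define rate matrices $L(t)$ with $L(t)(z,z')=l_{z,z'}(t)/\mu_t(z)$; then $\mu$ solves $\dot\mu=L(t)^*\mu$ by construction. Uniqueness of the family $L(t)$ comes from strict convexity of $\tau^*$ on its domain, which makes the minimising flux unique at a.e.\ $t$ (on the support of $\mu_t$; off the support the flux is forced to be zero by finiteness of the cost together with $\tau^*(-1)=1$). Substituting the minimiser back gives the second displayed formula.

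The main obstacle I anticipate is not any single estimate but the careful handling of the \emph{degenerate} directions — coordinates $z$ where $\mu_t(z)=0$ on a set of positive measure, or edges along which the flux must vanish — since there the factor $\mu_t(z)\lambda_{z,z'}(\mu_t)$ in front of $\tau^*$ collapses and one must check that finiteness of $S_{[0,T]}(\mu|\nu)$ genuinely forces $\dot\mu_t(z)=0$ and no outgoing flux from $z$, using $\tau^*(u)=\infty$ for $u<-1$ and $\tau^*(-1)=1$. Getting the measurable selection of $t\mapsto L(t)$ and the uniqueness statement right in the presence of these degeneracies is the delicate part; everything else (exponential tightness, the Cram\'er computation, the Legendre duality) is routine given assumptions~\ref{assm:a1}--\ref{assm:a2} and the cited results of L\'eonard~\cite{leonard-95} and Borkar--Sundaresan~\cite{borkar-sundaresan-12}, from which in fact the full statement can essentially be quoted.
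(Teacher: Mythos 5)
Your proposal is correct in its conclusions but follows a genuinely different route from the paper. The paper does not prove Theorem~\ref{thm:finite-duration-ldp} from scratch: it cites L\'eonard and Borkar--Sundaresan and only sketches their argument, which (i) establishes a Sanov-type LDP on $D([0,T],M_1(\mathcal{Z}))$ for a reference \emph{non-interacting} particle system with state-independent rates, via the Dawson--G\"artner extension of Sanov's theorem; (ii) computes the Radon--Nikodym derivative of $p_{\nu_N}^{(N)}$ against the law of that reference system by a Girsanov formula (using that at most one particle jumps at a time) and checks its continuity; and (iii) transfers the LDP by Varadhan's integral lemma. You instead propose the Feng--Kurtz / nonlinear-semigroup route: exponential tightness from the bounded jump rates, identification of the limiting Hamiltonian $H(\xi,p)=\sum_{(z,z')\in\mathcal{E}}\xi(z)\lambda_{z,z'}(\xi)\bigl(e^{p(z')-p(z)}-1\bigr)$ from the scaling limit $\tfrac{1}{N}e^{-Nf}L^{N}e^{Nf}$, and a Legendre transform to recover the variational form of $S_{[0,T]}$. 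Your Hamiltonian computation is correct --- splitting $e^{u}-1=\tau(u)+u$ does produce exactly the drift term $\Lambda^{*}_{\mu_t}\mu_t$ plus the $\tau$-cost --- and your plan for the second (edge-flux) representation via edge-by-edge Legendre duality, measurable selection, and the use of $\tau^{*}(-1)=1$ and $\tau^{*}(u)=\infty$ for $u<-1$ to handle degenerate coordinates is sound and correctly locates where the delicacy lies. The trade-off is this: the Girsanov/Sanov route leverages a known path-space LDP for i.i.d.\ trajectories and only requires checking continuity of a density functional, while the Feng--Kurtz route is more intrinsic and generalises more readily, but you should not wave off the comparison principle as routine --- verifying the comparison principle for the limiting Hamilton--Jacobi equation near the boundary of the simplex $M_1(\mathcal{Z})$ is the genuinely hard step of that machinery, and the uniform bounds $c\leq\lambda_{z,z'}\leq C$ from assumption~\ref{assm:a2} are precisely what make it tractable. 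Since the paper ultimately quotes the result from the literature, either route is an acceptable justification, though yours requires more technical work than the one-paragraph sketch the paper provides.
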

We can interpret the rate function $S_{[0,T]}$ as follows. Starting at $\nu_N$, the process $\mu_N$ is likely to be in the neighbourhood of the solution to the McKean-Vlasov equation~(\ref{eqn:MVE}) with initial condition $\nu$ (with high probability). In order for the process $\mu_N$ to be in the neighbourhood of some other path, we need to apply a control given by the rate matrix $L$; $S_{[0,T]}(\mu|\nu)$ is the cost of this control. In particular, since the solution to the McKean-Vlasov equation starting at $\nu$ has zero-cost (i.e. $S_{[0,T]}(\mu_{\nu}|\nu) = 0$ where $\mu_\nu$ denotes the solution to~(\ref{eqn:MVE}) starting at $\nu$), the limiting behaviour that $\mu_N(\cdot) \xrightarrow{P} \mu_\nu(\cdot)$ in $D([0,T],M_1(\mathcal{Z}))$ as $N \to  \infty$ follows.

Here is an outline of the proof of Theorem~\ref{thm:finite-duration-ldp}: one looks at a system of non-interacting particles where the transition rates of a particle do not depend on the  empirical measure, and considers the corresponding  empirical measure process over $[0,T]$. Since at most one particle can jump at a given point of time, the measure $p_{\nu_N}^{(N)}$ is absolutely continuous with the measure corresponding to the above non-interacting system on $D([0,T],M_1(\mathcal{Z}))$. One can then write the Radon-Nikodym derivative using the Girsanov formula and show continuity properties of the same. An application of an extension of Sanov's theorem (see~{\cite[Theorem~3.5]{dawson-gartner-87}}) tells us that the non-interacting particle system obeys the LDP on $D([0,T],M_1(\mathcal{Z}))$. The above theorem then follows by an application of Varadhan's integral lemma (see~{\cite[Theorem~4.3.1]{dembo-zeitouni}}). This approach has been carried out for a system of interacting diffusions in~\cite{dawson-gartner-87} and for jump processes in~\cite{leonard-95,borkar-sundaresan-12}. One can also prove various special cases of Theorem~\ref{thm:finite-duration-ldp} via other simpler methods; for example, for fixed initial conditions, i.e., when $\nu_N = \delta_{z}$ for some $z \in \mathcal{Z}$ and for all $N \geq 1$, one can use a modification of Varadhan's lemma to obtain the LDP for $p_{\delta_{z}}^{(N)}$ (see~\cite{delmoral-zajic-03}), but letting the initial condition to be arbitrary, except for the constraint $\nu_N \to \nu$ weakly, is crucial to obtain a uniform version of the Theorem~\ref{thm:finite-duration-ldp} (see Corollary~\ref{cor:uniform_ldp}), which is used prove our main results.

We now recall a theorem that gives the large deviation principle for the sequence $\{p_{\nu_N,T}^{(N)}\}_{N \geq 1}$ on $M_1(\mathcal{Z})$. This can be obtained from the above theorem by an application of the contraction principle to the coordinate projection map $D([0,T], M_1(\mathcal{Z})) \ni \mu \mapsto \mu(T)$ (see~\cite[Theorem~4.2.1]{dembo-zeitouni}, \cite[Theorem~3.3]{borkar-sundaresan-12}).
\begin{theorem}
Suppose that the initial conditions $\nu_N \to \nu$ in $M_1(\mathcal{Z})$.  Then the sequence of probability measures $\{p_{\nu_N,T}^{(N)}\}_{N \geq 1}$ on the space $M_1(\mathcal{Z})$ satisfies the LDP with the good rate function
\begin{align*}
S_T(\xi|\nu) \coloneqq \inf \{S_{[0,T]}(\mu|\nu): &\,  \mu(0) = \nu, \mu(T) = \xi,  \mu \in \mathcal{AC}[0,T]\}.
\end{align*}
Moreover, the above infimum is attained, i.e., there exists a path $\hat{\mu} \in \mathcal{AC}[0,T] $ such that $\hat{\mu} (0)=\nu,\,  \hat{\mu} (T) = \xi$ and $S_{[0,T]}(\hat{\mu}|\nu) = S_T(\xi|\nu)$.
\label{thm:ldp_terminal_time}
\end{theorem}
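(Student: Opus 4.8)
The plan is to deduce this theorem from Theorem~\ref{thm:finite-duration-ldp} by the contraction principle applied to the evaluation map $\pi_T : D([0,T], M_1(\mathcal{Z})) \to M_1(\mathcal{Z})$, $\pi_T(\mu) = \mu(T)$. The first thing I would check is that $\pi_T$ is continuous on $D([0,T],M_1(\mathcal{Z}))$ at least on a set carrying all the mass of the rate function. In general the projection at a fixed time $t=T$ is \emph{not} continuous in the Skorohod-$J_1$ topology; however, here the convention stated in the excerpt is that all paths are left-continuous at $T$, and the rate function $S_{[0,T]}(\cdot|\nu)$ is finite only on $\mathcal{AC}[0,T]$, where paths are (in particular) continuous. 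Since continuity of $\pi_T$ at every point of $\{S_{[0,T]}(\cdot|\nu) < \infty\}$ together with a goodness/exponential-tightness argument is enough to run the contraction principle (see~\cite[Theorem~4.2.1]{dembo-zeitouni}), this gives the LDP for $\{p^{(N)}_{\nu_N,T}\}$ with rate function $\xi \mapsto \inf\{S_{[0,T]}(\mu|\nu) : \mu(0)=\nu,\ \mu(T)=\xi\}$, which is exactly $S_T(\xi|\nu)$ as defined.

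Next I would record that $S_T(\cdot|\nu)$ is a good rate function: its level sets are closed (lower semicontinuity is inherited from $S_{[0,T]}$ under the continuous map $\pi_T$) and, since $M_1(\mathcal{Z})$ is compact, they are automatically compact. So the only substantive point left is the attainment of the infimum in the definition of $S_T(\xi|\nu)$, i.e.\ the existence of an optimal path $\hat\mu$ with an associated measurable rate matrix $L(\cdot)$ solving $\dot{\hat\mu}(t) = L^*(t)\hat\mu(t)$, $\hat\mu(0)=\nu$, $\hat\mu(T)=\xi$, and $S_{[0,T]}(\hat\mu|\nu) = S_T(\xi|\nu)$. When $S_T(\xi|\nu) = +\infty$ there is nothing to prove (any path works, or the statement is vacuous depending on convention), so assume $S_T(\xi|\nu) < \infty$ and take a minimizing sequence $\mu^{(k)} \in \mathcal{AC}[0,T]$ with $\mu^{(k)}(0) = \nu$, $\mu^{(k)}(T) = \xi$, and $S_{[0,T]}(\mu^{(k)}|\nu) \to S_T(\xi|\nu)$. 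The associated rate matrices $L^{(k)}(\cdot)$ (guaranteed by the ``moreover'' part of Theorem~\ref{thm:finite-duration-ldp}) have entries $l^{(k)}_{z,z'}(t) \geq 0$; bounding $S_{[0,T]}(\mu^{(k)}|\nu)$ using the explicit $\tau^*$-representation and the uniform bounds $c \leq \lambda_{z,z'} \leq C$ from~\ref{assm:a2}, together with superlinearity of $\tau^*$, gives uniform integrability of $\{l^{(k)}_{z,z'}(\cdot)\}_k$ in $L^1([0,T])$. Passing to a subsequence, $l^{(k)}_{z,z'} \rightharpoonup l_{z,z'}$ weakly in $L^1$; define $L(t)$ from these limits. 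One then checks that the corresponding ODE solution $\hat\mu$ satisfies $\hat\mu(0)=\nu$, $\hat\mu(T)=\xi$ (the endpoint constraint passes to the limit because $\mu^{(k)}(t) = \nu + \int_0^t (L^{(k)}(s))^*\mu^{(k)}(s)\,ds$ and one can pass to the limit in this integral equation using the weak-$L^1$ convergence and uniform convergence of $\mu^{(k)}$), and that $S_{[0,T]}(\hat\mu|\nu) \leq \liminf_k S_{[0,T]}(\mu^{(k)}|\nu) = S_T(\xi|\nu)$ by joint convexity and lower semicontinuity of $(\mu,l)\mapsto \int \mu(z)\lambda_{z,z'}(\mu)\tau^*(l/\lambda_{z,z'}(\mu)-1)$ under the relevant convergences (an application of, e.g., a standard lower-semicontinuity theorem for integral functionals with convex integrand, or equivalently of the variational/duality representation of $S_{[0,T]}$ in the first display of Theorem~\ref{thm:finite-duration-ldp}, which exhibits it as a supremum of affine-in-$(\mu,\dot\mu)$ functionals). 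Since $\hat\mu$ is admissible, the reverse inequality $S_{[0,T]}(\hat\mu|\nu) \geq S_T(\xi|\nu)$ is immediate, giving equality.

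The main obstacle I anticipate is the compactness/lower-semicontinuity argument for the attainment of the infimum: one has to extract a limit from the minimizing sequence in a topology strong enough to pass the endpoint constraint $\mu(T)=\xi$ and weak enough to have compactness, and then verify lower semicontinuity of the nonlinear cost functional along that convergence — the subtlety being that $\lambda_{z,z'}$ depends on $\mu_t$, so the integrand is not jointly convex in $(\mu,l)$ in an obvious way, and one must exploit continuity of $\lambda_{z,z'}$ (Lipschitz by~\ref{assm:a2}) together with the uniform convergence $\mu^{(k)}\to\hat\mu$ to reduce to the convex-in-$l$ situation. The verification that $\pi_T$ is continuous on $\mathcal{AC}[0,T]$ in the $J_1$ topology is routine given the left-continuity-at-$T$ convention, and the goodness of $S_T(\cdot|\nu)$ is free from compactness of $M_1(\mathcal{Z})$.
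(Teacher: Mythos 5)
Your proposal follows the same route as the paper, which simply invokes the contraction principle for the evaluation map $\mu\mapsto\mu(T)$ and cites \cite[Theorem~4.2.1]{dembo-zeitouni} and \cite[Theorem~3.3]{borkar-sundaresan-12}. Two remarks on the details you supply. First, the worry about $\pi_T$ failing to be continuous is misplaced: on $D([0,T],M_1(\mathcal{Z}))$ the admissible Skorohod time changes fix both endpoints, so $x_n\to x$ forces $x_n(T)\to x(T)$; evaluation at the terminal time (and at $0$) is continuous on all of $D([0,T])$, not merely on $\mathcal{AC}[0,T]$, and the ordinary contraction principle applies directly. Second, the minimizing-sequence/weak-$L^1$-compactness/lower-semicontinuity argument you use to get attainment is more work than is needed and essentially re-proves material that underlies Theorem~\ref{thm:finite-duration-ldp}: since $S_{[0,T]}(\cdot\,|\nu)$ is a \emph{good} rate function, for any $M>S_T(\xi|\nu)$ the set $\{\mu:\mu(0)=\nu,\ \mu(T)=\xi,\ S_{[0,T]}(\mu|\nu)\le M\}$ is a nonempty closed subset of a compact sublevel set (closedness uses continuity of $\pi_0,\pi_T$), hence compact, and the lower-semicontinuous $S_{[0,T]}(\cdot|\nu)$ attains its infimum there; the existence and representation of the optimal rate matrix $L(\cdot)$ for the minimizer $\hat\mu$ are then exactly the ``Moreover'' clause of Theorem~\ref{thm:finite-duration-ldp} applied to $\hat\mu$. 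Your explicit lower-semicontinuity discussion (uniform convergence of $\mu^{(k)}$ plus convexity in $l$ at fixed $\mu$) is correct, but it belongs to the proof of Theorem~\ref{thm:finite-duration-ldp} rather than to the deduction of this corollary from it.
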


Here, $S_T(\xi|\nu)$ can be interpreted as the minimum cost of passage from the profile $\nu$ to the profile $\xi$ in time $T$, among all paths from $\nu$ to $\xi$ in time $T$. It can be shown that $S_T$ is continuous on $M_1(\mathcal{Z}) \times M_1(\mathcal{Z})$ by constructing piecewise constant velocity trajectories between points on $M_1(\mathcal{Z})$ (see~\cite[Lemma~3.3]{borkar-sundaresan-12}).

We also have the following \emph{uniform} LDP for the sequence $\{p_{\nu_N}^{(N)}\}_{ N \geq 1}$ (see~{\cite[Corollary~3.1]{borkar-sundaresan-12}}) when the initial condition is allowed to lie in a compact set.
\begin{corollary}
For any compact set $K \subset M_1(\mathcal{Z})$, any closed set $F \subset D([0,T], M_1(\mathcal{Z}))$, and any open set $G \subset D([0,T], M_1(\mathcal{Z}))$, we have
\begin{align}
\limsup_{N \to \infty} \frac{1}{N} \log \sup_{\nu \in K \cap M_1^N(\mathcal{Z})} p_\nu^{(N)} \{\mu_N \in F\} \leq - \inf_{\nu \in K } \inf_{\mu \in F} S_{[0,T]} (\mu|\nu),
\end{align}
and
\begin{align}
\liminf_{N \to \infty} \frac{1}{N} \log \inf_{\nu \in K \cap M_1^N(\mathcal{Z})} p_\nu^{(N)} \{\mu_N \in G\} \geq - \sup_{\nu \in K} \inf_{\mu \in G} S_{[0,T]} (\mu|\nu).
\end{align}
\label{cor:uniform_ldp}
\end{corollary}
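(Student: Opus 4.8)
The plan is to obtain both bounds from Theorem~\ref{thm:finite-duration-ldp} by a soft compactness argument, the only extra ingredient being that $\bigcup_N M_1^N(\mathcal{Z})$ is dense in $M_1(\mathcal{Z})$ (each $M_1^N(\mathcal{Z})$ is an $O(1/N)$-net). No quantitative refinement of the finite-duration LDP is required.

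For the upper bound I would argue by contradiction. Since $K \cap M_1^N(\mathcal{Z})$ is a finite set, the supremum over $\nu \in K \cap M_1^N(\mathcal{Z})$ is attained at some admissible $\nu_N^\star$ (if the intersection is empty the inequality is vacuous). Were the claimed inequality false, there would be $\eta > 0$ and a subsequence $(N_k)$ along which $\tfrac{1}{N_k}\log p_{\nu_{N_k}^\star}^{(N_k)}\{\mu_N \in F\} \geq -\inf_{\nu \in K}\inf_{\mu \in F} S_{[0,T]}(\mu|\nu) + \eta$. By compactness of $K$ one passes to a further subsequence so that $\nu_{N_k}^\star \to \nu^\star \in K$, and then extends $(\nu_{N_k}^\star)$ to a full sequence $(\tilde\nu_N)_{N\geq 1}$ with $\tilde\nu_N \in M_1^N(\mathcal{Z})$ and $\tilde\nu_N \to \nu^\star$ (possible by density of the grids). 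Theorem~\ref{thm:finite-duration-ldp} applied to $(\tilde\nu_N)$ and the closed set $F$ gives $\limsup_N \tfrac{1}{N}\log p_{\tilde\nu_N}^{(N)}\{\mu_N \in F\} \leq -\inf_{\mu \in F} S_{[0,T]}(\mu|\nu^\star) \leq -\inf_{\nu \in K}\inf_{\mu \in F} S_{[0,T]}(\mu|\nu)$, which contradicts the previous display along $(N_k)$. Note this uses only $\nu^\star \in K$, so no lower semicontinuity of $S_{[0,T]}$ in its second argument is needed.

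The lower bound is handled identically, with infima replacing suprema: $\inf_{\nu \in K \cap M_1^N(\mathcal{Z})} p_\nu^{(N)}\{\mu_N \in G\}$ is attained at some admissible $\nu_N^\star$; if the inequality failed one could extract a subsequence along which $\tfrac{1}{N_k}\log p_{\nu_{N_k}^\star}^{(N_k)}\{\mu_N \in G\} \leq -\sup_{\nu \in K}\inf_{\mu \in G}S_{[0,T]}(\mu|\nu) - \eta$ with $\nu_{N_k}^\star \to \nu^\star \in K$, extend to a full sequence $\tilde\nu_N \to \nu^\star$, and apply the large deviation lower bound of Theorem~\ref{thm:finite-duration-ldp} to the open set $G$ to get $\liminf_N \tfrac{1}{N}\log p_{\tilde\nu_N}^{(N)}\{\mu_N \in G\} \geq -\inf_{\mu \in G}S_{[0,T]}(\mu|\nu^\star) \geq -\sup_{\nu \in K}\inf_{\mu \in G}S_{[0,T]}(\mu|\nu)$, a contradiction.

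I do not expect a genuine obstacle: this corollary is a topological consequence of Theorem~\ref{thm:finite-duration-ldp}, the substance being the finite-duration LDP for \emph{arbitrary} convergent sequences of initial conditions. The only points needing care are the attainment of the sup/inf over $K \cap M_1^N(\mathcal{Z})$ (immediate from finiteness) and the extension of a convergent subsequence of admissible initial conditions to a convergent full sequence of admissible initial conditions (immediate from density of the grids). Since $M_1(\mathcal{Z})$ is itself compact, taking $K = M_1(\mathcal{Z})$ then yields uniformity over all initial conditions.
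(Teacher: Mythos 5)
Your proposal is correct and follows exactly the compactness/subsequence reduction underlying the reference the paper defers to (\cite[Corollary~5.6.15]{dembo-zeitouni}): extract a convergent subsequence of maximisers/minimisers from the finite grids $K\cap M_1^N(\mathcal{Z})$, extend it to a full admissible sequence via density of the grids, and apply Theorem~\ref{thm:finite-duration-ldp} to reach a contradiction. Your remark that no semicontinuity of $\nu\mapsto\inf_{\mu}S_{[0,T]}(\mu|\nu)$ is needed (only that the limit point $\nu^\star$ lies in $K$) is also accurate.
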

For a proof of the above, see~{\cite[Corollary~5.6.15]{dembo-zeitouni}}. Note that, since the space $M_1(\mathcal{Z})$ is compact, we may take $K = M_1(\mathcal{Z})$ in the above corollary.

\begin{remark}The version of uniform LDP presented in Corollary~\ref{cor:uniform_ldp} is slightly different from the definition of uniform LDP in Freidlin and Wentzell~\cite[Section~3, Chapter~3]{freidlin-wentzell}. The version presented here suffices for proofs our main results since our state space $M_1(\mathcal{Z})$ is compact and the rate function $S_T$ defined in Theorem~\ref{thm:ldp_terminal_time} is continuous (see~\cite[Theorem~2.7]{salins-19} and~\cite[Appendix~A]{borkar-sundaresan-12}).
\end{remark}
\section{Large time behaviour}
\label{section:large_time_behaviour}
In the study of large-time behaviour of $\mu_N$, an important role is played by the Freidlin-Wentzell quasipotential  $V : M_1(\mathcal{Z}) \times M_1(\mathcal{Z}) \to [0, \infty)$ defined by
\begin{align*}
V(\nu,\xi) \coloneqq \inf \{S_{[0,T]}(\mu|\nu):\mu(T)= \xi, T > 0\},
\end{align*}
i.e., $V(\nu, \xi)$ denotes the minimum cost of transport from $\nu$ to $\xi$ in an arbitrary but finite time.

We say that $\nu \sim \xi$ ($\nu$ is equivalent to $\xi$) if $V(\nu, \xi) = 0$ and $V(\xi, \nu) = 0$. It is easy to see that $\sim$ defines an equivalence relation on $M_1(\mathcal{Z})$. To study the large time behaviour of the process $\mu_N$, we make the following assumptions on the McKean-Vlasov equation~(\ref{eqn:MVE}) (see~\cite[Chapter~6,~Section~2,~Condition~A]{freidlin-wentzell}):
\begin{enumerate}[label=({B\arabic*})]
\item There exists a finite number of compact sets $K_1, K_2, \ldots, K_l$ such that
\begin{itemize}
\item For each $i = 1,2, \ldots l$,  $\nu_1, \nu_2 \in K_i$ implies $\nu_1 \sim \nu_2$.
\item For each $i \neq j$, $\nu_1 \in K_i$ and $\nu_2 \in K_j$ implies $\nu_1 \nsim \nu_2$.
\item Every $\omega$-limit set of the dynamical system~(\ref{eqn:MVE}) lies completely in one of the compact sets $K_i$.
\end{itemize}
\label{assm:b1}
\end{enumerate}
Since $V(\nu_1, \nu_2) = 0$ whenever $\nu_1,\nu_2 \in K_i$ for any $1 \leq i \leq l$, we can define
\begin{align*}
V(K_i, K_j) \coloneqq \inf\{S_{[0,T]}(\mu|\nu):\nu \in K_i, \mu(T) \in K_j, T > 0\},
\end{align*}
which is interpreted as the minimum cost of going from $K_i$ to $K_j$. We also define the minimum cost of going from $K_i$ to $K_j$ without touching the other compact sets $K_k, k \neq i,j$ by
\begin{align*}
\tilde{V}(K_i, K_j) & \coloneqq \inf  \{S_{[0,T]}(\mu|\nu):\nu \in K_i, \mu(t) \notin \cup_{k\neq i, j} K_k  \\
& \qquad \text{ for all } 0 \leq t \leq T, \mu(T) \in K_j, T>0\}.
\end{align*}

\subsection{Preliminary results}
It turns out that, under assumption~\ref{assm:b1}, the large time behaviour of the process $\mu_N$ can be studied via a discrete time Markov chain whose state space is the union of small neighbourhoods of the compact sets $K_i, 1 \leq i \leq l$. To study this chain, we introduce some notation. Let $L = \{1,2,\ldots, l\}$. Given $0 < \rho_1 < \rho_0$, let $\gamma_i$ (resp.~$\Gamma_i$) denote the $\rho_1$-open neighbourhood (resp.~$\rho_0$-open neighbourhood) of $K_i$.  Let $\gamma = \cup_{i=1}^l \gamma_i$,  $\Gamma = \cup_{i=1}^l \Gamma_i$, and $C =M_1(\mathcal{Z}) \setminus \overline{\Gamma}$. For a set $A \subset M_1(\mathcal{Z})$ and $\delta>0$, let $[A]_\delta$ denote the $\delta$-open neighbourhood of $A$, and for a subset $W \subset L$, abusing notation, let $[W]_\delta$ denote the $\delta$-open neighbourhood of $\cup_{i \in W} K_i$. For each $n \geq 1$, we define the sequence of stopping times: $\tau_0\coloneqq 0$, $\sigma_n \coloneqq \inf\{t>\tau_{n-1}: \mu_N(t) \in C\}$, $\tau_n \coloneqq \inf\{t > \sigma_n : \mu_N(t) \in \gamma\}$, and define $Z^N_n \coloneqq \mu_N(\tau_n)$.  Since $\mu_N$ is strong Markov, $Z^N$ is a discrete time Markov chain, and $Z_n^N \in \gamma \cap M^N_1(\mathcal{Z})$ for all $n \geq 1$. For a measurable set $A \in M_1(\mathcal{Z})$, we define the stopping time $\tau_A \coloneqq \inf\{t > 0: \mu_N(t) \notin A\}$, which denotes the time exit from the set $A$. Finally, for a subset $W \subset L$, we define the stopping time $\hat{\tau}_W \coloneqq \inf\{t >0 : \mu_N(t) \in \cup_{i \in W}\gamma_i\}$, and $\bar{\tau}_W \coloneqq \inf \{t > 0: \mu_N(t) \in \cup_{i \in L \setminus W} \gamma_i \}$, which denote the time of entry into the $\rho_1$-neighbourhood of $W$ and the time of entry into the $\rho_1$-neighbourhood of $L \setminus W$, respectively.

We now state some results on the behaviour of the exit time from certain sets, which will be used in the paper subsequently. These results are known in the case of both Markov jump processes as well as diffusion processes; see~{\cite[Appendix]{borkar-sundaresan-12}}, and~{\cite[Chapter~6,~Section~2]{freidlin-wentzell}}. The main ingredients that are used in proving these results are (i) the strong Markov property of the $\mu_N$ process, (ii) Theorem~\ref{thm:finite-duration-ldp} and Corollary~\ref{cor:uniform_ldp} on the LDP for finite time durations, and (iii) the joint continuity of the terminal time rate function $S_T(\cdot|\cdot)$ (see~{\cite[Lemma~3.3]{borkar-sundaresan-12}}). Recall that $P_\nu$ denotes the law of $(\mu_N(t), t \geq 0)$ with initial condition $\mu_N(0) = \nu$ and $E_\nu$ denotes the corresponding expectation.

\begin{lemma}[{\cite[Lemma A.3]{borkar-sundaresan-12}}]
Let $K \subset M_1(\mathcal{Z})$  be a compact set such that all points in $K$ are equivalent to each other. Then, given $\varepsilon> 0$, there exist $\delta > 0$ and $N_0 \geq 1$ such that for all $N \geq N_0$ and $\nu \in  [K]_{\delta}\cap M^N_1(\mathcal{Z})$,
\begin{align*}
E_\nu \tau_{[K]_{\delta}} \leq \exp\{N\varepsilon\}.
\end{align*}
\label{lemma:fw17}
\end{lemma}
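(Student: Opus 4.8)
The plan is to prove the bound $E_\nu \tau_{[K]_\delta} \leq \exp\{N\varepsilon\}$ by a standard renewal-type argument, bootstrapping from a single-step estimate: starting anywhere in a slightly larger neighbourhood of $K$, within a fixed time $T$ the process either exits $[K]_\delta$ or returns close to $K$, and the probability of exiting is at most $\exp\{-cN\}$ for a suitable $c$ that can be made larger than any prescribed multiple of $\varepsilon$. First I would fix a reference time $T>0$ and, using the continuity and positivity of the finite-duration rate function $S_{[0,T]}$ together with the assumption that all points of $K$ are equivalent (so any trajectory starting near $K$ that is to leave $[K]_\delta$ must incur strictly positive cost), produce a constant $a>0$ such that $\inf_{\nu \in [K]_{\delta'}}\inf\{S_{[0,T]}(\mu|\nu): \mu(t)\notin[K]_{\delta}\text{ for some }t\le T\} \geq a$ for a suitably chosen inner radius $\delta'<\delta$; here one shrinks $\delta$ (and then $\delta'$) so that $a$ exceeds $2\varepsilon$, say. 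The McKean–Vlasov flow from any point of $[K]_{\delta'}$ stays near $K$ (hence inside $[K]_\delta$) because $K$ contains the relevant $\omega$-limit behaviour and the flow has zero cost, which is why the infimum above is positive rather than zero.

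Next I would invoke the uniform large deviation upper bound, Corollary~\ref{cor:uniform_ldp}, with $K=M_1(\mathcal{Z})$ as the compact set of initial conditions (restricted to $[K]_{\delta'}$) and with the closed set $F = \{\mu \in D([0,T],M_1(\mathcal{Z})): \mu(t)\notin [K]_\delta \text{ for some } t\in[0,T]\}$, to conclude that there is $N_0$ such that for all $N\ge N_0$,
\begin{align*}
\sup_{\nu \in [K]_{\delta'}\cap M_1^N(\mathcal{Z})} P_\nu\{\tau_{[K]_\delta} \le T\} \le \exp\{-N\varepsilon\}.
\end{align*}
One also needs the complementary fact that, after time $T$, on the event that the process has not exited, it has with probability bounded below (uniformly, for large $N$) returned to the inner neighbourhood $[K]_{\delta'}$ — this follows from the law of large numbers / zero-cost property of the McKean–Vlasov flow, which contracts toward $K$, so that $\inf_{\nu\in[K]_\delta\cap M_1^N(\mathcal{Z})} P_\nu\{\mu_N(T)\in[K]_{\delta'}\} \ge \tfrac12$ for $N$ large (using Corollary~\ref{cor:uniform_ldp}'s lower bound, or directly the weak convergence to the ODE solution together with its asymptotic stability on $K$). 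Actually, for the expectation bound it is cleaner to avoid requiring a return to $[K]_{\delta'}$ and instead iterate the one-step estimate over the whole of $[K]_\delta$: set $q := \sup_{\nu\in[K]_\delta\cap M_1^N(\mathcal{Z})} P_\nu\{\tau_{[K]_\delta}\le T\}$, and observe by the Markov property at times $T, 2T, 3T,\dots$ that $P_\nu\{\tau_{[K]_\delta} > kT\} \ge (1-q)^k$ is the wrong direction; rather $P_\nu\{\tau_{[K]_\delta}>kT\}\le$ nothing useful directly — so one does need the two-sided structure. The correct iteration: partition according to whether, at each time step $T$, the process is back in $[K]_{\delta'}$; from $[K]_{\delta'}$ the exit probability in the next $T$ units is $\le\exp\{-N\varepsilon\}$, and from anywhere in $[K]_\delta$ the probability of being back in $[K]_{\delta'}$ after $T$ units (without exiting) is $\ge\tfrac12$, so a geometric-trials argument gives $P_\nu\{\tau_{[K]_\delta}>kT\}$ bounded appropriately and hence $E_\nu\tau_{[K]_\delta}\le \frac{C T}{\text{(lower bound on per-block exit/return probability)}}$, which is $\le \exp\{N\varepsilon\}$ for $N$ large after absorbing constants.

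Concretely, the final assembly is: let $\beta := \exp\{-N\varepsilon'\}$ denote the uniform-over-$[K]_{\delta'}$ exit probability in time $T$ with $\varepsilon' $ chosen between $\tfrac12\varepsilon$ and $\varepsilon$, and let $p_0\ge\tfrac12$ be the uniform-over-$[K]_\delta$ probability of landing in $[K]_{\delta'}$ at time $T$ having not yet exited; then for the process observed at multiples of $T$, the number of $T$-blocks before exit is stochastically dominated by a geometric-type random variable with success parameter on the order of $\beta p_0$, giving $E_\nu \tau_{[K]_\delta} \le T/(\beta p_0) \cdot (\text{const}) \le \exp\{N\varepsilon\}$ for $N\ge N_0$ after enlarging $N_0$. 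I expect the main obstacle to be the first step — nailing down the positive lower bound $a$ on the cost of exiting $[K]_\delta$ uniformly over starting points in $[K]_{\delta'}$, and arranging the radii so that $a$ beats $\varepsilon$. This uses in an essential way the equivalence of all points of $K$ (so that there is no zero-cost escape route out of a neighbourhood of $K$, by a compactness/continuity argument on $S_{[0,T]}$ of the type that yields joint continuity of $S_T$), and it is the place where assumption~\ref{assm:b1} and the structure of the quasipotential really enter; the rest is bookkeeping with the strong Markov property and Corollary~\ref{cor:uniform_ldp}.
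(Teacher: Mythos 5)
Your argument has a fundamental directional error that runs through the whole proposal, so it would not prove the lemma.

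The lemma asserts an \emph{upper} bound on the expected exit time: $E_\nu \tau_{[K]_\delta} \leq \exp\{N\varepsilon\}$, and crucially $\delta$ is ours to choose. The correct mechanism is that, because all points of $K$ are equivalent (so the quasipotential is identically zero on $K$), the cost of reaching the boundary of $[K]_\delta$ from anywhere in $[K]_\delta$ \emph{tends to zero} as $\delta \to 0$. One then applies the uniform LDP \emph{lower} bound to get a lower bound on the single-block exit probability, $\inf_{\nu \in [K]_\delta \cap M_1^N(\mathcal{Z})} P_\nu\{\tau_{[K]_\delta} \leq T\} \geq \exp\{-N\varepsilon'\}$ with $\varepsilon' < \varepsilon$, and iterates by the Markov property to get $E_\nu \tau_{[K]_\delta} \leq T \exp\{N\varepsilon'\} \leq \exp\{N\varepsilon\}$. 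Upper bounds on expected exit times always come from \emph{lower} bounds on exit probabilities.

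You do the opposite on both counts. First, you try to arrange a lower bound $a > 2\varepsilon$ on the exit cost "by shrinking $\delta$": but shrinking $\delta$ moves the boundary closer to $K$ and therefore \emph{decreases} the exit cost toward zero, so no choice of $\delta$ makes $a$ exceed a prescribed threshold — the monotonicity is backwards. Second, you then invoke the LDP upper bound to conclude $\sup_{\nu} P_\nu\{\tau_{[K]_\delta} \leq T\} \leq \exp\{-N\varepsilon\}$; an upper bound on the exit probability only shows the exit time is likely to be \emph{long}, which is a lower bound on $E_\nu \tau_{[K]_\delta}$, not what is claimed. Finally, in the "final assembly" you assert the number of $T$-blocks before exit is stochastically dominated by a geometric with success probability $\beta p_0$; stochastic domination from above by a geometric requires the per-block exit probability to be bounded \emph{below} by $\beta p_0$, but everything you established bounds it from \emph{above}. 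The quantity you actually need — a lower bound on the exit probability, obtained by exhibiting a cheap escape path (cheap because $V\equiv 0$ on $K$ and $S_T$ is jointly continuous, so $\delta$ can be taken small enough that the cost of a suitable open tube of exiting paths is below $\varepsilon/2$) — never appears in the proposal.
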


\begin{lemma}[{\cite[Lemma A.3]{borkar-sundaresan-12}}]
Let $K \subset M_1(\mathcal{Z})$ be a compact set and $G$ be a neighbourhood of $K$. Then, given $\varepsilon > 0$, there exist $\delta>0$ and $N_0 \geq 1$ such that for all  $\nu \in \overline{[K]_{\delta}} \cap M_1^N(\mathcal{Z})$ and $N \geq N_0$
\begin{align*}
E_\nu \left( \int_0^{\tau_G}  \indf{\{\mu_N(t) \in \overline{[K]_\delta}\}} dt \right) \geq \exp\{-N\varepsilon\}.
\end{align*}
\label{lemma:fw18}
\end{lemma}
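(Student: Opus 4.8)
The plan is to produce a lower bound on the expected occupation time of $\overline{[K]_\delta}$ before exiting $G$ by exhibiting a single, explicit trajectory of moderate cost that keeps the process inside $\overline{[K]_\delta}$ for a fixed positive time, and then invoking the uniform finite-duration large deviation lower bound of Corollary~\ref{cor:uniform_ldp}. First I would choose $\delta>0$ small enough that $\overline{[K]_{2\delta}} \subset G$ (possible since $K$ is compact and $G$ is a neighbourhood of $K$; I will in fact want $\overline{[K]_\delta} \subset G$ with a bit of room to spare). Fix a time horizon $T=1$, say. The key geometric input is that, starting from any $\nu \in \overline{[K]_\delta} \cap M_1^N(\mathcal{Z})$, there is an open set $G_\nu \subset D([0,1],M_1(\mathcal{Z}))$ of paths $\mu$ with $\mu(0)$ near $\nu$ that stay inside $\overline{[K]_\delta}$ on all of $[0,1]$ — for instance a tube around the constant path at $\nu$, or around the McKean--Vlasov flow from $\nu$ if that flow happens to leave $\overline{[K]_\delta}$ — and on this set the indicator $\indf{\{\mu_N(t)\in\overline{[K]_\delta}\}}$ integrates to $1$ over $[0,1]$ while $\tau_G > 1$. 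Hence
\begin{align*}
E_\nu\left(\int_0^{\tau_G} \indf{\{\mu_N(t)\in\overline{[K]_\delta}\}}\,dt\right) \;\geq\; P_\nu\{\mu_N \in G_\nu\}.
\end{align*}

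The second step is to bound $P_\nu\{\mu_N\in G_\nu\}$ below uniformly in $\nu$. Here I would apply the lower bound in Corollary~\ref{cor:uniform_ldp} with $K=M_1(\mathcal{Z})$ (compact) and an open set $G$ constructed so that it contains, for every starting point in $\overline{[K]_\delta}$, such a good sub-tube; concretely, take $G$ to be the $\eta$-neighbourhood (in the Skorohod metric) of the set of all constant paths sitting in $\overline{[K]_{\delta/2}}$, with $\eta$ small enough that membership forces the path to remain in $\overline{[K]_\delta}$ throughout $[0,1]$. Then
\begin{align*}
\liminf_{N\to\infty} \frac{1}{N}\log \inf_{\nu\in \overline{[K]_\delta}\cap M_1^N(\mathcal{Z})} P_\nu\{\mu_N\in G\} \;\geq\; -\sup_{\nu\in\overline{[K]_\delta}}\inf_{\mu\in G} S_{[0,1]}(\mu|\nu).
\end{align*}
The constant path at $\nu$ lies in $G$ whenever $\nu\in\overline{[K]_{\delta/2}}$, but its cost $S_{[0,1]}(\text{const}|\nu)$ is generally positive (it is zero only at equilibria of~(\ref{eqn:MVE})); the point is merely that, $K$ being compact and $S_{[0,1]}(\cdot|\cdot)$ being finite and continuous in a suitable sense, the right-hand side is a finite constant, call it $-M$, independent of $\nu$ and of $\delta$ up to the above constraints. (If $\nu$ lies in the thin shell $\overline{[K]_\delta}\setminus\overline{[K]_{\delta/2}}$, first steer it into $\overline{[K]_{\delta/2}}$ over a short time via a bounded-cost control and concatenate.) Therefore there is $N_0$ such that for $N\geq N_0$ the infimum over $\nu$ of $P_\nu\{\mu_N\in G\}$ is at least $\exp\{-N(M+1)\}$; replacing $\varepsilon$ by $\varepsilon$ at the outset (i.e. running the argument with the time horizon $T$ chosen small enough, or simply noting $M$ depends only on the model constants $c,C$ of~\ref{assm:a2} and on $\delta$), and shrinking $\delta$ further if needed, gives the stated bound $\exp\{-N\varepsilon\}$ for the prescribed $\varepsilon$.

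The main obstacle is the dependence of the cost constant $M$ on $\delta$ and the need to get below an \emph{arbitrary} prescribed $\varepsilon$: a constant path in a tiny neighbourhood of $K$ still costs a fixed amount determined by how far $\Lambda_\nu^*\nu$ is from $0$, which does not shrink as $\delta\to 0$ unless $K$ contains equilibria. The resolution is that we are free to choose the \emph{time horizon} $T$ small: the cheapest way to occupy $\overline{[K]_\delta}$ for time $T$ is to follow the McKean--Vlasov flow for as long as it stays in $\overline{[K]_\delta}$ and only pay for the remainder, and since the flow has bounded speed, for $T$ small enough it does not leave $\overline{[K]_\delta}$ at all, giving a zero-cost occupation of length $T$ — but then the occupation integral is only $T$, not $1$. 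One then balances: $E_\nu(\cdots) \geq T \cdot P_\nu\{\mu_N \text{ follows the flow}\} \to T$, and choosing $T$ fixed (not depending on $N$) already beats $\exp\{-N\varepsilon\}$ for every $\varepsilon>0$ once $N$ is large. Care is needed at starting points $\nu$ near the boundary of $\overline{[K]_\delta}$ from which the flow exits quickly; there one uses that points of $K$ are not assumed equivalent here (unlike Lemma~\ref{lemma:fw17}), so one cannot rely on the flow returning, and must instead absorb the shell of width $\delta/2$ by a short controlled segment of cost bounded uniformly in $\nu$, which is where the joint continuity of $S_T(\cdot|\cdot)$ from Theorem~\ref{thm:ldp_terminal_time} is used. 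Assembling these — small fixed $T$, the flow-following lower bound on the occupation integral, the uniform LDP lower bound of Corollary~\ref{cor:uniform_ldp}, and a boundary-shell correction — yields the lemma.
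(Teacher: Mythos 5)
Your proof transplants the Freidlin--Wentzell diffusion argument, and while that is a sensible template it overshoots what the jump structure of $\mu_N$ gives you for free. The occupation-time lower bound $\exp\{-N\varepsilon\}$, for \emph{arbitrary} $\varepsilon>0$, follows immediately from the holding time at the starting point, with no large-deviations input at all. Take any $\delta>0$ with $\overline{[K]_\delta}\subset G$ (possible by compactness of $K$). Starting from $\nu\in\overline{[K]_\delta}\cap M_1^N(\mathcal{Z})$, the process sits at $\nu$ until the first jump time $T_1\sim\mathrm{Exp}(R(\nu))$, where the total jump rate is $R(\nu)=N\sum_{(z,z')\in\mathcal{E}}\nu(z)\lambda_{z,z'}(\nu)\leq NC|\mathcal{E}|$ by~\ref{assm:a2}. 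On $[0,T_1)$ we have $\mu_N(t)=\nu\in\overline{[K]_\delta}\subset G$, so $\tau_G\geq T_1$, the integrand equals one, and
\begin{align*}
E_\nu\left(\int_0^{\tau_G}\indf{\{\mu_N(t)\in\overline{[K]_\delta}\}}\,dt\right)\;\geq\;E_\nu T_1\;=\;\frac{1}{R(\nu)}\;\geq\;\frac{1}{NC|\mathcal{E}|},
\end{align*}
which exceeds $\exp\{-N\varepsilon\}$ for all $N\geq N_0(\varepsilon)$ since the left side decays only polynomially in $N$. Note that $\delta$ need not depend on $\varepsilon$ here. The LDP machinery is genuinely required for the companion Lemma~\ref{lemma:fw17}, where the bound sought is exponentially \emph{large} in $N$; it is not needed for this one.

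Within your own LDP route there is a concrete gap, precisely at the point you flag but do not close: starting points $\nu$ on the sphere bounding $\overline{[K]_\delta}$, where the McKean--Vlasov flow exits $\overline{[K]_\delta}$ immediately, so that no open tube around a zero-cost path stays inside. You propose a controlled segment steering $\nu$ into $\overline{[K]_{\delta/2}}$ and invoke continuity of $S_T(\cdot|\cdot)$ to claim its cost is ``bounded uniformly in $\nu$.'' Bounded is not enough: you must show that cost is $<\varepsilon$, and a fixed bound $M$ fails whenever $\varepsilon<M$. Your own text is inconsistent on this point --- in one paragraph you assert $M$ is a fixed constant independent of $\delta$ and that shrinking $\delta$ fixes it, while two sentences earlier you correctly observe that the constant-path cost does not shrink with $\delta$ unless $K$ contains equilibria. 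The repair along your lines is to order the limits: first take the horizon $T$ small so the holding contribution is $<\varepsilon/2$, then $\delta$ small (with $T$ fixed) so the steering contribution across a shell of width $O(\delta)$ is also $<\varepsilon/2$, and place the LDP tube around the concatenated path, keeping careful track of which portion of the tube actually lies inside $\overline{[K]_\delta}$. None of this bookkeeping is needed once one notices the holding-time shortcut above.
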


\begin{lemma}[{\cite[Lemma A.5]{borkar-sundaresan-12}}]
Let $K \subset M_1(\mathcal{Z})$ be a compact set that does not contain any $\omega$-limit set of~(\ref{eqn:MVE}) entirely. Then, there exist positive constants $c, T_0$ and $N_0 \geq 1$ such that for all $T \geq T_0$, $N \geq N_0$ and any $\nu \in K \cap M_1^N(\mathcal{Z})$, we have
\begin{align*}
P_{\nu}( \tau_K \geq T ) \leq \exp\{-Nc(T-T_0)\}.
\end{align*}
\label{lemma:fw19}
\end{lemma}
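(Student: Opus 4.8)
The plan is to adapt the standard Freidlin--Wentzell argument (see~\cite[Chapter~6,~Section~2]{freidlin-wentzell}) to the present mean-field jump setting, using only the uniform finite-duration large deviation principle of Corollary~\ref{cor:uniform_ldp} and the strong Markov property of $\mu_N$. The key observation is that, since $K$ contains no $\omega$-limit set of~(\ref{eqn:MVE}), every solution of the McKean--Vlasov equation~(\ref{eqn:MVE}) started in $K$ must leave $K$ in finite time; by compactness of $K$ one can extract a single deterministic time $T_0$ such that for \emph{every} $\nu \in K$, every zero-cost path (i.e.\ solution of~(\ref{eqn:MVE})) started at $\nu$ exits $K$ before time $T_0$. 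Consequently, the closed set $F_\nu := \{\mu \in D([0,T_0],M_1(\mathcal{Z})) : \mu(0)=\nu,\ \mu(t)\in \overline K \text{ for all } t \in [0,T_0]\}$ contains no zero-cost path, so $\inf_{\nu \in K}\inf_{\mu \in F_\nu} S_{[0,T_0]}(\mu|\nu) > 0$; call this positive number $3c_0$ (any positive lower bound works, the factor is cosmetic). Here one must be slightly careful: the ``no $\omega$-limit set'' hypothesis gives a finite escape time pointwise, and a continuity/compactness argument (continuity of the flow of~(\ref{eqn:MVE}) in the initial condition, guaranteed by the Lipschitz assumption~\ref{assm:a2}) upgrades this to a uniform $T_0$. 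This uniformity, combined with the lower semicontinuity and good rate function property of $S_{[0,T_0]}(\cdot|\cdot)$ together with its joint continuity, is what lets us conclude the infimum of the cost over $F := \bigcup_{\nu\in K} F_\nu$ (viewed appropriately) is strictly positive.

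Next, I would apply the upper bound in Corollary~\ref{cor:uniform_ldp} with $K$ as the compact set of initial conditions and $F$ the closed set of paths that stay in $\overline{K}$ throughout $[0,T_0]$: there exist $c_0>0$ and $N_0 \geq 1$ such that for all $N \geq N_0$,
\begin{align*}
\sup_{\nu \in K \cap M_1^N(\mathcal{Z})} P_\nu(\tau_K \geq T_0) \leq \exp\{-N c_0\}.
\end{align*}
(Strictly, $F$ should be replaced by the slightly larger closed set of paths staying in $\overline K$; since $K\subset\overline K$ and $\overline K$ still contains no $\omega$-limit set if we shrink $K$ a touch, or one simply works with $\overline K$ throughout, this is harmless.) Then I would iterate this estimate using the strong Markov property at the times $T_0, 2T_0, 3T_0, \ldots$: on the event $\{\tau_K \geq kT_0\}$, the process is at some point $\mu_N(kT_0) \in K$, and the conditional probability of surviving in $K$ for another $T_0$ units of time is again at most $\exp\{-Nc_0\}$, uniformly over that point. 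Hence for every integer $k \geq 1$ and $N \geq N_0$,
\begin{align*}
\sup_{\nu \in K \cap M_1^N(\mathcal{Z})} P_\nu(\tau_K \geq k T_0) \leq \exp\{-N c_0 k\}.
\end{align*}
Finally, given $T \geq T_0$, choose $k = \lfloor T/T_0 \rfloor$, so that $k \geq T/T_0 - 1$, hence $k \geq (T-T_0)/T_0$, and $\{\tau_K \geq T\} \subset \{\tau_K \geq kT_0\}$. This gives $P_\nu(\tau_K \geq T) \leq \exp\{-N c_0 k\} \leq \exp\{-N (c_0/T_0)(T - T_0)\}$, which is the claimed bound with $c := c_0/T_0$.

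The main obstacle is the first step: extracting a \emph{uniform} escape time $T_0$ from $K$ for all McKean--Vlasov trajectories and, relatedly, verifying that the cost of remaining in $\overline K$ for time $T_0$ is bounded away from zero uniformly over starting points in $K$. This requires combining (i) the fact that $K$ contains no $\omega$-limit set, which by the definition of $\omega$-limit set forces each trajectory to eventually leave any compact subset disjoint from its $\omega$-limit set, (ii) continuous dependence of solutions of~(\ref{eqn:MVE}) on initial conditions (from the Lipschitz hypothesis~\ref{assm:a2}), and (iii) compactness of $K$. Once the uniform $T_0$ and the strict positivity of the cost are in hand, the iteration via the strong Markov property and the uniform upper large deviation bound of Corollary~\ref{cor:uniform_ldp} is routine. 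This is precisely the content of~\cite[Lemma~A.5]{borkar-sundaresan-12} and~\cite[Chapter~6,~Section~2,~Lemma~2.1]{freidlin-wentzell}, so I would cite those for the details while presenting the skeleton above.
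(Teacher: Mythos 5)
Your proof is correct and follows the standard Freidlin--Wentzell argument (uniform positive escape cost over a fixed window $T_0$ obtained from compactness and continuity of the flow, followed by the upper bound of Corollary~\ref{cor:uniform_ldp} and Markov-property iteration at deterministic multiples of $T_0$), which is exactly the approach of the cited source~\cite[Lemma~A.5]{borkar-sundaresan-12}, itself modelled on~\cite[Chapter~6,~Section~2]{freidlin-wentzell}. The paper does not reprove the lemma but simply cites this source, and your skeleton matches that proof.
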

\begin{corollary}
Under the conditions of Lemma~\ref{lemma:fw19}, there exist $ C>0$ and $N_0 \geq 1$ such that for all $\nu \in K \cap M_1^N(\mathcal{Z})$ and $N \geq N_0$,
\begin{align*}
E_\nu \tau_K \leq C.
\end{align*}
\label{cor:fw19}
\end{corollary}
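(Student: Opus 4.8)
The plan is to read the bound off directly from the exponential tail estimate of Lemma~\ref{lemma:fw19}, using the layer-cake identity $E_\nu \tau_K = \int_0^\infty P_\nu(\tau_K \geq t)\,dt$, valid since $\tau_K$ is a nonnegative random variable (and finite $P_\nu$-a.s.\ under the hypotheses, again by Lemma~\ref{lemma:fw19}).

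First I would split the integral at the threshold $T_0$ furnished by Lemma~\ref{lemma:fw19}. On $[0,T_0]$ the integrand is at most $1$, so this portion contributes at most $T_0$, uniformly in $N$ and in $\nu$. For the tail $[T_0,\infty)$, Lemma~\ref{lemma:fw19} gives $P_\nu(\tau_K \geq t) \leq \exp\{-Nc(t-T_0)\}$ for every $t \geq T_0$, every $N \geq N_0$, and every $\nu \in K \cap M_1^N(\mathcal{Z})$, with the constants $c, T_0, N_0$ independent of $N$ and $\nu$. Integrating, $\int_{T_0}^\infty \exp\{-Nc(t-T_0)\}\,dt = (Nc)^{-1} \leq (N_0 c)^{-1} \leq c^{-1}$, again uniformly. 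Combining the two pieces yields $E_\nu \tau_K \leq T_0 + c^{-1}$ for all $N \geq N_0$ and all $\nu \in K \cap M_1^N(\mathcal{Z})$, so the corollary holds with $C \coloneqq T_0 + c^{-1}$, a constant depending only on the model and on $K$.

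There is essentially no obstacle here; the one point worth checking is that the constants $c$, $T_0$, $N_0$ can be taken uniform over $\nu \in K \cap M_1^N(\mathcal{Z})$ and over $N \geq N_0$, which is precisely what Lemma~\ref{lemma:fw19} asserts. If one preferred to avoid invoking the full exponential tail, the same conclusion follows by iterating, via the strong Markov property of $\mu_N$, a single-interval estimate of the form $\sup_{\nu \in K \cap M_1^N(\mathcal{Z})} P_\nu(\tau_K \geq T) \leq q$ with $q < 1$ for some fixed $T$ slightly larger than $T_0$, which gives $P_\nu(\tau_K \geq kT) \leq q^k$ and hence a geometric bound on $E_\nu \tau_K$; the direct integration above is cleaner and is the route I would take.
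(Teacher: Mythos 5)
Your proof is correct and is exactly the standard argument the paper leaves implicit: the paper states Corollary~\ref{cor:fw19} without proof, and the layer-cake integration of the exponential tail bound from Lemma~\ref{lemma:fw19}, with the observation that $c$, $T_0$, $N_0$ are uniform in $\nu \in K \cap M_1^N(\mathcal{Z})$ and $N \geq N_0$, is the intended deduction. The resulting constant $C = T_0 + c^{-1}$ is uniform as required.
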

Recall the definition of the discrete time Markov chain $Z^N$ on $\gamma \cap M_1^N(\mathcal{Z})$. The next lemma gives upper and lower bounds on the one-step transition probabilities of the chain $Z^N$. These estimates play an important role in the study of large-time behaviour of the process $\mu_N$, as we shall see in the sequel.
\begin{lemma}[{\cite[Lemma A.6]{borkar-sundaresan-12}}]
\label{lemma:bsa6}
Given $\varepsilon >0$, there exist $\rho_0 >0$ and $N_0 \geq 1$ such that, for any $\rho_2  < \rho_0$, there exists $\rho_1 < \rho_2$ such that for any $\nu \in [K_i]_{\rho_2} \cap M_1^N(\mathcal{Z})$ and $N \geq N_0$, the one-step transition probability of the chain $Z^N$ satisfies
\begin{align}
\exp\{-N(\tilde{V}(K_i, K_j)+\varepsilon)\} \leq P(\nu, \gamma_j) \leq \exp\{-N(\tilde{V}(K_i, K_j)-\varepsilon)\}.
\label{eqn:tpm_zn}
\end{align}
\end{lemma}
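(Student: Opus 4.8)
The plan is to reduce the statement to the finite-duration uniform LDP (Corollary~\ref{cor:uniform_ldp}), combined with the exit-time estimates of Lemmas~\ref{lemma:fw17}--\ref{lemma:fw19} and the strong Markov property. First I would fix $\varepsilon > 0$. The key geometric fact to exploit is the continuity of the quasipotential-type quantities: since $\tilde V(K_i, K_j)$ is an infimum of finite-time costs over paths that avoid the other $K_k$'s, one can choose a near-optimal path $\mu$ from a point $\nu^* \in K_i$ to the interior of $K_j$, realized on a finite interval $[0,T]$, with $S_{[0,T]}(\mu|\nu^*) \le \tilde V(K_i,K_j) + \varepsilon/4$ and with $\mu$ staying a positive distance away from $\cup_{k \neq i,j} K_k$. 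By the joint continuity of $S_T(\cdot|\cdot)$ on $M_1(\mathcal Z)\times M_1(\mathcal Z)$ (cited after Theorem~\ref{thm:ldp_terminal_time}) and continuity of the flow, this path can be perturbed so that its cost and avoidance properties are stable under starting from any $\nu$ in a small neighbourhood $[K_i]_{\rho_2}$, provided $\rho_0$ (hence $\rho_2$, hence $\rho_1$) is chosen small enough. This fixes the choice of $\rho_0$ and, for given $\rho_2 < \rho_0$, of $\rho_1 < \rho_2$.

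For the \textbf{lower bound}, I would take the open tube $G$ around the near-optimal path $\mu$ consisting of trajectories that end inside $\gamma_j$ and stay outside $[\cup_{k\neq i,j}K_k]_{\rho_1}$; this is an open subset of $D([0,T],M_1(\mathcal Z))$. Applying the uniform large-deviation lower bound of Corollary~\ref{cor:uniform_ldp} with $K = \overline{[K_i]_{\rho_2}}$ gives, for $N$ large,
\begin{align*}
\inf_{\nu \in [K_i]_{\rho_2}\cap M_1^N(\mathcal Z)} P_\nu\{\mu_N \in G\} \ge \exp\{-N(\tilde V(K_i,K_j)+\varepsilon)\}.
\end{align*}
Every trajectory in $G$ realizes the event ``$Z^N_1 \in \gamma_j$'' along the passage (the process leaves $\Gamma_i$, enters $C$, and returns to $\gamma$ inside $\gamma_j$ while avoiding the neighbourhoods of the other $K_k$), so $P(\nu,\gamma_j) \ge P_\nu\{\mu_N \in G\}$ and the lower bound follows; here one uses Lemma~\ref{lemma:fw17} or a similar exit estimate to ensure that, with probability not too small on the exponential scale, the process does actually leave $\Gamma_i$ rather than returning to $\gamma_i$ first.

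For the \textbf{upper bound}, I would argue as follows. Starting from $\nu \in [K_i]_{\rho_2}$, the event $\{Z^N_1 \in \gamma_j\}$ requires a passage from a neighbourhood of $K_i$ to $\gamma_j$ that stays outside the other $\gamma_k$'s (since the first return to $\gamma$ after hitting $C$ lands in $\gamma_j$). Split according to the duration of this passage: on the ``short'' part, durations $\le mT_0$ for a large integer $m$, use the uniform LDP upper bound of Corollary~\ref{cor:uniform_ldp} with the closed set $F$ of trajectories from $\overline{[K_i]_{\rho_2}}$ that, within time $mT_0$, reach $\gamma_j$ while avoiding $\cup_{k\neq i,j}[K_k]_{\rho_1}$; taking $\rho_0,\rho_1$ small and $m$ large, the corresponding infimum of $S_{[0,mT_0]}$ is at least $\tilde V(K_i,K_j) - \varepsilon/2$ by definition of $\tilde V$ together with the continuity/stability observed above. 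On the ``long'' part, durations exceeding $mT_0$, use Lemma~\ref{lemma:fw19} (applicable because any compact set separating the $\gamma_k$'s contains no full $\omega$-limit set, by~\ref{assm:b1}) to bound the probability of spending that long outside $\gamma$ by $\exp\{-Nc(mT_0 - T_0)\}$, which is $\le \exp\{-N(\tilde V(K_i,K_j)-\varepsilon)\}$ once $m$ is large enough. Combining the two contributions and absorbing constants into the exponent for large $N$ gives the upper bound.

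\emph{Main obstacle.} The delicate point is the uniformity and the control of the ``avoidance'' constraint as the neighbourhood radii shrink: one must choose $\rho_0$ first (depending only on $\varepsilon$), then $\rho_1$ much smaller than any prescribed $\rho_2$, in such a way that (a) near-optimal avoiding paths from $K_i$ to $K_j$ survive the perturbation to starting points in $[K_i]_{\rho_2}$ and to the enlarged forbidden sets $[K_k]_{\rho_1}$, and (b) the LDP lower/upper bounds with the tube/closed sets actually capture exactly the event $\{Z^N_1\in\gamma_j\}$ rather than a strictly larger or smaller event. This is where the joint continuity of $S_T$ and a compactness argument over the finitely many pairs $(i,j)$ are essential; the rest is bookkeeping with the strong Markov property and the exit-time lemmas.
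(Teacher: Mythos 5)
Your proposal follows essentially the same approach as the paper, which itself gives only a three-line sketch and refers the reader to \cite[Lemma~A.6]{borkar-sundaresan-12} and \cite[Lemma~2.1, p.~152]{freidlin-wentzell}: uniform LDP (Corollary~\ref{cor:uniform_ldp}) as the main tool, construction of a near-optimal avoiding trajectory for the lower bound, and strong Markov combined with the uniform LDP upper bound plus an exit-time estimate (Lemma~\ref{lemma:fw19}) to control long excursions for the upper bound. You correctly identify the main technical obstacles (uniformity in the starting point, the order in which $\rho_0,\rho_2,\rho_1$ are chosen, stability of near-optimal paths under shrinking neighbourhoods).

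One small inaccuracy in the lower bound worth flagging: the claim that ``every trajectory in $G$ realizes $Z^N_1 \in \gamma_j$'' is not literally true for the tube as you define it, since a trajectory close to the near-optimal path could still dip back into $\gamma_i$ after hitting $C$, in which case $Z^N_1 \in \gamma_i$, not $\gamma_j$. The standard fix (and what \cite{freidlin-wentzell} actually do) is to first truncate the near-optimal path at the last time it leaves a small neighbourhood of $K_i$ — this can only decrease the cost — and then build the tube narrowly enough that trajectories inside it, after leaving $\overline{\Gamma_i}$, avoid the entire set $\gamma$ (including $\gamma_i$) until they enter $\gamma_j$. Your appeal to Lemma~\ref{lemma:fw17} is not quite the right tool for this step; it is the truncation-and-narrow-tube argument that makes the inclusion $G \subset \{Z^N_1 \in \gamma_j\}$ hold. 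With that correction the proposal is in line with the cited proof.
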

\begin{remark}
In the above statement, $P(\nu, \gamma_j)$ is defined as $P(\nu, \gamma_j) \coloneqq P_\nu(Z^N_1 \in \gamma_j) = P_\nu(\mu_N(\tau_1) \in \gamma_j)$.
\end{remark}
The key ingredient in the proof of the above lemma is Corollary~\ref{cor:uniform_ldp} on the uniform large deviation principle on bounded sets. For the lower bound, one constructs a specific trajectory from $\nu$ to $K_j$ and examines its cost. For the upper bound, one uses the strong Markov property at the hitting time of $[L]_{\rho_1}$ and the uniform large deviation principle. For details, the reader is referred to proof of~{\cite[Lemma~A.6]{borkar-sundaresan-12}} for the case of Markov jump processes, and proof of~\cite[Lemma~2.1,~page~152]{freidlin-wentzell} for the case of small noise diffusions.

\subsection{Behaviour near attractors indexed by subsets of $L$}
\label{subsection:L-subsets}
We now recall some results on the behaviour of the process $\mu_N$ near a small neighbourhood of attractors indexed by a given subset of $L$. Let $W \subset L$. A $W$-graph is a directed graph on $L$ such that (i) each element of $L \setminus W$ has exactly one outgoing arrow and (ii) there are no closed cycles in the graph. We denote the set of $W$-graphs by $G(W)$. For a $W$-graph $g$, define $\tilde{V}(g) = \sum_{(m\to n) \in g} \tilde{V}(K_m, K_n)$. Note that, using the estimate~(\ref{eqn:tpm_zn}), $\tilde{V}$ can be used to estimate the probability that the process $\mu_N$ traverses through a sequence of neighbourhoods in the order specified by the graph $g$.

For $i \in L\setminus W$ and $j \in W$, let $G_{i,j}(W)$ denote the set of $W$-graphs in which there is a sequence of arrows leading from $i$ to $j$.
Define
\begin{align*}
I_{i,j}(W) \coloneqq \min\{\tilde{V}(g):g \in G_{i,j}(W)\} - \min\{\tilde{V}(g): g \in G(W)\}.
\end{align*}
We recall the following result on the probability that the first entry of $\mu_N$ into a neighbourhood of a set $W \subset L$ takes place via a given compact set $K_j$, starting from a neighbourhood of $K_i$.
\begin{lemma}
Let $W \subset L$, and let $i \in L \setminus W$ and $j \in W$. Given $\varepsilon > 0$, there exist $\rho > 0$ and $N_0 \geq 1 $ such that for any $\rho_1 \leq \rho$, $\nu \in \gamma_i \cap M^N_1(\mathcal{Z})$ and $N \geq N_0$, we have
\begin{align*}
 \exp\{-N(I_{i,j}(W)+\varepsilon)\} \leq P_\nu(\mu_N(\hat{\tau}_W) \in \gamma_j) \leq \exp\{-N(I_{i,j}(W)-\varepsilon)\}.
\end{align*}
\label{lemma:hs15}
\end{lemma}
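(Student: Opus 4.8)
The plan is to reduce the statement to the one-step estimate in Lemma~\ref{lemma:bsa6} via the classical Freidlin--Wentzell graph-counting argument. Fix $W \subset L$, $i \in L \setminus W$, $j \in W$ and $\varepsilon > 0$. First, choose $\varepsilon' > 0$ small (to be specified, essentially $\varepsilon' = \varepsilon/(4|L|)$ or so), apply Lemma~\ref{lemma:bsa6} with $\varepsilon'$ to obtain $\rho_0$ and then $\rho_1 < \rho_2 < \rho_0$ so that the two-sided bound \eqref{eqn:tpm_zn} holds for the one-step transition kernel $P(\cdot,\cdot)$ of the chain $Z^N$ on $\gamma \cap M_1^N(\mathcal{Z})$, uniformly over starting points in $[K_m]_{\rho_2}$ for each $m$. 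Note $\nu \in \gamma_i$ already lies in $[K_i]_{\rho_1} \subset [K_i]_{\rho_2}$, and after each return the chain $Z^N$ lands in some $\gamma_m \subset [K_m]_{\rho_2}$, so \eqref{eqn:tpm_zn} applies at every step of the chain's evolution.

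Next, express the event $\{\mu_N(\hat\tau_W) \in \gamma_j\}$ in terms of $Z^N$. Let $\theta = \inf\{n \geq 0 : Z^N_n \in \cup_{k\in W}\gamma_k\}$ be the first time the embedded chain enters a neighbourhood of $W$. One checks that $\mu_N(\hat\tau_W)$ lands in $\gamma_j$ essentially iff $Z^N_\theta \in \gamma_j$ (the process cannot enter a $\rho_1$-neighbourhood of $K_j$ between the stopping times $\tau_{n-1}$ and $\tau_n$ of the chain without this being recorded by $Z^N$, up to the usual care with the annular region $\Gamma \setminus \gamma$; here one may need a supplementary argument that the process, once in $C$, reaches $\gamma$ before wandering — this is where Lemma~\ref{lemma:fw19}/Corollary~\ref{cor:fw19} guarantee $Z^N$ is well defined and $\theta < \infty$ a.s.). So it suffices to estimate $P_\nu(Z^N_\theta \in \gamma_j)$, a hitting-distribution quantity for a finite-state-like chain whose transition probabilities are sandwiched by $\exp\{-N(\tilde V(K_m,K_n)\pm\varepsilon')\}$.

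The core combinatorial step is then the standard identity expressing $P_\nu(Z^N_\theta \in \gamma_j)$, via summing over all finite trajectories $i = m_0 \to m_1 \to \cdots \to m_r = j$ in $L\setminus W$ that first hit $W$ at $j$, as a ratio of sums of products of one-step probabilities; using the Markov-chain-tree / Freidlin--Wentzell lemma (e.g.\ \cite[Chapter~6, Lemma~3.1--3.3]{freidlin-wentzell}), both numerator and denominator are sums over $W$-graphs (resp.\ $W$-graphs containing a chain from $i$ to $j$) of $\prod \tilde V$-weights, i.e.\ of $\exp\{-N\tilde V(g)\}$. Plugging in \eqref{eqn:tpm_zn}, the numerator is $\exp\{-N(\min_{g \in G_{i,j}(W)}\tilde V(g) \pm O(|L|\varepsilon'))\}$ and the denominator is $\exp\{-N(\min_{g\in G(W)}\tilde V(g) \pm O(|L|\varepsilon'))\}$, and their ratio gives $\exp\{-N(I_{i,j}(W) \pm O(|L|\varepsilon'))\}$. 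Choosing $\varepsilon'$ so that the accumulated error is below $\varepsilon$, and taking $\rho$ to be the resulting $\rho_1$ and $N_0$ the maximum of the finitely many thresholds, completes the proof.

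The main obstacle I anticipate is controlling the accumulation of the $\pm\varepsilon'$ errors uniformly in the (a priori unbounded) number of steps $r$ of the trajectories: a naive bound would give an error growing with $r$. The resolution — and the reason the graph formulation is used rather than a crude path expansion — is that after cancellation the estimate involves only $W$-graphs, which have at most $|L\setminus W| \leq |L|$ edges, so only $O(|L|)$ factors of $\exp\{\pm N\varepsilon'\}$ survive; one must invoke the finite-graph representation of hitting probabilities carefully (including the fact that $\min_{g\in G(W)}\tilde V(g) < \infty$ by irreducibility, so the denominator does not degenerate) rather than attempt to bound the raw series term-by-term. A secondary technical point is justifying that $\theta < \infty$ $P_\nu$-a.s.\ and that the excursions in $C$ contribute no probability mass escaping the analysis, which follows from Lemma~\ref{lemma:fw19} and Corollary~\ref{cor:fw19}.
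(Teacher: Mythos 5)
Your proposal follows essentially the same route as the paper: the paper's proof is a one-line citation to \cite[Chapter~6, Lemma~3.3, page~159]{freidlin-wentzell}, whose argument is exactly the reduction to the embedded chain $Z^N$ plus the $W$-graph bounds of \cite[Chapter~6, Lemmas~3.1--3.2]{freidlin-wentzell}, fed by the one-step transition estimates of Lemma~\ref{lemma:bsa6}. Your sketch reproduces that argument faithfully, including the correct identification of the key technical point that errors do not accumulate over excursions because a $W$-graph has at most $|L\setminus W|$ edges.

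One small caveat on the wording: the step from one-step estimates to the hitting distribution in Freidlin--Wentzell is not carried out via an exact Markov-chain-tree formula for hitting probabilities as your phrase ``standard identity \dots as a ratio of sums of products'' suggests; rather, \cite[Chapter~6, Lemmas~3.1--3.2]{freidlin-wentzell} give two-sided \emph{inequalities} sandwiching the hitting probability between $W$-graph sums. This does not affect the substance of your argument -- the conclusion and the cancellation structure are the same -- but the exact-identity phrasing is slightly misleading about how the graph lemma is actually invoked.
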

\begin{proof}The proof of~{\cite[Lemma 3.3, page 159]{freidlin-wentzell}} holds verbatim, by making use of the estimates in~Lemma~\ref{lemma:bsa6}.
\end{proof}
\begin{remark}While the above lemma provides an estimate of the probability $P_\nu(\mu_N(\hat{\tau}_W) \in \gamma_j)$, it does not provide any information about the sequence of states in $L$ visited by the process $\mu_N$ while traversing from $i$ to $j$. The latter can be understood via studying the minimisations in the definition of $I_{i,j}$, see~\cite{gan-cameron-17}.
\end{remark}

Our next step is to understand the mean entry time $E_\nu \hat{\tau}_W$. For this, we need the following estimate on the stopping time $\tau_1$; see~{\cite[Lemma~1.3, Part I]{hwang-sheu-90}} for a similar estimate for small noise diffusion processes.
\begin{lemma}Given $\varepsilon >0$, there exist $\rho_1 >0$ and $N_0 \geq 1$ such that, for any $\nu \in \gamma \cap M_1^N(\mathcal{Z})$ and $N \geq N_0$, we have
\begin{align*}
E_\nu \tau_1 \leq \exp\{N\varepsilon\}.
\end{align*}
\label{lemma:hs13}
\end{lemma}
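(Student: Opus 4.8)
The plan is to bound $\tau_1 = \inf\{t > \sigma_1 : \mu_N(t) \in \gamma\}$ by decomposing it as $\tau_1 = \sigma_1 + (\tau_1 - \sigma_1)$, where $\sigma_1 = \inf\{t > 0 : \mu_N(t) \in C\}$ is the time to leave $\overline{\Gamma}$ and enter $C = M_1(\mathcal{Z}) \setminus \overline{\Gamma}$. First I would handle the easier piece: since $\nu \in \gamma$, the process starts inside a $\rho_1$-neighbourhood of some $K_i$, and I would estimate the time it spends wandering in $\overline{\Gamma} \setminus \gamma$ before either returning to $\gamma$ (in which case $\sigma_1$ may be large but the excursion is harmless) or reaching $C$. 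The cleaner route is to note that once in $C$, the set $C$ contains no $\omega$-limit set of~(\ref{eqn:MVE}) entirely (by~\ref{assm:b1}, all $\omega$-limit sets lie in $\cup_i K_i \subset \gamma$), so Corollary~\ref{cor:fw19} applies to $C$ (or to a slightly enlarged compact set still avoiding the $K_i$'s): there is a constant $C_0$ with $E_\mu \tau_C \leq C_0$ uniformly for $\mu \in C \cap M_1^N(\mathcal{Z})$. Thus the time to cross $C$ and hit $\gamma$ contributes only an $O(1)$ term, which is $\leq \exp\{N\varepsilon\}$ for $N$ large.

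The main work is bounding $E_\nu \sigma_1$, the time to exit $\overline{\Gamma}$ starting from $\gamma_i$. Here I would invoke Lemma~\ref{lemma:fw17}: each $K_i$ is a compact set all of whose points are equivalent (by~\ref{assm:b1}), so given $\varepsilon > 0$ there exist $\delta > 0$ and $N_0$ such that $E_\mu \tau_{[K_i]_\delta} \leq \exp\{N\varepsilon/2\}$ for $\mu \in [K_i]_\delta \cap M_1^N(\mathcal{Z})$. Choosing $\rho_0$ (hence $\Gamma_i = [K_i]_{\rho_0}$) and then $\rho_1$ (hence $\gamma_i = [K_i]_{\rho_1}$) appropriately relative to this $\delta$ — specifically taking $\rho_0 \leq \delta$ so that $\overline{\Gamma_i} \subset [K_i]_\delta$ — gives $E_\nu \sigma_1 \leq E_\nu \tau_{\overline{\Gamma}} \leq E_\nu \tau_{[K_i]_\delta} \leq \exp\{N\varepsilon/2\}$, using that exiting the smaller set $\overline{\Gamma}$ happens no later than exiting $[K_i]_\delta$ (here I must be slightly careful: $\sigma_1$ is the first time the process is in $C = M_1(\mathcal{Z})\setminus\overline{\Gamma}$, which is exactly $\tau_{\overline{\Gamma}}$, the exit time from $\overline{\Gamma}$; and since $\overline{\Gamma_i} \subset \overline\Gamma$, leaving $\overline\Gamma$ forces leaving a neighbourhood of $K_i$ at least as large, so a uniform-in-start-point bound is needed). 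To get uniformity over all starting points $\nu \in \gamma \cap M_1^N(\mathcal{Z})$ — not just those in one $\gamma_i$ — I would apply the strong Markov property: after the process first exits $[K_i]_\delta$ it may re-enter $\gamma$ at some other $\gamma_j$; but by the time it re-enters $\gamma$ it has already hit $C$ or is on its way, and in any case the total excursion structure gives a bound that is a finite (at most $l$-fold) composition of the individual bounds, still of order $\exp\{N\varepsilon\}$ after adjusting constants.

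The hard part will be the bookkeeping in the last step: reconciling the definition of $\sigma_1$ and $\tau_1$ (which reference the global sets $\Gamma$, $\gamma$, $C$) with the per-component estimates from Lemma~\ref{lemma:fw17} (which are stated for a single $K_i$ and its neighbourhoods), and ensuring all the neighbourhood radii $\delta, \rho_0, \rho_1$ are chosen in the correct order so that $\overline{\Gamma_i} \subset [K_i]_\delta$ and the enlargements of $C$ still miss every $\omega$-limit set. I expect this is routine once the order of quantifiers is fixed: choose $\varepsilon$, get $\delta$ from Lemma~\ref{lemma:fw17}, set $\rho_0 < \delta$, apply Corollary~\ref{cor:fw19} to the (compact) closure of $C = M_1(\mathcal{Z}) \setminus \overline{\Gamma}$ to handle the crossing, then pick $\rho_1 < \rho_0$ (its precise value is irrelevant for this lemma), and finally combine the two $O(\exp\{N\varepsilon\})$ and $O(1)$ contributions. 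A subtlety worth flagging is that $\sigma_1$ could be infinite in principle if the process never leaves $\overline\Gamma$, but Lemma~\ref{lemma:fw17} gives a finite expectation, so $\sigma_1 < \infty$ a.s. and the decomposition is valid.
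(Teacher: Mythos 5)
Your decomposition $\tau_1 = \sigma_1 + (\tau_1 - \sigma_1)$, the use of Lemma~\ref{lemma:fw17} for the first piece, the strong Markov property, and Corollary~\ref{cor:fw19} for the second piece is exactly the paper's argument. One point is off, though. After $\sigma_1$, the process does \emph{not} merely ``cross $C$ and hit $\gamma$''; it may exit $C$ back into the annulus $\overline{\Gamma}\setminus\gamma$, wander there, re-enter $C$, and so on, all before reaching $\gamma$. So bounding $E_\mu\tau_C$ (exit time from $C$, or from $\overline{C}$) does not bound $E_\nu(\tau_1 - \sigma_1)$. The set to which Corollary~\ref{cor:fw19} must be applied is $F = M_1(\mathcal{Z})\setminus\gamma$: it is compact (as $\gamma$ is open and $M_1(\mathcal{Z})$ compact), it contains no $\omega$-limit set (all $\omega$-limit sets lie in $\cup_i K_i \subset \gamma$), and $\tau_1 - \sigma_1$ is precisely the exit time $\tau_F$ after the strong-Markov shift to time $\sigma_1$. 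Your parenthetical ``or to a slightly enlarged compact set still avoiding the $K_i$'s'' gestures in the right direction, but your final plan settles on $\overline{C}$, which is strictly smaller than $F$ and insufficient. Replace $\overline{C}$ by $F$ and the argument closes.

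A smaller remark: the ``at most $l$-fold composition'' worry for $E_\nu\sigma_1$ is unnecessary. Once $\rho_0 = 2\rho_1$ is small enough that the $\overline{\Gamma_i}$'s are pairwise disjoint, and $N$ is large enough that a single jump (of size $1/N$) cannot carry the process from one $\overline{\Gamma_i}$ to another, a trajectory started in $\gamma_i$ must enter $C$ upon leaving $\overline{\Gamma_i}$; hence $\sigma_1 = \tau_{\overline{\Gamma_i}} \leq \tau_{[K_i]_\delta}$ directly, with no compositions. That is how the paper gets uniformity over $\nu\in\gamma$ with a single application of Lemma~\ref{lemma:fw17}.
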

\begin{proof}
With a sufficiently small $\rho_1 > 0$ to be chosen later, let $\rho_0 = 2 \rho_1$ so that $[K_i]_{\rho_0}$ does not intersect with $[K_j]_{\rho_0}$ for all $j \neq i$. Note that, for any $\nu \in \gamma$,
\begin{align*}
E_{\nu} \tau_1 = E_\nu \sigma_0  +E_\nu (\tau_1 - \sigma_0).
\end{align*}
Consider the first term. By Lemma~\ref{lemma:fw17},  there exist $\rho > 0$ and $N_0 \geq 1$ such that for all $\rho_1 \leq \rho$, $\nu \in \gamma \cap M_1^N(\mathcal{Z})$ and $N \geq N_0$, we have
\begin{align*}
E_\nu \sigma_0 \leq \exp\{N\varepsilon/2\}.
\end{align*}
Let $F = M_1(\mathcal{Z}) \setminus \gamma$. By the strong Markov property, the second term is
\begin{align*}
E_\nu (\tau_1 - \sigma_0) = E_{\mu_N(\sigma_0)} (\tau_{F}).
\end{align*}
Therefore, it suffices to estimate $E_{\nu^\prime} \tau_F$ for $\nu^\prime \in F$. Since the compact set $F$ does not contain any $\omega$-limit set, by Corollary~\ref{cor:fw19}, there exist a constant $C >0$ and $N_1 \geq N_0$ such that for any $\nu^\prime \in F \cap M_1^N(\mathcal{Z})$
\begin{align*}
E_{\nu^\prime} \tau_{F} \leq C.
\end{align*}
This completes the proof of the lemma.
\end{proof}
Define
\begin{align*}
I_i(W) \coloneqq &\min\{\tilde{V}(g): g \in G(W)\} -  \min\{\tilde{V}(g): g \in G( W \cup \{i\}) \text{ or } \\&  g \in G_{i,j}(W \cup \{j\}), i \neq j, j \in L\setminus W\} \
\end{align*}
The next lemma is about the mean entry time into a neighbourhood of a given set $W \subset L$ starting from a neighbourhood of  $K_i$; see~{\cite[Lemma~1.6, Part I]{hwang-sheu-90}} for a similar estimate on small noise diffusion processes.

\begin{lemma}
\label{lemma:hs16}
Let $W \subset L$, and let $i \in L \setminus W$. Given $\varepsilon > 0$, there exist $\rho > 0$ and $N_0 \geq 1 $ such that for any $\rho_1 \leq \rho$, $\nu \in \gamma_i \cap M^N_1(\mathcal{Z})$ and $N \geq N_0$, we have
\begin{align*}
\exp\{N(I_i(W)-\varepsilon)\} \leq E_\nu \hat{\tau}_W \leq \exp\{N(I_i(W)+\varepsilon)\}.
\end{align*}
\end{lemma}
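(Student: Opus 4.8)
The plan is to prove the bounds on $E_\nu \hat{\tau}_W$ by a first-step-analysis argument over the embedded chain $Z^N$, following the structure of the proof of \cite[Lemma~1.6, Part~I]{hwang-sheu-90}, but using the jump-process ingredients established above in place of the diffusion estimates. The key quantity to control is the number of ``excursions'' the chain $Z^N$ makes among the neighbourhoods $\gamma_k$, $k \in L$, before hitting $\cup_{j\in W}\gamma_j$, together with the expected time per excursion.

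The first step is to write, by the strong Markov property of $\mu_N$ (hence of the embedded chain $Z^N$), a decomposition of the form
\begin{align*}
E_\nu \hat{\tau}_W = E_\nu \tau_1 + \sum_{k \in L\setminus W} E_\nu\!\left[ \indf{\{Z^N_1 \in \gamma_k,\ \hat{\tau}_W > \tau_1\}} \, E_{Z^N_1} \hat{\tau}_W \right] + (\text{terms from } Z^N_1 \in \gamma_j,\ j \in W).
\end{align*}
Iterating this, $E_\nu\hat{\tau}_W$ equals the expected number of steps of $Z^N$ before absorption in $[W]_{\rho_1}$, each weighted by an expected sojourn time bounded above by $\sup_{\nu'\in\gamma}E_{\nu'}\tau_1 \le \exp\{N\varepsilon'\}$ via Lemma~\ref{lemma:hs13}, and bounded below (on excursions that leave and return) using Lemma~\ref{lemma:fw18} to get a factor $\exp\{-N\varepsilon'\}$. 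So the problem reduces to estimating the expected number of steps $E_\nu \eta_W$, where $\eta_W := \min\{n \ge 1 : Z^N_n \in \cup_{j\in W}\gamma_j\}$, up to the factors $\exp\{\pm N\varepsilon'\}$.

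The second step is the combinatorial heart: estimating $E_\nu\eta_W$ using the transition-probability bounds of Lemma~\ref{lemma:bsa6}. Here I would introduce the finite Markov chain on $L$ (with absorption on $W$) whose one-step transition probabilities are comparable, up to $\exp\{\pm N\varepsilon\}$, to $\exp\{-N\tilde V(K_i,K_j)\}$, and invoke the Freidlin--Wentzell graph/Markov-chain lemmas (\cite[Chapter~6,~Lemma~3.3 and its corollaries]{freidlin-wentzell}), which express expected absorption times (and Green's-function-type sums) of such a chain as ratios of sums over $W$-graphs and $(W\cup\{i\})$-graphs. Matching the exponential order of these ratios to the definition
\[
I_i(W) = \min\{\tilde V(g): g \in G(W)\} - \min\{\tilde V(g): g \in G(W\cup\{i\}) \text{ or } g\in G_{i,j}(W\cup\{j\}),\ j\in L\setminus W\}
\]
gives $\tfrac1N\log E_\nu\eta_W \to I_i(W)$. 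Combining with the per-step time bounds from the first step, and absorbing all the $\varepsilon'$'s into a single $\varepsilon$ (choosing $\rho_1$ small enough, as permitted by Lemma~\ref{lemma:bsa6} and Lemma~\ref{lemma:hs13}), yields the claimed two-sided bound. One should also use Lemma~\ref{lemma:hs15}, or the same graph estimates, to confirm that absorption happens in finitely many steps with the right probability, so that the sum defining $E_\nu\eta_W$ converges and is dominated by the exponential factor $\exp\{N I_i(W)\}$.

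The main obstacle I expect is the second step — transferring the Freidlin--Wentzell graph-theoretic identities for expected hitting times of the idealised chain on $L$ to the actual embedded chain $Z^N$, whose transition probabilities are only known up to multiplicative errors $\exp\{\pm N\varepsilon\}$ and which lives on the continuum $\gamma \cap M_1^N(\mathcal Z)$ rather than on the finite set $L$. The resolution is that Lemma~\ref{lemma:bsa6} provides uniform (in the starting point within $[K_i]_{\rho_2}$) bounds, so one can sandwich $Z^N$ between two genuine finite chains on $L$ with transition probabilities $\exp\{-N(\tilde V \pm \varepsilon)\}$, apply the graph lemmas to each, and let $\varepsilon \downarrow 0$ after sending $N\to\infty$; the continuity of $S_T$ and the freedom to shrink $\rho_1$ ensure the sandwiching is valid. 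A secondary technical point is handling excursions into $C$ that do not reach any new $\gamma_k$ but return to the starting neighbourhood: these contribute geometrically many extra steps of bounded (in the exponential scale) expected duration, controlled by Corollary~\ref{cor:fw19} and Lemma~\ref{lemma:fw17}, and do not change the exponential order.
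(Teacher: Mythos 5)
Your proposal follows essentially the same route as the paper's proof: decompose $E_\nu \hat{\tau}_W$ into (expected number of steps $v$ of the embedded chain $Z^N$ before hitting $[W]_{\rho_1}$) $\times$ (expected per-step sojourn time), control the per-step time from above by Lemma~\ref{lemma:hs13} and from below by Lemma~\ref{lemma:fw18}, and then estimate $E_\nu v$ via the Freidlin--Wentzell $W$-graph machinery applied to the chain $Z^N$ using the transition bounds of Lemma~\ref{lemma:bsa6}. The only small discrepancy is the citation: the paper invokes \cite[Lemma~3.4, page~162]{freidlin-wentzell} (the expected-number-of-steps lemma) rather than Lemma~3.3 (the exit-place lemma); otherwise the reasoning and the lemmas you use match the paper's argument.
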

\begin{proof}
We first prove the upper bound. Note that, by the strong Markov property, we have
\begin{align*}
E_\nu \hat{\tau}_W = E_\nu \tau_v \leq \sum_{m=1}^\infty E_\nu \left(\indf{v = m} \times m \sup_{\nu^\prime \in \gamma} E_{\nu^\prime} \tau_1 \right),
\end{align*}
where $v$ is the hitting time of the chain $Z_n^N$ on the set $W$. Using~Lemma~\ref{lemma:hs13} and the upper bound on $E_\nu v$ derived in~{\cite[Lemma~3.4,~page~162]{freidlin-wentzell}}, for sufficiently small $\rho_1$ and sufficiently large $N$, we have that
\begin{align*}
E_\nu \hat{\tau}_W \leq \exp\{N(I_i(W) +\varepsilon)\}
\end{align*}
holds for all $\nu \in \gamma_i \cap M_1^N(\mathcal{Z})$. For the lower bound,  Lemma~\ref{lemma:fw18} implies that, for all sufficiently small $\rho_1$ and sufficiently large $N$, we have that
\begin{align*}
E_\nu \tau_1 \geq \exp\{-N\varepsilon\}
\end{align*}
holds for all $\nu \in \gamma$. Also,
\begin{align*}
E_\nu \hat{\tau}_W = E_\nu \tau_v \geq  \sum_{m=1}^\infty E_\nu \left(1_{v = m} \times m \inf_{\nu^\prime \in \gamma} E_{\nu^\prime} \tau_1 \right),
\end{align*}
hence, using the lower bound on $E_\nu v$ derived in~{\cite[Lemma~3.4,~page~162]{freidlin-wentzell}}, we get
\begin{align*}
E_\nu \hat{\tau}_W \geq \exp\{N(I_i(W) - \varepsilon)\}
\end{align*}
for all $\nu \in \gamma_i \cap M_1^N(\mathcal{Z})$ and sufficiency large $N$.
\end{proof}
\subsection{Cycles}
\label{subsection:cycles}
We now define the notion of cycles, which helps us to describe the most probable way in which the process $\mu_N$, for large $N$, traverses neighbourhoods of various compact sets $K_i$, and the time required to go from one to another. Define $\tilde{V}(K_i) \coloneqq \min_{j \neq i} \tilde{V}(K_i, K_j)$. We say that $i \to j$ if $\tilde{V}(K_i) = \tilde{V}(K_i,K_j)$. Note that, using the estimates~(\ref{eqn:tpm_zn}) on the transition probability of the discrete time Markov chain $Z^N$, we see that the indices that attain the minimum above are the most likely sets that will be visited by the process $\mu_N$, for large enough $N$, starting from a neighbourhood of $K_i$. For $i, j \in L$, we say that $i \Rightarrow j$ if there exists a sequence of arrows leading from $i$ to $j$, i.e., there exists $i_1, i_2 , \ldots, i_n$ in $L$ such that $i \to i_1 \to i_2 \to \cdots \to i_n \to j$. Again, the above sequence of arrows from $i$ to $j$ is one among the \emph{locally} most likely sequences in which the process traverses from a neighbourhood of $K_i$ to that of $K_j$ for large $N$.
\begin{definition}
A cycle $\pi$ is a directed graph on a subset of elements of $L$ satisfying
\begin{enumerate}
\item $i \in \pi$ and $i \Rightarrow j$ implies $j \in \pi$.
\item For any $i \neq j$ in $\pi$, we have $i \Rightarrow j $ and $j \Rightarrow i$.
\end{enumerate}
\end{definition}
It can be shown that there exists a cycle (see the proof of~\cite[Lemma~1.9,~Part~I]{hwang-sheu-90}). We now define cycle of cycles. Let $L_0 = L$. Define
\begin{align*}
L_1 \coloneqq \{\pi: \pi \text{ is a cycle in } L\} \cup \{i \in L: i \text{ is not in any cycle}\}.
\end{align*}
For $\pi_1, \pi_2 \in L_1$, $\pi_1 \neq \pi_2$, define
\begin{align*}
\hat{V}(\pi_1) \coloneqq \max \{\tilde{V}(K): K \in \pi_1\},
\end{align*}
\begin{align*}
\tilde{V}(\pi_1, \pi_2) \coloneqq \hat{V}(\pi_1) + \min \{\tilde{V}(K_1, K_2)-\tilde{V}(K_1): K_1 \in \pi_1, K_2 \in \pi_2\},
\end{align*}
and
\begin{align*}
\tilde{V}(\pi_1) \coloneqq \min\{\tilde{V}(\pi_1, \pi_2): \pi_2 \in L_1, \pi_2 \neq \pi_1\}.
\end{align*}
We say that $\pi_1 \to \pi_2$ if $ \tilde{V}(\pi_1) = \tilde{V}(\pi_1, \pi_2)$, and we say that $\pi_1 \Rightarrow \pi_2$ if there is a sequence of arrows leading from $\pi_1$ to $\pi_2$.  This gives a cycle of cycles, which we call 2-cycles.

Let us now define the hierarchy of cycles. Having defined  $(m-1)$-cycles and the sets $L_0, L_1, \ldots, L_{m-2}$, we define $m$-cycles as follows. Note that
\begin{align*}
L_{m-1}&  = \{\pi^{m-1}: \pi^{m-1} \text{ is an }  (m-1)\text{-cycle}\}   \\
& \qquad \cup \{\pi^{m-2} \in L_{m-2} : \pi^{m-2} \text{ is not in any } (m-1)\text{-cycle}\}.
\end{align*}
For $\pi^{m-1} \in L_{m-1}$, define
\begin{align*}
\hat{V}(\pi^{m-1}) \coloneqq  \max \{\tilde{V}(\pi^{m-2}): \pi^{m-2} \in \pi^{m-1}\},
\end{align*}
\begin{align*}
\tilde{V}(\pi_1^{m-1}, \pi_2^{m-1}) & \coloneqq \hat{V}(\pi_1^{m-1}) +   \min \{\tilde{V}(\pi_1^{m-2}, \pi_2^{m-2})-\tilde{V}(\pi_1^{m-2}) \\
& \qquad : \pi_1^{m-2} \in \pi_1^{m-1}, \pi_2^{m-2} \in \pi_2^{m-1}\},
\end{align*}
and
\begin{align*}
\tilde{V}(\pi_1^{m-1}) \coloneqq \min\{\tilde{V}(\pi_1^{m-1}, \pi_2^{m-1}): \pi_2^{m-1} \in L_{m-1}, \pi_2^{m-1} \neq \pi_1^{m-1}\}.
\end{align*}
We say that $\pi_1^{m-1} \to \pi_2^{m-1}$ if $\tilde{V}(\pi_1^{m-1}) = \tilde{V}(\pi_1^{m-1}, \pi_2^{m-1})  $. We have
\begin{definition} An $m$-cycle $\pi^m$ is a directed graph on a subset of elements of $L_{m-1}$ satisfying
\begin{enumerate}
\item For $\pi_1^{m-1}, \pi_2^{m-1} \in L_{m-1}$, $\pi_1^{m-1} \in \pi^m$ and $\pi_1^{m-1} \Rightarrow \pi_2^{m-1}$ implies $\pi_2^{m-1} \in \pi^m$.
\item For any $\pi_1^{m-1}, \pi_2^{m-1} \in \pi^m$, we have $\pi_1^{m-1} \Rightarrow \pi_2^{m-1}$ and $\pi_2^{m-1}\Rightarrow \pi_1^{m-1}$.
\end{enumerate}
\end{definition}
If we continue this way, for some $m \geq 1$, the set $L_m$ will eventually be a singleton, at which point we stop.

We now state some results on the mean exit time from a cycle and the most probable cycle the process $\mu_N$ visits upon exit from a given cycle. For  convenience, the set of elements of $L$ constituting a $k$-cycle $\pi^k$ (through the hierarchy of cycles) is also denoted by $\pi^k$. Also, for $W \subset L$, we define $\gamma_W = \cup_{i \in W} \gamma_i$.
\begin{corollary}
Let $\pi^k$ be a $k$-cycle and $K_i \in \pi^k$. Let $W = L \setminus \pi^k$. Given $\varepsilon > 0$, there exist $\rho >0$ and $N_0 \geq 1$ such that for all $\rho_1 \leq \rho$, $\nu \in \gamma_i \cap M_1^N(\mathcal{Z})$ and $N \geq N_0$, we have
\begin{align*}
\exp \{N(\tilde{V}(\pi^k) - \varepsilon)\} \leq E_\nu \hat{\tau}_W \leq \exp \{N(\tilde{V}(\pi^k) + \varepsilon)\}.
\end{align*}
\label{cor:hs110}
\end{corollary}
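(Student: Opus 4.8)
The plan is to prove the statement by induction on the level $k$ of the cycle, using Lemma~\ref{lemma:hs16} as the base case and the hierarchical structure of cycles to carry out the induction step. The key identity driving everything is that the mean exit time from a $k$-cycle $\pi^k$ (i.e.\ the mean hitting time of $\gamma_W$ with $W = L\setminus\pi^k$) should be governed by $\tilde V(\pi^k)$, which by construction equals $\hat V(\pi^k) + (\text{minimal ``excess cost'' of leaving }\pi^k)$. Concretely, for $k=1$ a $1$-cycle $\pi^1$ is a genuine cycle in $L$, and I claim $\tilde V(\pi^1) = I_i(W)$ for every $K_i \in \pi^1$ with $W = L\setminus\pi^1$; this is a purely combinatorial fact about the graph functionals $I_i(W)$, $\tilde V(g)$, and the definitions of $\hat V$ and $\tilde V(\pi^1,\pi^1{}')$, and once it is checked, Lemma~\ref{lemma:hs16} gives the conclusion verbatim. (If $\pi^k$ happens to be a singleton $\{i\}$ that is not in any cycle, one checks that $\tilde V(\{i\}) = I_i(L\setminus\{i\})$ and again appeals directly to Lemma~\ref{lemma:hs16}.)

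For the induction step, suppose the result holds for all $j$-cycles with $j < k$. Fix a $k$-cycle $\pi^k$ and $K_i\in\pi^k$, and let $\pi^{k-1}\in L_{k-1}$ be the $(k-1)$-subcycle containing $K_i$. First I would use the strong Markov property together with the induction hypothesis applied to the sub-cycles of $\pi^k$: starting in $\gamma_i$, the process reaches a neighbourhood of some sub-cycle $\pi^{k-1}$, spends an expected time of order $\exp\{N(\hat V(\pi^k)\pm\varepsilon)\}$ there (since $\hat V(\pi^k) = \max\{\tilde V(\pi^{k-2}) : \pi^{k-2}\in\pi^{k-1}\}$ controls the longest sub-cycle sojourn, via Corollary~\ref{cor:hs110} at level $k-1$), and then either exits $\pi^k$ altogether or jumps to another sub-cycle of $\pi^k$. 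The number of such sub-cycle-to-sub-cycle passages before exiting $\pi^k$ is a geometric-type random variable whose success probability is, by Lemma~\ref{lemma:hs15} (applied at the level of the quotient chain on $L_{k-1}$, or by the analogue of Lemma~\ref{lemma:bsa6} for $(k-1)$-cycles), of order $\exp\{-N(\Delta \pm \varepsilon)\}$ where $\Delta = \min\{\tilde V(\pi_1^{k-1},\pi_2^{k-1}) - \tilde V(\pi_1^{k-1}) : \pi_1^{k-1}\in\pi^k,\ \pi_2^{k-1}\notin\pi^k\}$ is exactly the ``excess cost'' appearing in the definition of $\tilde V(\pi^k)$. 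Multiplying the per-passage time bound $\exp\{N(\hat V(\pi^k)\pm\varepsilon)\}$ by the expected number of passages $\exp\{N(\Delta\pm\varepsilon)\}$ and summing the geometric series (the tail is controlled exactly as in the proof of Lemma~\ref{lemma:hs16}, using a crude uniform upper bound like Lemma~\ref{lemma:hs13} on a single excursion and Chebyshev/Markov on the number of passages), one obtains $E_\nu\hat\tau_W$ between $\exp\{N(\hat V(\pi^k)+\Delta - \varepsilon')\}$ and $\exp\{N(\hat V(\pi^k)+\Delta+\varepsilon')\}$, i.e.\ between $\exp\{N(\tilde V(\pi^k)\pm\varepsilon')\}$, as desired.

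The main obstacle I anticipate is twofold. First, on the combinatorial side, one must carefully verify that the hierarchical quantities $\hat V(\pi^k)$ and the excess-cost minimum $\Delta$ add up to precisely $\tilde V(\pi^k)$ and that this in turn matches the $W$-graph functional $I_i(W)$ at each level — this is the bookkeeping that makes the estimates from Lemma~\ref{lemma:hs15}, Lemma~\ref{lemma:hs16}, and the induction hypothesis compose correctly; it is essentially \cite[Section~2, Chapter~6]{freidlin-wentzell} adapted to the $(m-1)$-cycle hierarchy of \cite[Part~I]{hwang-sheu-90}. Second, one needs a uniform-in-starting-point control of the number of inter-sub-cycle passages before exit, uniformly over $\nu\in\gamma_i$, which requires knowing that after each failed exit attempt the process returns to a \emph{fixed} reference neighbourhood from which the passage probability estimate of Lemma~\ref{lemma:hs15} applies — this is where the strong Markov property of $\mu_N$ and the two-scale neighbourhood construction ($\rho_1 < \rho_0$, with $\rho_0$ chosen first depending on $\varepsilon$ and then $\rho_1$ shrunk) are essential, exactly as in the lower/upper bound arguments of Lemma~\ref{lemma:bsa6}. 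Given these ingredients, the proof is a faithful transcription of \cite[Lemma~2.2 and Lemma~2.3, Chapter~6]{freidlin-wentzell} and the corresponding steps in \cite[Part~I]{hwang-sheu-90}, with diffusion-specific arguments replaced by the finite-state mean-field estimates assembled in this section.
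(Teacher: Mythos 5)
Your approach is correct in spirit, but it is considerably more circuitous than the paper's, and the extra machinery ultimately duplicates what you need to cite anyway. The paper's proof is essentially a one-liner: Lemma~\ref{lemma:hs16} is stated for \emph{arbitrary} $W \subset L$ and $i \in L \setminus W$ — it is not restricted to a ``base case'' $k=1$ — so the corollary follows immediately once one has the purely combinatorial identity $I_i(L\setminus\pi^k) = \tilde V(\pi^k)$ for every $k$-cycle $\pi^k$ and $K_i \in \pi^k$. That identity is exactly \cite[Corollary~A.4, Appendix]{hwang-sheu-90}, and the paper simply cites it. You correctly observe the identity for $k = 1$ and apply Lemma~\ref{lemma:hs16} there, but then, rather than invoking the identity for general $k$, you set up a probabilistic induction on the cycle level that re-derives the same combinatorial fact via a geometric-number-of-passages estimate built on Lemmas~\ref{lemma:hs15} and~\ref{lemma:hs16}. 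The bookkeeping this induction requires — verifying that $\hat V(\pi^k)$ plus the excess cost $\Delta$ equals $\tilde V(\pi^k)$, and that this in turn equals $I_i(W)$ at every level of the hierarchy — is precisely the content of Hwang and Sheu's Appendix, so nothing is gained; you end up proving the same graph-theoretic lemma by a harder route.

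There is also a looseness in your induction step worth flagging. You bound the total exit time by (maximal per-passage sojourn $\exp\{N\hat V(\pi^k)\}$) times (expected number of passages $\exp\{N\Delta\}$). But the per-visit sojourn times at the various sub-cycles of $\pi^k$ range over $\exp\{N\tilde V(\pi^{k-1})\}$ for different $\pi^{k-1} \subset \pi^k$, and the expected number of visits to each sub-cycle carries its own exponential rate, so the dominant contribution to the total is not obtained by multiplying the \emph{maximum} sojourn by the \emph{total} passage count. In particular, for the lower bound one must show the process actually makes on the order of $\exp\{N\Delta\}$ visits to the sub-cycle attaining $\hat V(\pi^k)$ before escaping — a statement that does not follow from a crude geometric-variable heuristic and again requires the $W$-graph decomposition. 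In short, the argument is salvageable but circular: the probabilistic induction must eventually establish $I_i(L\setminus\pi^k)=\tilde V(\pi^k)$ anyway, and the paper short-circuits all of this by citing that identity and applying Lemma~\ref{lemma:hs16} once.
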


\begin{corollary}
Let $\pi_1^k, \pi_2^k$ be $k$-cycles, $\pi_1^k \neq \pi_2^k$, and $K_i \in \pi_1^k$. Let $W = L \setminus \pi_1^k$. Given $\varepsilon > 0$, there exist $\rho >0$ and $N_0 \geq 1$ such that for all $\rho_1 \leq \rho$, $\nu \in \gamma_i \cap M_1^N(\mathcal{Z})$ and $N \geq N_0$, we have
\begin{align*}
\exp \{-N(\tilde{V}(\pi_1^k, \pi_2^k) - \tilde{V}(\pi_1^k) + \varepsilon)\} & \leq  P_\nu (\mu_N(\hat{\tau}_W) \in \gamma_{\pi_2^k})  \\
&\leq \exp \{-N(\tilde{V}(\pi_1^k, \pi_2^k) - \tilde{V}(\pi_1^k) - \varepsilon)\}.
\end{align*}
\label{cor:hs111}
\end{corollary}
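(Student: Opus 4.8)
The plan is to reduce the statement about $k$-cycles to the base-level estimate of Lemma~\ref{lemma:hs15} (the probability that the first entry into a neighbourhood of a subset $W\subset L$ occurs via a given $K_j$) by carefully unwinding the hierarchy of cycles, exactly as in Freidlin--Wentzell's treatment of cycles and Hwang--Sheu's adaptation of it. The key observation is that $\pi_1^k$ is itself, when expanded through the hierarchy, a subset of $L$, say $\pi_1^k = \{i_1,\dots,i_r\}$ (with $K_i$ among them), and that exiting the $\rho_1$-neighbourhood of the $k$-cycle $\pi_1^k$ and landing in $\gamma_{\pi_2^k}$ is, up to the exponential scale, the same event as $\mu_N(\hat\tau_W)\in\gamma_{\pi_2^k}$ where $W = L\setminus\pi_1^k$. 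Thus the quantity to estimate is $P_\nu(\mu_N(\hat\tau_W)\in\gamma_{\pi_2^k}) = \sum_{j\in\pi_2^k} P_\nu(\mu_N(\hat\tau_W)\in\gamma_j)$, and by Lemma~\ref{lemma:hs15} each summand is bounded above and below by $\exp\{-N(I_{i,j}(W)\pm\varepsilon)\}$ for $\nu\in\gamma_i$. Since the number of indices $j$ is finite and $N$-independent, the sum is controlled, up to a $\pm\varepsilon$ adjustment in the exponent, by $\exp\{-N\min_{j\in\pi_2^k} I_{i,j}(W)\}$.

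The heart of the argument is then the combinatorial identity
\begin{align*}
\min_{j\in\pi_2^k} I_{i,j}(L\setminus\pi_1^k) = \tilde{V}(\pi_1^k,\pi_2^k) - \tilde{V}(\pi_1^k),
\end{align*}
valid for every $i$ with $K_i\in\pi_1^k$ (in particular the left side does not depend on which $i\in\pi_1^k$ one starts from — this independence is itself part of what must be checked, and it is what makes the cycle a genuine "metastable unit"). This is proved by induction on the cycle level $k$. For $k=1$ (ordinary cycles among the $K_i$'s) it is the content of the graph-theoretic lemmas in~\cite[Chapter~6]{freidlin-wentzell} relating $W$-graph minima to the $\tilde{V}(K_i,K_j)$; for the inductive step one uses the definitions of $\hat V(\pi^{m}),\ \tilde V(\pi_1^{m},\pi_2^{m}),\ \tilde V(\pi_1^{m})$ to rewrite $W$-graph minimizers on $L$ in terms of $W$-graph minimizers on the contracted state space $L_{m-1}$, peeling off one level of the hierarchy at a time. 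This is precisely the bookkeeping carried out in the proof of~\cite[Lemma~2.1,~Part~I]{hwang-sheu-90} and in~\cite[Chapter~6,~Section~2,~Lemma~3.3]{freidlin-wentzell}, and it goes through verbatim here because the only probabilistic inputs it relies on are the one-step transition estimates~\eqref{eqn:tpm_zn} (Lemma~\ref{lemma:bsa6}) and Lemma~\ref{lemma:hs15}, both of which we have available.

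Combining the two pieces: choose $\rho$ small enough that Lemma~\ref{lemma:hs15} applies with error $\varepsilon/2$ for all the finitely many pairs $(i,j)$ with $K_i\in\pi_1^k$, $K_j\in\pi_2^k$, and with the neighbourhoods small enough that entering $\gamma_{\pi_2^k}$ at time $\hat\tau_W$ coincides with first exit of a neighbourhood of $\pi_1^k$ into $\gamma_{\pi_2^k}$. The upper bound follows by summing the $|\pi_2^k|$ upper bounds from Lemma~\ref{lemma:hs15} and absorbing the constant $\log|\pi_2^k|$ into the remaining $\varepsilon/2$ for large $N$; the lower bound follows by keeping just the single term $j^\ast$ achieving $\min_{j\in\pi_2^k} I_{i,j}(W)$. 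Substituting the combinatorial identity turns $\exp\{-N(\min_j I_{i,j}(W)\pm\varepsilon)\}$ into $\exp\{-N(\tilde V(\pi_1^k,\pi_2^k)-\tilde V(\pi_1^k)\pm\varepsilon)\}$, which is the claim. The main obstacle is not any estimate but the inductive verification of the combinatorial identity through the cycle hierarchy — keeping track of which $W$-graphs on $L$ correspond to which $W$-graphs on the contracted alphabets $L_1,L_2,\dots$, and checking that the "$\hat V$" contributions telescope correctly — though since this is purely a matter of transcribing the Freidlin--Wentzell/Hwang--Sheu combinatorics (our $\tilde V(K_i,K_j)$ playing the role of their $\tilde V$), it requires care rather than new ideas.
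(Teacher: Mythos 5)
Your proposal matches the paper's own argument exactly: the paper observes (in the remark following the corollary) that Corollary~\ref{cor:hs111} follows from Lemma~\ref{lemma:hs15} applied to $W = L\setminus\pi_1^k$, with the summation over $j\in\pi_2^k$ controlled by the combinatorial identity $\min_{j\in\pi_2^k} I_{i,j}(W) = \tilde{V}(\pi_1^k,\pi_2^k)-\tilde{V}(\pi_1^k)$, which it cites from Hwang and Sheu's Corollary~A.6. Your reduction, the use of finiteness of $\pi_2^k$ to absorb the sum into the exponent, and the deferral of the identity to the Freidlin--Wentzell/Hwang--Sheu cycle combinatorics all coincide with what the paper does.
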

\begin{remark}
Note that Corollary~\ref{cor:hs110} follows from Lemma~\ref{lemma:hs16} and the fact that $I_i(W) = \tilde{V}(\pi^k)$ (which is shown in~\cite[Corollary~A.4,~Appendix]{hwang-sheu-90}). Corollary~\ref{cor:hs111} is a consequence of Lemma~\ref{lemma:hs15} along with the fact that $\min\{I_{i,j}(W) : i \in \hat{\pi}^k\} = \tilde{V}(\pi^k, \hat{\pi}^k) - \tilde{V}(\pi^k)$ (see~ \cite[Corollary~A.6,~Appendix]{hwang-sheu-90}). Similar estimates as in  Corollaries~\ref{cor:hs110} and~\ref{cor:hs111} in the case of small noise diffusion processes have been shown in {\cite[Corollary~1.10, Part I]{hwang-sheu-90}} and {\cite[Corollary~1.11, Part I]{hwang-sheu-90}}, respectively.
\end{remark}

We also need the following lemmas that provide estimates on the probabilities of exit within certain times from given cycles.
\begin{lemma}
Let $\pi_1^k, \pi_2^k$ be $k$-cycles and let $\pi_1^k \to \pi_2^k$. Then, given $\varepsilon >0$, there exist $\delta >0$, $\rho >0$ and $N_0 \geq 1$ such that for all $\rho_1 \leq \rho$, $\nu \in \gamma_{\pi_1^k} \cap M_1^N(\mathcal{Z})$ and $N \geq N_0$, we have
\begin{align*}
P_\nu \left(\bar{\tau}_{\pi_1^k} \leq \exp\{N(\tilde{V}(\pi_1^k)-\delta)\}, \mu_N(\bar{\tau}_{\pi_1^k}) \in \gamma_{\pi_2^k} \right) \geq \exp\{-N\varepsilon\}.
\end{align*}
\label{lemma:hitting_place}
\end{lemma}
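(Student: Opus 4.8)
The plan is to combine a lower bound on the probability of a single fast passage from $\gamma_{\pi_1^k}$ to $\gamma_{\pi_2^k}$ with a renewal-type argument that repeats independent attempts using the strong Markov property. First I would fix $\varepsilon>0$ and use Corollary~\ref{cor:hs111} together with the hypothesis $\pi_1^k\to\pi_2^k$ (so that $\tilde V(\pi_1^k,\pi_2^k)=\tilde V(\pi_1^k)$) to obtain $\rho>0$ and $N_0$ so that, for every $\nu\in\gamma_i\cap M_1^N(\mathcal Z)$ with $K_i\in\pi_1^k$ and all $N\ge N_0$,
\begin{align*}
P_\nu\bigl(\mu_N(\hat\tau_W)\in\gamma_{\pi_2^k}\bigr)\ge \exp\{-N\varepsilon/4\},
\end{align*}
where $W=L\setminus\pi_1^k$, since the exponent $\tilde V(\pi_1^k,\pi_2^k)-\tilde V(\pi_1^k)$ vanishes. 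Next, by Corollary~\ref{cor:hs110}, $E_\nu\hat\tau_W\le\exp\{N(\tilde V(\pi_1^k)+\varepsilon/8)\}$ uniformly over such $\nu$; applying Markov's inequality with threshold $T_0:=\exp\{N(\tilde V(\pi_1^k)-\delta)\}/2$ for a $\delta>0$ to be chosen small (say $\delta<\varepsilon/16$ will do after we pick $\varepsilon$ first) gives
\begin{align*}
P_\nu\bigl(\hat\tau_W> T_0\bigr)\le \frac{E_\nu\hat\tau_W}{T_0}\le 2\exp\{-N(\delta-\varepsilon/8)\},
\end{align*}
which can be made smaller than, say, $\tfrac12\exp\{-N\varepsilon/4\}$ for large $N$ by ensuring $\varepsilon$ is small relative to the eventual target and $\delta$ is chosen appropriately — here one must be slightly careful and simply re-run the argument with a smaller auxiliary $\varepsilon'$ in the two corollaries so that the final exponent is the $\varepsilon$ in the statement. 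Combining the two displays, a single "attempt" succeeds — meaning it reaches $\gamma_{\pi_2^k}$ in time at most $T_0$ and before reaching any other $\gamma_j$, $j\in L\setminus\pi_1^k$ — with probability at least $p_N:=\tfrac12\exp\{-N\varepsilon/4\}$, uniformly over starting points in $\gamma_i\cap M_1^N(\mathcal Z)$.

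I would then argue that starting from an arbitrary $\nu\in\gamma_{\pi_1^k}\cap M_1^N(\mathcal Z)$, the process either succeeds on the first attempt, or at time $\hat\tau_W$ it lands in $\gamma_j$ for some $j\in\pi_1^k$ (it cannot land outside $\gamma$ by definition of $\hat\tau_W$, and if it landed in $\gamma_{\pi_2^k}$ we are done, while landing in $\gamma_{j'}$ for $j'\in L\setminus\pi_1^k$, $j'\notin\pi_2^k$, is a "failure" but still leaves us — no, this needs care). The cleaner route: define the successive visit times $\tau_n$ to $\gamma$ and the embedded chain $Z^N_n$ from the preliminary results; by the cycle property, from any state in $\gamma_{\pi_1^k}$ the chain either exits $\pi_1^k$ or remains, and the one-step estimates of Lemma~\ref{lemma:bsa6} show that exiting $\pi_1^k$ to anything other than $\pi_2^k$ has exponentially smaller probability than the $\exp\{-N\varepsilon/4\}$ bound. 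So with overwhelming probability, the \emph{first} exit of $\pi_1^k$ by the embedded chain is into $\gamma_{\pi_2^k}$; meanwhile the number of embedded steps before that exit is stochastically dominated by a geometric random variable, and each step takes time at most $\sup_{\nu'\in\gamma}E_{\nu'}\tau_1\le\exp\{N\varepsilon/8\}$ in expectation by Lemma~\ref{lemma:hs13}. Multiplying, the total time to exit $\pi_1^k$ into $\gamma_{\pi_2^k}$ is, with probability at least $\exp\{-N\varepsilon\}$, at most $\exp\{N(\tilde V(\pi_1^k)-\delta)\}$ — this is exactly $\bar\tau_{\pi_1^k}\le\exp\{N(\tilde V(\pi_1^k)-\delta)\}$ with $\mu_N(\bar\tau_{\pi_1^k})\in\gamma_{\pi_2^k}$.

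The main obstacle I anticipate is bookkeeping the two competing exponential scales correctly: the "budget" time $\exp\{N(\tilde V(\pi_1^k)-\delta)\}$ must be large enough to accommodate (with good probability) both the geometric number of failed excursions inside $\gamma_{\pi_1^k}$ and the $\exp\{N\times\text{(cost of internal moves)}\}$ excursion lengths, yet the internal moves within a cycle have cost strictly less than $\tilde V(\pi_1^k)$ by the very definition of a cycle (every $i\in\pi_1^k$ has $\tilde V(K_i)\le\hat V(\pi_1^k)\le\tilde V(\pi_1^k)$, with the strict gap coming from $\delta$), so there is genuine room — but making this rigorous requires invoking Corollary~\ref{cor:hs110} applied to \emph{sub-cycles} of $\pi_1^k$, or more simply re-deriving that internal sojourn times are $o(\exp\{N(\tilde V(\pi_1^k)-\delta)\})$ in probability via Markov's inequality on the sub-cycle mean exit times. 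A second, more minor obstacle is ensuring all the "$\varepsilon$" and "$\rho$" choices in Lemmas~\ref{lemma:hs13},~\ref{lemma:hs16} and Corollaries~\ref{cor:hs110},~\ref{cor:hs111} can be taken simultaneously, which is routine since each asserts the existence of a common $\rho$ and $N_0$ for the given tolerance and one just takes the minimum $\rho$ and maximum $N_0$. Following the structure of~\cite[Lemma~1.8 or Lemma~2.1, Part~I]{hwang-sheu-90} — the analogous statement for small-noise diffusions — should make the argument go through essentially verbatim once the mean-exit-time and exit-distribution estimates above are in hand.
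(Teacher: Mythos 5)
The paper does not supply a proof of this lemma; it only remarks that it can be obtained following the arguments in Freidlin--Wentzell (Chapter~6, Theorem~6.2) and Hwang--Sheu (Lemma~2.1, Part~I). So you are proving it from scratch, which is fine, but the argument you sketch does not close.

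\textbf{The central gap is the Markov-inequality step.} You set $T_0 = \exp\{N(\tilde V(\pi_1^k)-\delta)\}/2$ and try to show $P_\nu(\hat\tau_W > T_0)$ is small by Markov's inequality applied to the bound $E_\nu\hat\tau_W \le \exp\{N(\tilde V(\pi_1^k)+\varepsilon/8)\}$ from Corollary~\ref{cor:hs110}. But $T_0$ is exponentially \emph{smaller} than the mean exit time (the gap is a factor $\exp\{N(\delta+\varepsilon/8)\}$), so Markov's inequality gives
$P_\nu(\hat\tau_W > T_0) \le E_\nu\hat\tau_W / T_0 \le 2\exp\{N(\varepsilon/8 + \delta)\}$,
which exceeds $1$ and is vacuous. (Your displayed bound $2\exp\{-N(\delta-\varepsilon/8)\}$ has the wrong sign in the exponent, and even with $\delta<\varepsilon/16$ as you suggest it would still exceed $1$.) This is not a bookkeeping issue that can be repaired by re-running the argument with smaller auxiliary tolerances: Markov's inequality controls deviations above the mean, and the event $\{\hat\tau_W\le T_0\}$ is a deviation \emph{below} the mean. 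Indeed, Lemma~\ref{lemma:hitting_time} tells you that $P_\nu(\bar\tau_{\pi_1^k} < \exp\{N(\tilde V(\pi_1^k)-\delta)\})$ is itself exponentially small, so the event in the lemma is genuinely rare, and the whole point of the lemma is a matching exponential \emph{lower} bound on that rare event's probability. No first-moment bound can produce that.

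\textbf{The ``cleaner route'' in your second paragraph also does not establish the time control.} You correctly observe (via Corollary~\ref{cor:hs111} with $\tilde V(\pi_1^k,\pi_2^k)=\tilde V(\pi_1^k)$) that, conditional on exiting, the exit goes to $\gamma_{\pi_2^k}$ with probability at least $\exp\{-N\varepsilon'\}$. But the key remaining question is: with what probability does the exit happen \emph{before} time $\exp\{N(\tilde V(\pi_1^k)-\delta)\}$? Your ``multiplying'' of a geometric step count by the per-step expectation from Lemma~\ref{lemma:hs13} yields a typical total time of order $\exp\{N(\tilde V(\pi_1^k)+\varepsilon/8)\}$, which is \emph{larger} than the budget, so this computation only confirms that the event is atypical, not that it has the claimed lower bound on probability. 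The stochastic-dominance-by-a-geometric claim is also unsubstantiated given the nested cycle structure: the embedded chain must internally traverse $(k-1)$-cycles of $\pi_1^k$, each of which takes an exponential number of steps, and there is no single per-step exit probability to appeal to. What is actually needed is a direct construction: use the uniform LDP lower bound (Corollary~\ref{cor:uniform_ldp}) to exhibit a path of cost close to $\tilde V(\pi_1^k)$ that exits to $\gamma_{\pi_2^k}$ within a finite (constant in $N$) time window, partition the budget $\exp\{N(\tilde V(\pi_1^k)-\delta)\}$ into $\exp\{N(\tilde V(\pi_1^k)-\delta)\}/T$ such windows, and show via the strong Markov property that the probability of at least one successful window is at least $\exp\{-N(\delta+\varepsilon')\}/T \ge \exp\{-N\varepsilon\}$; for $k>1$ this is an induction on the cycle level, because the ``one window'' attempt itself requires exiting sub-cycles with their own exponential time scales. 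This is the structure of the Freidlin--Wentzell and Hwang--Sheu arguments the paper points to, and it is qualitatively different from the first-moment approach you outline.
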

\begin{lemma}
Let $\pi^k$ be a $k$-cycle. Then, given $\varepsilon > 0$, there exists $\rho > 0$ such that for all $\rho_1 \leq \rho$, we have
\begin{align*}
\lim_{N \to \infty} \sup_{\nu \in \gamma_{\pi^k} \cap M_1^N(\mathcal{Z})} P_\nu\left(  \exp\{N(\tilde{V}(\pi^k) - \varepsilon)\} \leq \bar{\tau}_{\pi^k} \leq \exp\{N(\tilde{V}(\pi^k) + \varepsilon)\} \right) =1.
\end{align*}
Furthermore, given $\varepsilon > 0$, there exist $\delta >0$, $\rho > 0$ and $N_0 \geq 1$ such that for all $\rho_1 \leq \rho$, $N \geq N_0$ and $\nu \in \gamma_{\pi^k} \cap M^N_1(\mathcal{Z})$, we have
\begin{align*}
P_\nu\left(  \bar{\tau}_{\pi^k}  < \exp\{N(\tilde{V}(\pi^k) - \delta)\} \right) \leq \exp\{-N\varepsilon\}, \text{ and} \\
P_\nu\left(  \bar{\tau}_{\pi^k}  > \exp\{N(\tilde{V}(\pi^k) + \delta)\} \right) \leq \exp\{-N\varepsilon\}.
\end{align*}
\label{lemma:hitting_time}
\end{lemma}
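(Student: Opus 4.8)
The plan is to derive both statements from the cycle-structure estimates already established, exactly along the lines of Hwang–Sheu's treatment of small-noise diffusions. The principal tool is Corollary~\ref{cor:hs110}, which gives $E_\nu \hat\tau_W \asymp \exp\{N\tilde V(\pi^k)\}$ (on the exponential scale) for $W = L \setminus \pi^k$, together with the observation that $\bar\tau_{\pi^k}$ — the first entry into $\gamma_j$ for some $j \in L\setminus\pi^k$ — differs from $\hat\tau_W$ only by a bounded number of short excursions and hence has the same exponential order of mean. Indeed, by the strong Markov property at the successive visits of the chain $Z^N$ to $\gamma$, and using Lemma~\ref{lemma:hs13} (each $E_\nu\tau_1 \le \exp\{N\varepsilon\}$) and Lemma~\ref{lemma:fw18} (each $E_\nu\tau_1 \ge \exp\{-N\varepsilon\}$) to control the durations of individual passages, one gets $\exp\{N(\tilde V(\pi^k)-\varepsilon)\} \le E_\nu\bar\tau_{\pi^k} \le \exp\{N(\tilde V(\pi^k)+\varepsilon)\}$ for small $\rho_1$, large $N$.

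First I would prove the lower-tail bound $P_\nu(\bar\tau_{\pi^k} < \exp\{N(\tilde V(\pi^k)-\delta)\}) \le \exp\{-N\varepsilon\}$. For this, observe that in order to leave $\gamma_{\pi^k}$ the process must, starting from a neighbourhood of some $K_i \in \pi^k$, traverse a chain of passages between the $\gamma_j$'s, $j\in\pi^k$, and the probability of any single ``costly'' passage along an arrow $i\to j$ not belonging to the internal cycle structure is exponentially small by Lemma~\ref{lemma:bsa6} (Eq.~\eqref{eqn:tpm_zn}). Quantitatively: divide the time interval $[0,\exp\{N(\tilde V(\pi^k)-\delta)\}]$ into $\lceil \exp\{N(\tilde V(\pi^k)-\delta)\}/t_0\rceil$ blocks of fixed length $t_0$; by Corollary~\ref{cor:uniform_ldp} and continuity of $S_T$, the probability of exiting $\gamma_{\pi^k}$ within any one block of length $t_0$ starting from $\gamma_{\pi^k}$ is at most $\exp\{-N(\tilde V(\pi^k)-\delta/2)\}$ for small $\rho_1$ and large $N$ (this is the uniform finite-horizon estimate underlying Lemma~\ref{lemma:fw19}/Lemma~\ref{lemma:bsa6}); multiplying by the (subexponential in the right scaling) number of blocks and invoking the strong Markov property at the block boundaries yields the stated bound after absorbing constants.

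For the upper-tail bound $P_\nu(\bar\tau_{\pi^k} > \exp\{N(\tilde V(\pi^k)+\delta)\}) \le \exp\{-N\varepsilon\}$, I would use Markov's inequality together with a renewal/restart argument: from any $\nu\in\gamma_{\pi^k}$, $E_\nu\bar\tau_{\pi^k} \le \exp\{N(\tilde V(\pi^k)+\delta/2)\}$ by Corollary~\ref{cor:hs110} (extended to $\bar\tau$ as above). Hence $P_\nu(\bar\tau_{\pi^k} > \exp\{N(\tilde V(\pi^k)+\delta)\}) \le \exp\{-N\delta/2\}$ directly from Markov. To turn the exponent $\delta/2$ into an arbitrary $\varepsilon$, restart: if the process has not exited by time $\exp\{N(\tilde V(\pi^k)+\delta/2)\}\cdot m$, then it failed $m$ independent-in-law attempts (by the strong Markov property at the endpoints of the blocks, using that wherever the process sits in $\gamma_{\pi^k}$ the same mean bound applies), giving probability at most $2^{-m}$; choosing $m$ a large constant multiple makes this $\le\exp\{-N\varepsilon\}$ while keeping the total time $\le\exp\{N(\tilde V(\pi^k)+\delta)\}$. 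The first assertion of the lemma (the $1-o(1)$ concentration of $\bar\tau_{\pi^k}$ on the exponential scale) then follows immediately by combining the two tail estimates and taking $\varepsilon$ there to be, say, twice the $\delta$ of the tail bounds.

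The main obstacle is the lower-tail estimate: one must rule out a ``fast'' exit from $\gamma_{\pi^k}$, and this requires the uniform large-deviation lower bound on the cost of any trajectory that leaves $\gamma_{\pi^k}$ in a bounded time, uniformly over the starting point in $\gamma_{\pi^k}$ — which is exactly what Corollary~\ref{cor:uniform_ldp} and the continuity of $S_T$ provide, but one has to be careful that the cost of leaving $\gamma_{\pi^k}$ (as opposed to leaving a single $\gamma_i$) is at least $\tilde V(\pi^k) - o(1)$ as $\rho_0,\rho_1\to 0$; this is the combinatorial heart of the cycle definition and is where the identity $I_i(W) = \tilde V(\pi^k)$ (cf.\ the remark after Corollary~\ref{cor:hs111}, proved in \cite[Appendix]{hwang-sheu-90}) gets used. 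Everything else is bookkeeping with the strong Markov property and the block decomposition.
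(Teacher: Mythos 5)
Your upper-tail argument (Markov inequality on $E_\nu\bar\tau_{\pi^k}$ from Corollary~\ref{cor:hs110}, followed by a restart over blocks of length $\exp\{N(\tilde V(\pi^k)+\delta/2)\}$) is essentially the right idea, modulo the bookkeeping needed when the process sits outside $\gamma_{\pi^k}$ at a block boundary --- you then have to wait an extra $O(1)$ time for re-entry into $\gamma$ via Lemma~\ref{lemma:fw19}, which costs nothing on the exponential scale. The paper itself does not write out a proof for this lemma; it invokes {\cite[Chapter~6,~Theorem~6.2]{freidlin-wentzell}} and {\cite[Lemma 2.2, Part I]{hwang-sheu-90}}, whose mechanism your upper-tail plan reproduces.

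The lower-tail argument, however, contains a genuine gap. You claim that for any starting point in $\gamma_{\pi^k}$, the probability of exit during a fixed-length block is at most $\exp\{-N(\tilde V(\pi^k)-\delta/2)\}$, and justify this by asserting that ``the cost of leaving $\gamma_{\pi^k}$ ... is at least $\tilde V(\pi^k) - o(1)$.'' This is false. The quantity $\tilde V(\pi^k)$ is not the minimum Freidlin--Wentzell cost of a single trajectory that escapes the cycle; it is the recursively defined exponent of the \emph{mean} exit time, built as $\hat V(\pi^k)$ (the depth of the deepest sub-cycle) plus an incremental escape cost. Consider a two-state cycle $\{K_1,K_2\}$ with $\tilde V(K_1)=1$, $\tilde V(K_1,K_3)=1.5$, $\tilde V(K_2)=3$, $\tilde V(K_2,K_3)=10$, so $\hat V(\pi)=3$ and $\tilde V(\pi)=3.5$. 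Starting near $K_1$, the direct one-shot path into $\gamma_3$ in bounded time has cost $1.5$, so the per-block exit probability is about $\exp\{-1.5N\}$ --- an order of magnitude larger than $\exp\{-N(3.5-\delta/2)\}$. Your union bound over $\exp\{N(\tilde V(\pi)-\delta)\}/t_0$ blocks then gives roughly $\exp\{N(\tilde V(\pi)-\delta)\}\cdot\exp\{-1.5N\}=\exp\{2N-N\delta\}$, which blows up. The identity $I_i(W)=\tilde V(\pi^k)$ that you invoke from {\cite[Appendix]{hwang-sheu-90}} governs the expected return count of the skeleton chain $Z^N$; it says nothing about the cost of an individual short trajectory and cannot be used to produce the pathwise lower bound you need.

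The reason the lower tail nevertheless holds is a conditioning that your block decomposition discards: the cheap escapes from the shallow well can only occur during the (rare) visits to $\gamma_1$, and the process spends most of its time near $K_2$. The correct argument therefore has to track, not fixed time blocks, but the successive excursions to and from a neighbourhood of the deepest $(k-1)$-cycle inside $\pi^k$. This is precisely the structure of the paper's own proof of Lemma~\ref{lemma:exit_time_vhat}, where stopping times $\hat\theta_n$, $\bar\theta_n$ alternate between entering and leaving $[\pi^{k-1}]_{\rho_1}$, the number of such excursions before escape is controlled via Corollary~\ref{cor:hs111} and the $W$-graph combinatorics, and the duration of each excursion is bounded below through Lemma~\ref{lemma:fw18}-type estimates. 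Your plan needs to be rewritten along those lines; the uniform finite-horizon LDP of Corollary~\ref{cor:uniform_ldp} is still a necessary ingredient for the per-excursion estimates, but it is not sufficient on its own to deliver the $\tilde V(\pi^k)$ exponent.

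Finally, note that the first displayed assertion of the lemma does not literally ``follow immediately'' by setting $\varepsilon=2\delta$: the second part asserts that for a given target exponent $\varepsilon$ there exists \emph{some} $\delta$, without claiming that $\delta$ can be taken small. To deduce the concentration statement at scale $\exp\{N\varepsilon_1\}$ for an arbitrary $\varepsilon_1>0$ one needs the (true, and present in the Freidlin--Wentzell argument) quantitative version in which $\delta$ can be chosen at most $\varepsilon_1$ with a corresponding positive exponent; this should be stated explicitly rather than treated as automatic.
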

\begin{remark}
Lemma~\ref{lemma:hitting_place} can be proved using Lemma~\ref{lemma:fw19} and {\cite[Chapter~6,~Theorem~6.2]{freidlin-wentzell}}, and Lemma~\ref{lemma:hitting_time} can be proved using the same arguments used in the proof of~{\cite[Chapter~6,~Theorem~6.2]{freidlin-wentzell}}. Similar estimates as in Lemmas~\ref{lemma:hitting_place} and~\ref{lemma:hitting_time} in the case of small noise diffusion processes have been shown in {\cite[Lemma 2.1, Part I]{hwang-sheu-90}} and {\cite[Lemma 2.2, Part I]{hwang-sheu-90}}, respectively.
\end{remark}
\begin{lemma}
Let $\pi^k$ be a $k$-cycle and assume that $\tilde{V}(\pi^k) > 0$. Given $\varepsilon > 0$, there exist $\delta>0, \rho >0$ and $N_0 \geq 1$ such that for all $\rho_1 \leq \rho, \nu \in M_1^N(\mathcal{Z})$ and $N \geq N_0$, we have
\begin{align*}
P_{0,\nu}(\bar{\tau}_{\pi^k} \leq \exp\{N(\hat{V}(\pi^k)+\delta\}) \leq \exp\{-N(\tilde{V}(\pi^k) - \hat{V}(\pi^k) - \varepsilon)\}.
\end{align*}
\label{lemma:exit_time_vhat}
\end{lemma}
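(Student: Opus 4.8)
(Throughout, I use $\mu_N$ and $P_\nu$ for the process in the statement.) The plan is to run, at the time scale $\exp\{N\hat V(\pi^k)\}$, the mechanism behind \cite[Chapter~6, Theorem~6.2]{freidlin-wentzell} and \cite[Lemma~2.2, Part~I]{hwang-sheu-90}: this budget is of the same exponential order as the time the trajectory is detained in the ``tallest'' sub-cycle of $\pi^k$, so only boundedly-exponentially-many attempts to leave $\pi^k$ can occur before time $\exp\{N(\hat V(\pi^k)+\delta)\}$, and each succeeds with probability at most $\exp\{-N(\tilde V(\pi^k)-\hat V(\pi^k)-\varepsilon/2)\}$. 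The algebraic input is that, on unwinding the recursive definitions of $\hat V$ and $\tilde V$ along the cycle-of-cycles hierarchy,
\begin{align*}
\tilde V(\pi^k)-\hat V(\pi^k)=\min\bigl\{\tilde V(\pi_1^{k-1},\pi_2^{k-1})-\tilde V(\pi_1^{k-1}):\ \pi_1^{k-1}\in\pi^k,\ \pi_2^{k-1}\in L_{k-1}\setminus\pi^k\bigr\};
\end{align*}
that is, $\tilde V(\pi^k)-\hat V(\pi^k)$ is the least ``surplus cost'', over all sub-cycles $\pi_1^{k-1}$ of $\pi^k$, of jumping directly out of $\pi^k$ from $\pi_1^{k-1}$ rather than performing an ordinary exit from $\pi_1^{k-1}$. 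Fix a sub-cycle $\pi_*^{k-1}\in\pi^k$ with $\tilde V(\pi_*^{k-1})=\hat V(\pi^k)$ and put $\gamma_*:=\gamma_{\pi_*^{k-1}}$.

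The first ingredient is a \emph{one-attempt estimate}: there are $\rho>0$ and $N_0\geq 1$ such that for all $\rho_1\le\rho$, $N\ge N_0$ and $\nu\in\gamma_*\cap M_1^N(\mathcal Z)$,
\begin{align*}
P_\nu\bigl(\mu_N\text{ reaches }\gamma_{L\setminus\pi^k}\text{ before re-entering }\gamma_*\bigr)\ \le\ \exp\{-N(\tilde V(\pi^k)-\hat V(\pi^k)-\varepsilon/4)\}.
\end{align*}
One splits the excursion according to whether it leaves $\gamma_*$ straight into $\gamma_{L\setminus\pi^k}$ --- controlled by the level-$(k-1)$ instance of Corollary~\ref{cor:hs111} applied to $\pi_*^{k-1}$ together with the displayed identity --- or whether it first enters another sub-cycle of $\pi^k$ and only later reaches $\gamma_{L\setminus\pi^k}$; the latter is controlled by Lemma~\ref{lemma:hs15} with $W=(L\setminus\pi^k)\cup\{m:K_m\in\pi_*^{k-1}\}$, using the identification of the relevant $I_{i,j}(W)$ with $\tilde V(\pi^k)-\hat V(\pi^k)$ (a $W$-graph computation of the type underlying the Remark after Corollaries~\ref{cor:hs110}--\ref{cor:hs111}; cf.\ \cite[Appendix]{hwang-sheu-90}). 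For $k=1$, Corollary~\ref{cor:hs111} is replaced by Lemma~\ref{lemma:bsa6}, and Lemma~\ref{lemma:hitting_time} below is used in its base-case form for neighbourhoods of individual compact sets $K_i$.

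The second ingredient \emph{counts attempts}. Let $\theta_0$ be the first hitting time of $\gamma_*$, and for $j\ge 0$ let $\theta_{j+1}$ be the first time after $\theta_j$ at which $\mu_N$, having left $\gamma_*$, returns to it; since $\pi^k\setminus\pi_*^{k-1}$ is not a cycle, each $\theta_{j+1}$ is finite unless $\mu_N$ has meanwhile left $\pi^k$. Invoking the level-$(k-1)$ instance of Lemma~\ref{lemma:hitting_time} for $\pi_*^{k-1}$ with a target exponent larger than $\tilde V(\pi^k)-\hat V(\pi^k)$, one fixes $\delta>0$ small enough that each residence of $\mu_N$ in $\gamma_*$ lasts at least $\exp\{N(\hat V(\pi^k)-\delta)\}$ outside an event of probability at most $\exp\{-N(\tilde V(\pi^k)-\hat V(\pi^k)+1)\}$. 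Hence, with $T=\exp\{N(\hat V(\pi^k)+\delta)\}$, on an event of probability at least $1-\exp\{-N(\tilde V(\pi^k)-\hat V(\pi^k)+\tfrac12)\}$, at most $\exp\{2N\delta\}$ attempts are completed before $T$. The strong Markov property at the $\theta_j$'s, the one-attempt estimate, and a union bound then give, for $\nu\in\gamma_*\cap M_1^N(\mathcal Z)$,
\begin{align*}
P_\nu\bigl(\bar\tau_{\pi^k}\le T\bigr)\ \le\ & \exp\{2N\delta\}\exp\{-N(\tilde V(\pi^k)-\hat V(\pi^k)-\varepsilon/4)\}\\
&{}+\exp\{-N(\tilde V(\pi^k)-\hat V(\pi^k)+\tfrac12)\},
\end{align*}
which is at most $\exp\{-N(\tilde V(\pi^k)-\hat V(\pi^k)-\varepsilon)\}$ once $\delta$ is small (depending on $\varepsilon$); one reduces a general $\nu\in\gamma_{\pi^k}$ to this by adding a single initial excursion to $\gamma_*$, again bounded by the one-attempt estimate, and an arbitrary $\nu\in M_1^N(\mathcal Z)$ by using Lemma~\ref{lemma:hs13} and Corollary~\ref{cor:fw19} to reach $\gamma$ within time $\exp\{N\varepsilon/8\}$ with overwhelming probability and then restarting, via the strong Markov property, from the first visit to $\gamma_{\pi^k}$. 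The crux, and the main obstacle, is precisely the attempt-counting step: one must exclude the scenario in which $\mu_N$ rattles far more than $\exp\{2N\delta\}$ times through the ``cheap'' sub-cycles of $\pi^k$ within the budget $T$, \emph{and} make the attendant exceptional probabilities beat $\exp\{-N(\tilde V(\pi^k)-\hat V(\pi^k))\}$, not merely $\exp\{-N\varepsilon\}$ --- both of which rest on the cycle-of-cycles structure of $\pi^k$ (every recurrence of the trajectory being eventually detained in $\gamma_*$, where Lemma~\ref{lemma:hitting_time} controls the residence time) together with a careful ordering of the parameter choices $\varepsilon\mapsto\delta\mapsto\rho\mapsto N_0$.
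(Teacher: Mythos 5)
Your proposal is aligned with the paper's overall strategy: fix a sub-cycle $\pi_*^{k-1}\in\pi^k$ with $\tilde V(\pi_*^{k-1})=\hat V(\pi^k)$, track returns to $\gamma_*$, bound the per-excursion escape probability by roughly $\exp\{-N(\tilde V(\pi^k)-\hat V(\pi^k))\}$ via Corollary~\ref{cor:hs111}, Lemma~\ref{lemma:hs15} and the Hwang--Sheu Appendix identities, and then union-bound over excursions. You even flag the attempt-counting step as ``the crux.'' Unfortunately that is exactly where the argument as written does not close. Your bound ``at most $\exp\{2N\delta\}$ attempts before $T$'' requires that the \emph{same} $\delta$ serve two incompatible roles: it must be small enough that $\exp\{2N\delta\}\exp\{-N(\tilde V(\pi^k)-\hat V(\pi^k)-\varepsilon/4)\}\le\exp\{-N(\tilde V(\pi^k)-\hat V(\pi^k)-\varepsilon)\}$ (so $\delta\le 3\varepsilon/8$), and it must simultaneously satisfy the lower residence-time estimate $P_\nu(\bar\tau_{\pi_*^{k-1}}<\exp\{N(\hat V(\pi^k)-\delta)\})\le\exp\{-N(\tilde V(\pi^k)-\hat V(\pi^k)+1)\}$. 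But Lemma~\ref{lemma:hitting_time} has the quantifier order ``given $\varepsilon$, there exists $\delta$'': the time-tolerance $\delta$ is an output of that lemma for a prescribed exceptional exponent, not a free parameter, and nothing in its statement lets you force it below $3\varepsilon/8$ while keeping the exceptional exponent as large as $\tilde V(\pi^k)-\hat V(\pi^k)+1$. Writing ``one fixes $\delta>0$ small enough that each residence lasts at least $\exp\{N(\hat V(\pi^k)-\delta)\}$'' inverts the direction: decreasing $\delta$ enlarges the bad event and only worsens the estimate. So the circular dependence $\varepsilon\mapsto\delta\mapsto\rho\mapsto N_0$ that you yourself identify as the obstacle is not actually resolved.

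The paper avoids this by not metering attempts with the residence time at all. Following Hwang--Sheu, it introduces alternating entry/exit times $\hat\theta_n,\bar\theta_n$ for $[\pi^{k-1}]_{\rho_1}$ and decomposes $\{\bar\tau_{\pi^k}\le\exp\{N(\hat V(\pi^k)+\delta)\}\}$ according to whether the exit happens before hitting $\pi^{k-1}$, at some $\bar\theta_n$, or at some $\hat\theta_n$; it then truncates the union at a \emph{fixed} integer $M$ (not scaling with $N$) and controls the tail $\{n>M\}$ by the probability of completing $M$ excursions within the budget and before exit, which is bounded by $\exp\{-NM\}$ by the argument of [Hwang--Sheu, Lemma~1.7, Part I]. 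Because $M$ is a constant, the union bound produces only the benign factor $M$, and the tail is killed by choosing $M$ large --- no residence-time tolerance enters, and the parameter order $\varepsilon\mapsto\delta\mapsto\rho\mapsto N_0$ closes cleanly. To repair your proof you would need either to prove that Lemma~\ref{lemma:hitting_time}'s $\delta$ can indeed be taken arbitrarily small at any prescribed exceptional exponent (which is stronger than what the lemma states and would require re-opening its proof), or to replace the $\exp\{2N\delta\}$-attempt count with the fixed-$M$ truncation and the cited Hwang--Sheu excursion-counting estimate.
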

\begin{proof}
We proceed via the steps in the proof of~{\cite[Lemma~2.1, Part III]{hwang-sheu-90}}. Let $\pi^{k-1} \in \pi^k$ be a $(k-1)$-cycle such that $\tilde{V}(\pi^{k-1}) = \hat{V}(\pi^k)$. With $\rho_1>0$ to be chosen later, for each $n \geq 1$, define the minimum of $\bar{\tau}_{\pi^k}$ and successive entry and exit times from a $\rho_1$-neighbourhood of $\pi^{k-1}$ as follows:
\begin{align*}
\hat{\theta}_0 & \coloneqq \inf\{ t >0 : \mu_N(t) \in [\pi^{k-1}]_{\rho_1}\} \wedge \bar{\tau}_{\pi^k}, \\
\bar{\theta}_n & \coloneqq \inf\{t > \hat{\theta}_{n-1}: \mu_N(t) \in [L \setminus \pi^{k-1}]_{\rho_1} \} \wedge \bar{\tau}_{\pi^k} , \\
\hat{\theta}_{n+1}&  \coloneqq \inf\{t > \bar{\theta}_n : \mu_N(t) \in  [\pi^{k-1}]_{\rho_1} \} \wedge \bar{\tau}_{\pi^k}.
\end{align*}
With $\delta > 0$ to be chosen later, using the strong Markov property, for any $\nu \in [\pi^k]_{\rho_1} \cap M_1^N(\mathcal{Z})$, we have
\begin{align}
P_{\nu}&(\bar{\tau}_{\pi^k}   \leq \exp\{N(\hat{V}(\pi^k)+\delta)\})  = P_\nu (\hat{\theta}_0 = \bar{\tau}_{\pi^k}, \bar{\tau}_{\pi^k} \leq \exp\{N(\hat{V}(\pi^k)+\delta)\}) \nonumber \\
 + & P_\nu \left(\hat{\theta}_0 < \bar{\tau}_{\pi^k}, \bigcup_{n \geq 1} \left\{\bar{\tau}_{\pi^k} = \bar{\theta}_n, \bar{\tau}_{\pi^k} \leq \exp\{N(\hat{V}(\pi^k)+\delta)\}, \bar{\tau}_{\pi^k} \geq \hat{\theta}_{n-1} \right\} \right) \nonumber \\
+ & P_\nu \left(\hat{\theta}_0 < \bar{\tau}_{\pi^k}, \bigcup_{n\geq 1} \left\{\bar{\tau}_{\pi^k} = \hat{\theta}_n, \bar{\tau}_{\pi^k} \leq \exp\{N(\hat{V}(\pi^k)+\delta)\}, \bar{\tau}_{\pi^k} \geq \bar{\theta}_n \right\}  \right).
\label{eqn:temp10}
\end{align}
We now upper bound each of the terms in~\ref{eqn:temp10}.Consider the first term. It can be shown using Corollary~\ref{cor:hs111} and~\cite[Corollary~A.6,~Appendix]{hwang-sheu-90} that, there exist $\rho_1>0$ and $\delta > 0$ such that for any $\nu \in [\pi^k]_{\rho_1}$ and sufficiently large $N$, we have
\begin{align*}
P_\nu(\hat{\theta}_0 = \bar{\tau}_{\pi^k}) \leq \exp\{-N(\tilde{V}(\pi^k) - \hat{V}(\pi^k) - \varepsilon)\}.
\end{align*}

Consider the second term in~\ref{eqn:temp10}. For any $\nu_1 \in [\pi^{k-1}]_{\rho_1} \cap M_1^N(\mathcal{Z})$, the probability of the unionised event can be upper bounded by
\begin{align*}
P_{\nu_1} & \left(\bigcup_{n \geq 1}\left\{\bar{\tau}_{\pi^k} = \bar{\theta}_n, \bar{\tau}_{\pi^k} \leq \exp\{N(\hat{V}(\pi^k)+\delta)\}, \bar{\tau}_{\pi^k} \geq \hat{\theta}_{n-1} \right\} \right)\\
& \leq P_{\nu_1} \left(\bigcup_{n=1}^M \left\{\bar{\tau}_{\pi^k} = \bar{\theta}_n, \bar{\tau}_{\pi^k} \leq \exp\{N(\hat{V}(\pi^k)+\delta)\}, \bar{\tau}_{\pi^k} \geq \hat{\theta}_{n-1} \right\} \right) \\
& \, \, \, \, + P_{\nu_1} \left( \bigcup_{n \geq M+1}  \left\{\bar{\tau}_{\pi^k} = \bar{\theta}_n, \bar{\tau}_{\pi^k} \leq \exp\{N(\hat{V}(\pi^k)+\delta)\}, \bar{\tau}_{\pi^k} \geq \hat{\theta}_{n-1} \right\} \right) \\
& \leq P_{\nu_1} (\bar{\tau}_{\pi^k} = \bar{\theta}_n \text{ and } \bar{\tau}_{\pi^k} \geq \hat{\theta}_{n-1} \text{ for some } n \leq M) \\
&  \, \, \, \, + P_{\nu_1} (\hat{\theta}_M \leq \exp\{N(\hat{V}(\pi^k)+\delta)\} \text{ and } \hat{\theta}_M \leq \bar{\tau}_{\pi^k})\\
& \leq P_{\nu_1}(\hat{\theta}_M = \bar{\tau}_{\pi^k}) + P_{\nu_1} (\hat{\theta}_M \leq \exp\{N(\hat{V}(\pi^k)+\delta)\} \text{ and } \hat{\theta}_M \leq \bar{\tau}_{\pi^k}).
\end{align*}
Again, the first term above can be bounded by
\begin{align*}
P_{\nu_1}(\hat{\theta}_M \leq \bar{\tau}_{\pi^k}) \leq \exp\{-N(\tilde{V}(\pi^k) - \hat{V}(\pi^k) - \varepsilon)\},
\end{align*}
for all $\nu_1 \in [\pi^{k-1}]_{\rho_1} \cap M_1^N(\mathcal{Z})$ and sufficiently large $N$.  The second term can be bounded by $\exp\{-NM\}$ for large enough $M$, by the same argument used in the proof of~\cite[Lemma~1.7,~Part~I]{hwang-sheu-90}. Choosing $M$ sufficiently large, the above implies that the second term in~(\ref{eqn:temp10}) is bounded by $\exp\{-N(\tilde{V}(\pi^k) - \hat{V}(\pi^k) - \varepsilon)\}$. A similar argument gives the same bound for the third term in~(\ref{eqn:temp10}).
\end{proof}
\subsection{LDP for the invariant measure}
Using the estimates~(\ref{eqn:tpm_zn}) of the transition probabilities of the discrete time Markov chain $Z^N$, we can study large deviations for the process $\mu_N$ in the stationary regime. Recall that $\wp_N$ denotes the unique invariant probability measure of the process $\mu_N$. We state the following result:
\begin{theorem}[{\cite[Theorem~2.2]{borkar-sundaresan-12}}]
Assume~\ref{assm:a1},~\ref{assm:a2} and~\ref{assm:b1}. Then, the sequence of invariant measures $\{\wp_N\}_{N \geq 1}$ satisfies the large deviation principle on $M_1(\mathcal{Z})$ with good rate function $s$ given by
\begin{align}
s(\xi) = \min_{1 \leq i \leq l} \{W(i) + V(K_i, \xi)\} - \min_{1 \leq j \leq l} W(j),
\label{eqn:s_expression},
\end{align}
where
\begin{align*}
W(i) = \min_{g \in G(i)} \sum_{(m,n) \in g} \tilde{V}(m,n).
\end{align*}
\label{thm:invariant-measure-gase}
\end{theorem}

The form of the rate function $s$ in Theorem~\ref{thm:invariant-measure-gase} is also related to the form of the invariant measure in the context of Markov chains on finite state spaces whose transition kernels are of the form~(\ref{eqn:tpm_zn}); see, for example,~\cite[Section~1.1]{delmoral-miclo-99}. Also, see~\cite{bodineau-giacomin-04} for an analogous result in a boundary driven symmetric simple exclusion process, which involves the study of the LDP for the invariant measure in an infinite dimensional setting. However, our focus is on sharp estimates on the rate of convergence to the invariant measure which is the subject of the next section.

\subsection{Convergence to the invariant measure}
\label{subsection:convergence-invariant-measure}
In this section, we prove our first main result on the time required for the convergence of $\mu_N$ to its invariant measure. 

Let $i_0 \in L$ be such that $ \min\{\tilde{V}(g): g \in G(i_0)\} = \min\{\tilde{V}(g): g \in G(i), i \in L \} $. We anticipate that $K_{i_0}$ is one of the most stable $\omega$-limit sets (among possibly others) for the dynamics~\eqref{eqn:MVE}. This is because Theorem~\ref{thm:invariant-measure-gase} tells us that the rate function that governs the LDP for $\{\wp_N\}_{N \geq 1}$ vanishes on $K_{i_0}$. Hence, for a large but fixed $N$, over large time intervals, one expects that there is positive probability (in the exponential scale) for the process $\mu_N$ to be in a small neighbourhood of $K_{i_0}$.

Define
\begin{align}
\Lambda \coloneqq \min\{\tilde{V}(g): g \in G(i), i \in L \} - \min\{\tilde{V}(g): g \in G(i, j), i, j \in L, i \neq j\}.
\label{eqn:lambda-def}
\end{align}
Let $P_T(\nu, \cdot ) = P_{\nu}(\mu_N(T)  \in \cdot )$ denote the transition probability kernel associated with the process $\mu_N$. Note that we suppress the dependence on $N$ for ease of readability. We first show  a lower bound for the transition probability $P_T(\nu_1, K_{i_0} ) $ of reaching a small neighbourhood of $K_{i_0}$ when $T$ is of the order $\exp\{N(\Lambda-\delta_0)\}$ for some $\delta_0>0$.

\begin{theorem}
Given $\varepsilon >0$, there exist $\delta_0 > 0$, $\rho > 0$ and $N_0 \geq 1$ such that for all $\rho_1 \leq \rho$, $N \geq N_0$, $\nu \in M_1^N(\mathcal{Z})$, we have
\begin{align}
P_{T_0}(\nu, \gamma_{i_0}) \geq \exp\{-N\varepsilon\},
\label{eqn:lb_tpm_i0}
\end{align}
where $T_0 =\exp\{N(\Lambda - \delta_0)\}$. Furthermore, there exist $\nu_0 \in M_1(\mathcal{Z})$ and $\beta >0$ such that for all $N \geq N_0$ and $\nu \in [\nu_0]_{\rho_1} \cap M_1^N(\mathcal{Z})$
\begin{align}
P_{T_0}(\nu, \gamma_{i_0}) \leq \exp\{-N\beta\}.
\label{eqn:ub_tpm}
\end{align}
\label{thm:mixing}
\end{theorem}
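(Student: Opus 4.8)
The plan is to prove the lower bound \eqref{eqn:lb_tpm_i0} by constructing, for an arbitrary starting point $\nu \in M_1^N(\mathcal{Z})$, an explicit ``route'' through the hierarchy of cycles that leads to $\gamma_{i_0}$ within time $T_0$, and lower-bounding the probability of following this route by splicing together the per-cycle estimates of Lemmas \ref{lemma:hitting_place} and \ref{lemma:hitting_time} using the strong Markov property. First I would observe that by Lemma \ref{lemma:fw19}/Corollary \ref{cor:fw19} the process reaches $\gamma$ (hence some $\gamma_i$) in $O(1)$ expected time, so with probability $\to 1$ we may assume $\mu_N$ has entered some $\gamma_i$ after a negligible time. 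Then I would use the cycle-of-cycles structure: starting from $\gamma_i$, the process moves ``downhill'' in the hierarchy — at each level $k$, from a $k$-cycle $\pi^k$ not containing $i_0$, the arrow $\pi^k \to \pi^k{}'$ (or a directed chain of such arrows) eventually leads to the cycle containing $i_0$, because $i_0$ attains the global minimum of $\min\{\tilde V(g): g \in G(i)\}$, which forces $K_{i_0}$ to lie in the top cycle $L_m$ (the final singleton). Along such a chain $\pi^k \to \pi^k{}'$, Lemma \ref{lemma:hitting_place} gives that with probability at least $\exp\{-N\varepsilon'\}$ the exit from $\pi^k$ happens before time $\exp\{N(\tilde V(\pi^k)-\delta)\}$ and lands in $\gamma_{\pi^k{}'}$. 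Concatenating at most $|L|$ such steps (the hierarchy has bounded depth, each level has boundedly many cycles), the strong Markov property multiplies these probabilities, giving a bound $\exp\{-N\varepsilon\}$ for the whole passage, while the total time is dominated by the largest $\exp\{N\tilde V(\pi^k)\}$ encountered, which is at most $\exp\{N(\Lambda-\delta_0)\}$ for suitable $\delta_0>0$ once one checks that $\Lambda$ equals (or dominates) the relevant cycle costs along the route. The identification of $\Lambda$ with $\max_k \tilde V(\pi^k)$ along the optimal descent to $i_0$ — using the graph-theoretic formulas for $W(i)$, $I_i(W)$ and $\tilde V(\pi^k)$ — is the bookkeeping heart of this half, and is exactly the content paralleling \cite[Part~I]{hwang-sheu-90}; I would invoke the combinatorial identities already cited (e.g. \cite[Corollary~A.4,~A.6]{hwang-sheu-90}) rather than rederive them.

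For the upper bound \eqref{eqn:ub_tpm}, the idea is dual: pick $\nu_0$ to lie in (a neighbourhood of) a compact set $K_{i_1}$ with $i_1 \neq i_0$ that is itself ``most stable'' in the sense that it attains the minimum defining $\Lambda$ from the other side, or more precisely a set for which leaving its containing cycle to reach $\gamma_{i_0}$ costs strictly more than $\Lambda - \delta_0$. Concretely, if $\Lambda>0$ there exists a cycle $\pi^{k_1}$ with $K_{i_0} \notin \pi^{k_1}$ and $\tilde V(\pi^{k_1})$ strictly larger than $\Lambda - \delta_0$ — choosing $\delta_0$ small enough relative to $\tilde V(\pi^{k_1}) - (\text{something})$ — so that by the lower estimate in Lemma \ref{lemma:hitting_time}, starting from $\gamma_{\pi^{k_1}}$ the process does not exit $\pi^{k_1}$ before time $\exp\{N(\tilde V(\pi^{k_1})-\delta)\} > T_0$ except with probability $\le \exp\{-N\varepsilon\}$. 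Since reaching $\gamma_{i_0}$ requires first exiting $\pi^{k_1}$ (as $i_0 \notin \pi^{k_1}$), this gives $P_{T_0}(\nu,\gamma_{i_0}) \le \exp\{-N\beta\}$ for $\nu$ near $\nu_0 := $ any point of $K_{i_1}$. The case $\Lambda = 0$ is degenerate and presumably excluded or handled trivially (if $\Lambda=0$ the upper bound claim may be vacuous or one takes $\beta$ accordingly); I would note this.

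The main obstacle I anticipate is the first half: verifying that the time budget along the constructed descent route to $i_0$ never exceeds $\exp\{N(\Lambda-\delta_0)\}$. This requires showing that every cycle $\pi^k$ one passes through on the way to $i_0$ satisfies $\tilde V(\pi^k) \le \Lambda$ — equivalently that $\Lambda$ is precisely the cost of the ``hardest'' transition needed to reach the global minimum from anywhere — which is a nontrivial consequence of the definition $\Lambda = \min_i \min_{g \in G(i)}\tilde V(g) - \min_{i\neq j}\min_{g\in G(i,j)}\tilde V(g)$ together with the cycle hierarchy identities. One must also handle the passage through cycles $\pi^k$ with $\tilde V(\pi^k)$ possibly equal to $\Lambda$, where a small $\delta$ in Lemma \ref{lemma:hitting_place} must be traded against the chosen $\delta_0$; since there are only finitely many cycles this is a finite optimisation, but it needs care. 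A secondary technical point is ensuring the $O(1)$-time initial transient to enter $\gamma$ and the finitely many $\gamma$-to-$\gamma$ passages do not spoil the time estimate — this follows because each contributes at most $\exp\{N\varepsilon/(2|L|)\}$ by Lemma \ref{lemma:hs13}, negligible on the $\exp\{N(\Lambda-\delta_0)\}$ scale. I would structure the write-up as: (1) reduce to starting in some $\gamma_i$; (2) build the descent chain of cycles and bound each segment's probability and time; (3) multiply via strong Markov; (4) choose $\delta_0,\rho,N_0$; (5) prove the upper bound as above.
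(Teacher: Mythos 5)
Your sketch correctly identifies the ingredients---Lemma~\ref{lemma:hitting_place}, Lemma~\ref{lemma:hitting_time}, the Hwang--Sheu combinatorial identities, and the strong Markov property---and your upper bound argument is essentially the paper's: pick a cycle $\pi^k$ with $\tilde V(\pi^k)=\Lambda$ and $K_{i_0}\notin\pi^k$ (such a $\pi^k$ exists since $\Lambda=\max_{0\le k\le m}V_k$ with $V_k=\max\{\tilde V(\pi^k):\pi^k\subset\pi_0^{k+1},\,\pi^k\neq\pi_0^k\}$), and apply the lower half of Lemma~\ref{lemma:hitting_time}. But there is a genuine gap in your lower bound.

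What you lower bound is the probability of \emph{hitting} $\gamma_{i_0}$ by time $T_0$, i.e.\ $P_\nu(\hat\tau_{\{i_0\}}\le T_0)$. The theorem requires a lower bound on $P_\nu(\mu_N(T_0)\in\gamma_{i_0})$, the probability of being in $\gamma_{i_0}$ at the \emph{fixed terminal} time $T_0$. These are not interchangeable: once the process first reaches $\gamma_{i_0}$ it may well wander away before time $T_0$, and $T_0$ is exponentially large. Your plan never converts the hitting-time estimate into a fixed-time estimate, and there is no cheap conversion --- after reaching $\gamma_{i_0}$ at an uncontrolled random time $\tau$, controlling where the process is an exponentially long time later is exactly the original problem again.

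The paper's actual proof resolves this by a recursion across the cycle hierarchy that simultaneously \emph{reduces the time scale}. Step one is your chain argument: starting from any $\gamma_j$, one reaches $[\pi_0^m]_\rho$ (the top-level cycle containing $i_0$) by time $T_m=\exp\{N(V_m-\delta)\}$ with probability $\ge\exp\{-N\varepsilon\}$. But then, rather than proceeding by another chain, the proof conditions on the event $\{\bar\tau_{\pi_0^m}>T\}$ (high probability by Lemma~\ref{lemma:hitting_time}, since $\Lambda\le\tilde V(\pi_0^m)$) and uses Lemma~\ref{lemma:fw19} to find a stopping time $\theta\in[t-T_{m-1},\,t-T_{m-1}+T^*]$ at which the process is back in $[\pi_0^m]_\rho$, where $T^*$ is a constant. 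This reduces the problem to the \emph{same} question one level down, over the smaller time window $[T_{m-1}-T^*,T_{m-1}]$, and one iterates until the residual time is $O(1)$, where the LDP applies directly. This ``reach, stay, return, reduce the clock'' recursion is the engine that produces a bound at a fixed time; it also automatically keeps the process inside $\pi_0^{k+1}$ at each level, so the only cycle costs ever encountered are $V_k\le\Lambda$. Your one-shot route, by contrast, could pass through cycles outside $\pi_0^{k+1}$ whose $\tilde V$ is not controlled by $\Lambda$, and you correctly flag that as the ``main obstacle'' --- but the resolution is the recursion, not a separate combinatorial lemma. To repair your write-up you would need to replace item (3) of your outline by this recursive structure.

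Minor: $L_{m+1}$, not $L_m$, is the terminal singleton; $K_{i_0}$ sits in some $\pi_0^m\in L_m$, not ``in the top cycle $L_m$.''
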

\begin{proof}
We follow the steps in Hwang and Sheu~{\cite[Part I, Theorem~2.3]{hwang-sheu-90}}. With $\rho > 0$ to be chosen later, we first show that~(\ref{eqn:lb_tpm_i0}) holds for all $\nu \in \gamma \cap M_1^N(\mathcal{Z})$. Towards this, let $m$ be the smallest integer such that $L_{m+1}$ is a singleton. For $0 \leq k \leq m$, let $\pi_0^k \in L_k$  be the $k$-cycle containing $i_0$. Let $V_k = \max \{\tilde{V}(\pi^k) : \pi^k \subset \pi_0^{k+1}, \pi^k \neq \pi_0^k\}$. Using~\cite[Lemma~A.10,~Appendix]{hwang-sheu-90}, we have $\Lambda = \max\{V_k: 0 \leq k \leq m\}$.

Fix $j \in L$ and consider $\nu \in [K_j]_{\rho}$. Let $\pi_1^m \in L_m$ be such that $K_j \in \pi_1^m$. If $\pi_1^m \neq \pi_0^m$, then we have $\pi_1^m \Rightarrow \pi_0^m$, that is,  there exists $\pi_2^m, \pi_3^m, \ldots, \pi_n^m =\pi_0^m, n \leq l$ such that $\pi_1^m \to \pi_2^m \to \pi_3^m \to \cdots \to \pi_n^m = \pi_0^m$. Therefore, with $\delta$ to be chosen later, by the strong Markov property (we use the standard notation $E_\nu(A; B)$ for $E_\nu(1_A 1_B)$ where $A$ and $B$ are measurable sets),
\begin{align*}
P_\nu (\hat{\tau}_{\pi^m_0} & \leq n \exp\{N(V_m - \delta)\})  \\
& \geq E_\nu ( \bar{\tau}_{\pi^m_1} \leq  \exp\{N(V_m -\delta )\}, \mu_N(\bar{\tau}_{\pi^m_1}) \in \pi_2^m;  \\
& \, \, \, \, \, \, \, \, E_{\mu_N(\bar{\tau}_{\pi^m_1})}(\bar{\tau}_{\pi_2^m} \leq  \exp\{N(V_m -\delta )\},\mu_N(\bar{\tau}_{\pi^m_2}) \in \pi_3^m; \\
& \, \, \, \, \, \, \, \, \cdots  E_{\mu_N(\bar{\tau}_{\pi^m_{n-2}})}( \bar{\tau}_{\pi^m_{n-1}} \leq  \exp\{N(V_m -\delta )\},\mu_N(\bar{\tau}_{\pi^m_{n-1}}) \in \pi_0^m)\\
& \, \, \, \, \, \, \, \,  \cdots )).
\end{align*}
Since $V(\pi_i^m) \leq V_m$ for all $1 \leq i \leq n$, the above becomes

\begin{align*}
P_\nu (\hat{\tau}_{\pi^m_0} & \leq n \exp\{N(V_m - \delta)\}) \\
&  \geq E_\nu ( \bar{\tau}_{\pi^m_1} \leq  \exp\{N(\tilde{V}(\pi_1^m) -\delta )\}, \mu_N(\bar{\tau}_{\pi^m_1}) \in \pi_2^m;  \\
& \, \, \, \, \, \, \, \, E_{\mu_N(\bar{\tau}_{\pi^m_1})}(\bar{\tau}_{\pi_2^m} \leq  \exp\{N(\tilde{V}(\pi_2^m)-\delta )\},\mu_N(\bar{\tau}_{\pi^m_2}) \in \pi_3^m; \\
& \, \, \, \, \, \, \, \, \cdots  E_{\mu_N(\bar{\tau}_{\pi^m_{n-2}})}( \bar{\tau}_{\pi^m_{n-1}} \leq  \exp\{N(\tilde{V}(\pi_{n-1}^m) -\delta )\},\mu_N(\bar{\tau}_{\pi^m_{n-1}}) \in \pi_0^m)\\
& \, \, \, \, \, \, \, \, \cdots )).
\end{align*}
By Lemma~\ref{lemma:hitting_place}, there exist $\rho >0$, $\delta > 0$ and $N_0 \geq 1 $ such that each of the above probabilities is at least $\exp\{-N\varepsilon/l\}$ for sufficiently large $N$, i.e. we have
\begin{align*}
P_\nu (\hat{\tau}_{\pi^m_0} \leq n \exp\{N(V_m - \delta))\}) \geq \exp\{-N n \varepsilon/l\} \geq \exp\{-N\varepsilon\},
\end{align*}
On the other hand, if $K_j$ is such that $K_j \in \pi_0^m$, the above holds trivially. Therefore, there exist $\delta_1 > 0$ and $N_1 \geq 1$ such that	for all $\nu \in \gamma \cap M_1^N(\mathcal{Z})$ and $N \geq N_1$,  we have
\begin{align*}
P_\nu (\hat{\tau}_{\pi^m_0} \leq \exp\{N(V_m - \delta_1)\})  \geq \exp\{-N  \varepsilon\}.
\end{align*}
We now use the above bound to show~(\ref{eqn:lb_tpm_i0}). Let $T = \exp\{N(\Lambda - \delta_1)\}, T_m = \exp\{N(V_m - \delta_1)\}$ and $T_{m-1} =\exp\{N(V_{m-1}-\delta_1)\}$. Then, for any $\nu \in \gamma \cap M_1^N(\mathcal{Z})$ and $N \geq N_1$, we have
\begin{align}
P_\nu (\mu_N(T) \in \gamma_{i_0})& \geq E_\nu (\hat{\tau}_{\pi^m_0} \leq T_m; E_{\mu_N(\hat{\tau}_{\pi^m_0})}(\mu_N(T-\hat{\tau}_{\pi^m_0}) \in \gamma_{i_0})) \nonumber \\
& \geq \inf_{\substack{\nu \in [\pi_0^m]_{\rho}\cap M_1^N(\mathcal{Z}) \\  T-T_m \leq t \leq T}} P_\nu(\mu_N(t) \in \gamma_{i_0}) P_\nu (\hat{\tau}_{\pi^m_0}\leq T_m) \nonumber \\
& \geq  \inf_{\substack{\nu \in [\pi_0^m]_{\rho}\cap M_1^N(\mathcal{Z}) \\  T-T_m \leq t \leq T}} P_\nu(\mu_N(t) \in \gamma_{i_0})  \exp\{-N\varepsilon\}.
\label{eqn:temp1}
\end{align}
To get a lower bound for the above infimum, fix $\nu \in [\pi_0^m]_\rho \cap M_1^N(\mathcal{Z})$ and $T-T_{m} \leq t \leq T$. Define the stopping time $\theta \coloneqq \inf\{s> t-T_{m-1} : \mu_N(s) \in [\pi_0^m]_{\rho}\}$. Then, for a large $T^*$ (not depending on $N$) to be chosen later, we have
\begin{align}
P_\nu &(\mu_N(t) \in \gamma_{i_0}) \nonumber \\
 &\geq E_\nu (\theta \leq t-T_{m-1}+T^* , \bar{\tau}_{\pi_0^m} > T;  E_{\mu_N(\theta)}(\mu_N(t-\theta)) \in \gamma_{i_0})  \nonumber \\
& \geq P_\nu(\theta \leq t-T_{m-1}+T^* , \bar{\tau}_{\pi_0^m} > T)  \inf_{\substack{\nu^\prime \in [\pi_0^m]_{\rho}\cap M_1^N(\mathcal{Z}) \\ T_{m-1}-T^*\leq t \leq T_{m-1}}} P_{\nu^\prime} (\mu_N(t) \in \gamma_{i_0}).
\label{eqn:temp2}
\end{align}
Note that
\begin{align*}
P_\nu(\theta \leq t-T_{m-1}+T^*, \bar{\tau}_{\pi_0^m} > T)= P_\nu(\bar{\tau}_{\pi^m_0} > T) - P_\nu(\theta > t-T_{m-1}+T^* , \bar{\tau}_{\pi_0^m} > T).
\end{align*}
By Lemma~\ref{lemma:hitting_time}, since $\Lambda \leq \tilde{V}(\pi^m_0)$, we have
\begin{align*}
P_\nu(\bar{\tau}_{\pi^m_0} > T) \geq P_\nu(\bar{\tau}_{\pi^m_0} > \exp\{N(\tilde{V}(\pi^m_0) - \delta)\}) \to 1
\end{align*}
as $N \to \infty$. For the second term, note that
\begin{align*}
P_\nu& (\theta > t-T_{m-1}+T^* , \bar{\tau}_{\pi_0^m} > T) & \\
& = P_\nu (\mu_N(s) \notin [\pi_0^m]_\rho \text{ for all } t-T_{m-1}\leq s \leq t-T_{m-1}+T^*, \bar{\tau}_{\pi_0^m} > T) \\
& = P_\nu (\mu_N(s) \notin \gamma \text{ for all } t-T_{m-1}\leq s \leq t-T_{m-1}+T^*, \bar{\tau}_{\pi_0^m} > T) \\
& \leq P_\nu (\mu_N(s) \notin \gamma \text{ for all } t-T_{m-1}\leq s \leq t-T_{m-1}+T^*).
\end{align*}
The second equality follows since $\mu_N(s) \notin [\pi_0^m]_\rho$ and $\bar{\tau}_{\pi_0^m} >  T$ implies that we have exited $[\pi_0^m]_\rho$ and we have not yet entered a neighbourhood of any other attractor, which is the same as saying $\mu_N(t) \notin \gamma$ and $\bar{\tau}_{\pi_0^m} >T$. By the Markov property, the above probability equals
\begin{align*}
E_\nu  \left( E_{\mu_N(t-T_{m-1})}(\mu_N(s) \notin \gamma \text{ for all } s \in [t-T_{m-1}, t-T_{m-1}+T^*]) \right) \leq \sup_{\nu^\prime \in F} P_{\nu^\prime} (\tau_F \geq T^*),
\end{align*}
where $F = M_1(\mathcal{Z}) \setminus \gamma$. By Lemma~\ref{lemma:fw19}, $T^*$ can be chosen large enough (not depending on $N$) that the above probability is at most $1/2$. Therefore,~(\ref{eqn:temp2}) becomes
\begin{align*}
\inf_{\substack{\nu \in [\pi_0^m]_\rho \cap M_1^N(\mathcal{Z}) \\ T-T_m \leq t \leq T}} P_\nu(\mu_N(t) \in \gamma_{i_0}) \geq \frac{1}{2}   \inf_{\substack{\nu^\prime \in [\pi_0^m]_{\rho} \cap M_1^N(\mathcal{Z}) \\ T_{m-1}-T^* \leq t \leq T_{m-1}}} P_{\nu^\prime} (\mu_N(t) \in \gamma_{i_0}),
\end{align*}
and~(\ref{eqn:temp1}) becomes
\begin{align*}
P_\nu (\mu_N(T) \in \gamma_{i_0}) \geq \frac{1}{2} \exp\{-N\varepsilon\} \inf_{\substack{\nu^\prime \in [\pi_0^m]_{\rho} \cap M_1^N(\mathcal{Z}) \\  T_{m-1}-T^* \leq t \leq T_{m-1}}} P_{\nu^\prime} (\mu_N(t) \in \gamma_{i_0}),
\end{align*}
for sufficiently large $N$ and $\nu \in \gamma \cap M_1^N(\mathcal{Z})$. Repeating the above argument $m$ times, we see that there exists $N_2 \geq 1$ such that for all $\nu \in \gamma$ and $N \geq N_2$, we have
\begin{align*}
P_\nu(\mu_N(T) \in \gamma_{i_0}) & \geq \left(\frac{1}{2}\right)^m\exp\{-Nm\varepsilon\}  \inf_{\substack{\nu^\prime \in [\pi_0^1]_{\rho}\cap M_1^N(\mathcal{Z}) \\ T_0 -T^*\leq t \leq T_0}} P_{\nu^\prime} (\mu_N(t) \in \gamma_{i_0}) \\
& \geq \left(\frac{1}{2}\right)^{m}\exp\{-N(m+1)\varepsilon\} \inf_{\substack{\nu^\prime \in [K_0]_{\rho} \cap M_1^N(\mathcal{Z})\\ T_0 -T^*\leq t \leq T_0}} P_{\nu^\prime} (\mu_N(t) \in \gamma_{i_0}) \\
& \geq \left(\frac{1}{2}\right)^{m+1}\exp\{-N(m+1)\varepsilon\},
\end{align*}
where $T_0 = \exp\{N(V_0 - m\delta)\}$. Thus, we conclude that there is $N_3 \geq 1$, $\delta_3>0$ and $\rho > 0$ such that for all $\nu \in \gamma \cap M_1^N(\mathcal{Z})$ and $N \geq N_3$, we have
\begin{align*}
P_\nu(\mu_N(T) \in \gamma_{i_0})  \geq \exp\{-N(m+3)\varepsilon\},
\end{align*}
where $T = \exp\{N(\Lambda-\delta_3)\}$. This establishes~(\ref{eqn:lb_tpm_i0}) for all $\nu \in \gamma \cap M_1^N(\mathcal{Z})$. For any $\nu \in M_1^N(\mathcal{Z})\setminus \gamma$, from Lemma~\ref{lemma:fw19}, there exists $T^\prime$ large enough and $N_4 \geq N_3$ such that $P_\nu(\tau_{M_1(\mathcal{Z}) \setminus \gamma} \leq T^\prime) \leq \frac{1}{2}$ for all $N \geq N_4$. Therefore, we have
\begin{align*}
P_\nu(\mu_N(T) \in \gamma_{i_0}) & \geq E_\nu (\tau_{M_1(\mathcal{Z}) \setminus \gamma} \leq T^\prime, P_{\mu_N(\tau_F)} (\mu_N(T-T^\prime) \in \gamma_{i_0})) \\
& \geq \frac{1}{2}	\inf_{\nu^\prime  \in \gamma} P_{\nu^\prime}(\mu_N(T-T^\prime) \in \gamma_{i_0}) \\
& \geq \frac{1}{2}\exp\{-N(m+3)\varepsilon\}.
\end{align*}
Thus, we have established~(\ref{eqn:lb_tpm_i0}) for any $\nu \in M_1^N(\mathcal{Z})$.

We now turn to~(\ref{eqn:ub_tpm}). Since $\Lambda = \max\{V_k, 0 \leq k \leq m\}$, there exists a $k$ such that $V_k = \Lambda$. From the definition of $V_k$, we see that there exists $\pi^k \in L_k$ such that
\begin{align*}
\tilde{V}(\pi^k) = \Lambda, \pi^k \subset \pi_0^{k+1}, \text{ and }\pi^k \neq \pi^k_0.
\end{align*}
where $\pi_0^{k+1}$ is the $(k+1)$-cycle that contain $K_{i_0}$. Therefore, Lemma~\ref{lemma:hitting_time} implies that, for some $\beta > 0$, for some $\delta_4 < \delta_3$ and an appropriately chosen $\rho > 0$, with $T = \exp\{N(\Lambda-\delta_3)\} = \exp\{N(\tilde{V}(\pi^k) - \delta_3)\}$, we have
\begin{align*}
P_\nu(\mu_N(T) \in \gamma_{i_0})  \leq P_\nu(\bar{\tau}_{\pi^k} \leq T) \leq \exp\{-N\beta\},
\end{align*}
for any $\nu \in [\pi^k]_{\rho} \cap M_1^N(\mathcal{Z})$ and sufficiently large $N$. This completes the proof of the theorem.
\end{proof}
The above theorem immediately gives a lower bound on $P_T(\nu, \xi)$ for any $\xi$ in a small neighbourhood of  $K_{i_0}$, over time durations of order $\exp\{N(\Lambda-\delta)\}$ for some $\delta > 0$. Let us make this precise.
\begin{corollary}
Under the conditions of Theorem~\ref{thm:mixing}, for all $\nu \in M_1^N(\mathcal{Z})$, $\xi \in \gamma_{i_0} \cap M_1^N(\mathcal{Z})$ and $N$ sufficiently large, we have
\begin{align*}
P_{T_0}(\nu, \xi) \geq \exp\{-2N\varepsilon\}.
\end{align*}
\label{cor:lowerboundtransition}
\end{corollary}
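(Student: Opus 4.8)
The plan is to deduce the corollary from Theorem~\ref{thm:mixing} by a single extra application of the strong Markov property, splitting the time interval $[0,T_0]$ into a first stretch that brings $\mu_N$ into $\gamma_{i_0}$ and a short final stretch during which the process moves to the specific point $\xi \in \gamma_{i_0} \cap M_1^N(\mathcal{Z})$. Concretely, write $T_0 = \exp\{N(\Lambda - \delta_0)\}$ as in Theorem~\ref{thm:mixing} and pick a fixed time $t_1 > 0$ (not depending on $N$), so that $T_0 - t_1 > 0$ for $N$ large. By the Chapman--Kolmogorov equation,
\begin{align*}
P_{T_0}(\nu,\xi) \geq \sum_{\eta \in \gamma_{i_0} \cap M_1^N(\mathcal{Z})} P_{T_0 - t_1}(\nu, \eta)\, P_{t_1}(\eta, \xi) \geq \Big( \inf_{\eta \in \gamma_{i_0} \cap M_1^N(\mathcal{Z})} P_{t_1}(\eta,\xi) \Big) P_{T_0 - t_1}(\nu, \gamma_{i_0}).
\end{align*}
Since $T_0 - t_1 = \exp\{N(\Lambda - \delta_0)\} - t_1$ is still of the form $\exp\{N(\Lambda - \delta_0')\}$ up to lower-order corrections, Theorem~\ref{thm:mixing} gives $P_{T_0 - t_1}(\nu, \gamma_{i_0}) \geq \exp\{-N\varepsilon\}$ for all $\nu \in M_1^N(\mathcal{Z})$ and $N$ large (one may need to absorb the shift by slightly enlarging $\delta_0$ or $\varepsilon$, which is harmless). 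It then remains to show that the short-time transition probability $P_{t_1}(\eta,\xi)$ between two points of $\gamma_{i_0} \cap M_1^N(\mathcal{Z})$ is at least $\exp\{-N\varepsilon\}$ uniformly in $\eta$, for a suitable fixed $t_1$.

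For that last ingredient I would use the finite-time large deviation lower bound, Corollary~\ref{cor:uniform_ldp}, applied to a small open ball around a fixed absolutely continuous path $\mu$ with $\mu(0)=\eta$, $\mu(t_1)=\xi$ that stays inside $\gamma_{i_0}$: since $K_{i_0}$ is one of the compact sets $K_i$ of assumption~\ref{assm:b1}, all points of $K_{i_0}$ are equivalent, and $\gamma_{i_0}$ is an open neighbourhood of $K_{i_0}$, one can connect any two points $\eta,\xi \in \gamma_{i_0}$ by a piecewise-constant-velocity path of bounded cost, the bound being uniform over $\eta,\xi$ by compactness and the joint continuity of $S_{t_1}(\cdot|\cdot)$ (Theorem~\ref{thm:ldp_terminal_time} and the continuity remark following it). Hence $\sup_{\eta} S_{t_1}(\xi|\eta) \leq M < \infty$ for some constant $M$ depending only on $t_1$ and $\rho_1$, so that by the uniform LDP lower bound $P_{t_1}(\eta,\xi) \geq \exp\{-N(M+\varepsilon)\}$ for $N$ large. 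Choosing $t_1$ (or rescaling) so that $M$ can be made smaller than $\varepsilon$ — or simply replacing $\varepsilon$ by a larger constant throughout, which is allowed since $\varepsilon>0$ is arbitrary in Theorem~\ref{thm:mixing} — yields $P_{t_1}(\eta,\xi)\geq \exp\{-N\varepsilon\}$.

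Combining the two pieces gives $P_{T_0}(\nu,\xi) \geq \exp\{-N\varepsilon\}\cdot\exp\{-N\varepsilon\} = \exp\{-2N\varepsilon\}$ for all $\nu \in M_1^N(\mathcal{Z})$, $\xi \in \gamma_{i_0}\cap M_1^N(\mathcal{Z})$, and $N$ large, which is exactly the claim. The only genuinely delicate point is the uniform lower bound on the short-time transition probability $P_{t_1}(\eta,\xi)$: one must verify that a connecting path of uniformly bounded cost exists, which rests on the continuity of the quasipotential/terminal-cost function and on $\gamma_{i_0}$ being an open neighbourhood of the $\sim$-equivalence class $K_{i_0}$, so that short excursions within $\gamma_{i_0}$ are cheap. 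Everything else is a routine bookkeeping of the exponential scales and an invocation of the strong (or just ordinary) Markov property already established for $\mu_N$.
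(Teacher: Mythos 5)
Your proposal is correct and essentially reproduces the paper's own proof: both split $P_{T_0}(\nu,\xi)$ via Chapman--Kolmogorov into a long stretch $[0,T_0-t]$ reaching $\gamma_{i_0}$ (bounded by Theorem~\ref{thm:mixing}) and a fixed-length stretch $[T_0-t,T_0]$ within $\gamma_{i_0}$ (bounded by the LDP lower bound combined with the fact that $S_t(\cdot|\cdot)$ is jointly continuous and vanishes on $K_{i_0}\times K_{i_0}$, so that $\sup_{\eta,\xi\in\gamma_{i_0}}S_t(\xi|\eta)\le\varepsilon/2$ once $t$ is large and $\rho_1$ small). The only genuinely delicate point you flag---the uniform lower bound on the short-time pointwise transition probability $P_t(\eta,\xi)$, which requires a local (singleton) version of the LDP lower bound on the lattice $M_1^N(\mathcal{Z})$---is asserted without further elaboration in the paper as well, so you are on equal footing there; your explicit remark about absorbing the shift $T_0\mapsto T_0-t$ into $\delta_0$ is a small but harmless extra care the paper leaves implicit.
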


\begin{proof}
Given $\varepsilon > 0$, let $\rho, N_0$ and $T_0$ be as in the statement of Theorem~\ref{thm:mixing}. Choose $t$ large enough (not depending on $N$) and $\rho^\prime < \rho$ such that for all $\rho_1 \leq \rho^\prime$ we have $S_t(\nu_1|\nu_2) \leq \varepsilon/2$ for all $\nu_1, \nu_2 \in \gamma_{i_0}$. This is possible by the joint continuity of  the rate function $S_t(\cdot|\cdot)$ and the fact that $V(\nu_1, \nu_2) = 0$ whenever $\nu_1, \nu_2 \in K_{i_0}$. Therefore, using the large deviation lower bound, there exists $N_2 \geq N_1$ such that
\begin{align*}
P_t(\nu_1, \nu_2) \geq \exp\{-N(S_t(\nu_2 | \nu_1) + \varepsilon/2)\} \geq \exp\{-N\varepsilon\},
\end{align*}
for all $\nu_1, \nu_2 \in \gamma_{i_0} \cap M_1^N(\mathcal{Z})$ and $N \geq N_2$. Therefore, by Theorem~\ref{thm:mixing}, for $\nu \in M_1^N(\mathcal{Z}), \xi \in \gamma_{i_0} \cap M_1^N(\mathcal{Z})$ and $N \geq N_2$, we have
\begin{align*}
P_{T_0}(\nu, \xi) & = \sum_{\nu_2 \in \gamma_{i_0} \cap M_1^N(\mathcal{Z})} P_{T_0-t}(\nu_1, \nu_2) P_t(\nu_2, \xi)  \\
&  \geq P_{T_0-t}(\nu_1, \gamma_{i_0}) \inf_{\nu_2 \in \gamma_{i_0} \cap M_1^N(\mathcal{Z})} P_t(\nu_2, \xi) \\
& \geq \exp\{-2N \varepsilon\}.
\end{align*}
\end{proof}
\subsubsection{Proofs of Theorem~\ref{thm:conv} and Theorem~\ref{thm:conv-converse}}
\label{subsection:thm-conv-proof}
We now prove our first main result (Theorem~\ref{thm:conv}) on the convergence of $\mu_N$ to the invariant measure and its converse Theorem~\ref{thm:conv-converse}. Theorem~\ref{thm:conv} together with Theorem~\ref{thm:conv-converse} shows that the constant $\Lambda$ is sharp (in the exponential scale) for the time required for $\mu_N$ to equilibrate.

Define $\tilde{L}_0 \coloneqq \{i \in L : W(K_i) = 0\}$, i.e, $\tilde{L}_0$ denotes the set of minimisers of the rate function $s$ (see~\ref{eqn:s_expression}). Let $B(M_1(\mathcal{Z}))$ denotes the space of bounded Borel-measurable functions on $M_1(\mathcal{Z})$.
\begin{proof}[Proof of Theorem~\ref{thm:conv}]
We follow the steps in Hwang and Sheu~{\cite[Part I, Theorem~2.5]{hwang-sheu-90}}. Let $\varepsilon > 0$, and let $T_0, \delta_0, \rho, \rho_1$ and $N_0 \geq1$ be as in the statement of Theorem~\ref{thm:mixing}.  Note that, for any $\nu \in M_1^N(\mathcal{Z})$, $\xi \notin [\tilde{L}_0]_{\rho_1}$ and for some fixed $t > 0$,
\begin{align*}
P_{T_0}(\nu,\xi)& = \sum_{\nu^\prime \in [K_{i_0}]} P_{T_0-t}(\nu, \nu^\prime) P_t(\nu^\prime, \xi)\\
& \geq \exp\{-2N\varepsilon\} \inf_{\nu^\prime \in [K_{i_0}]} P_t(\nu^\prime, \xi) \\
& \geq \exp\{-2N\varepsilon\} \exp\{-N\sup_{\nu^\prime \in [K_{i_0}]} S_t(\xi|\nu^\prime)\}
\end{align*}
where the first inequality follows from Corollary~\ref{cor:lowerboundtransition} and the second from the uniform LDP (Corollary~\ref{cor:uniform_ldp}). Hence, we can find a function $U : M_1(\mathcal{Z}) \to [0, \infty)$ such that $U(\xi) = 0$ for $\xi \in [\tilde{L}_0]_{\rho_1}$ and
\begin{align}
P_{T_0}(\nu,\xi) \geq c_N \exp\{-NU(\xi)\}
\label{eqn:temp8}
\end{align}
holds for all $\nu \in M_1^N(\mathcal{Z})$, $\xi \notin [\tilde{L}_0]_{\rho_1}$ and sufficiency large $N$; here $c_N$ is such that
\begin{align*}
\pi_N(\xi) = c_N\exp\{-NU(\xi)\}
\end{align*}
is a probability measure on $M_1^N(\mathcal{Z})$. Define $Q_{T_0}(\nu ,\cdot) \coloneqq P_{T_0}(\nu, \cdot)/ \pi(\cdot)$. We have, for any $\nu_1, \nu_2 \in M_1^N(\mathcal{Z})$ and sufficiently large $N$,
\begin{align*}
E_{\nu_1}&(f(\mu_N(T_0))) - E_{\nu_2}(f(\mu_N(T_0)))  \\
& = \sum_{\xi \in M_1^N(\mathcal{Z})}P_{T_0}(\nu_1,\xi) f(\xi) - \sum_{\xi \in M_1^N(\mathcal{Z})}P_{T_0}(\nu_2,\xi) f(\xi) \\
& = \sum_{\xi \in M_1^N(\mathcal{Z})}Q_{T_0}(\nu_1,\xi) f(\xi)  \pi_N(\xi)- \sum_{\xi \in M_1^N(\mathcal{Z})}Q_{T_0}(\nu_2,\xi) f(\xi) \pi_N(\xi) \\
& = \sum_{\xi \in M_1^N(\mathcal{Z})}(Q_{T_0}(\nu_1,\xi)- \exp\{-2N\varepsilon\})f(\xi) \pi_N(\xi) \\
& \, \, \, \, \, \, \, \, -\sum_{\xi \in M_1^N(\mathcal{Z})}(Q_{T_0}(\nu_2,\xi) - \exp\{-2N\varepsilon\}) f(\xi) \pi_N(\xi) \\
& \leq (1-\exp\{-2N\varepsilon\}) (\sup_\xi f(\xi) - \inf_\xi f(\xi)),
\end{align*}
where the last inequality follows from~(\ref{eqn:temp8}) and the fact that $Q_{T_0}(\cdot, \cdot) \geq 1$. Therefore, we have that
\begin{align*}
\sup_{\nu_1, \nu_2} | E_{\nu_1}&(f(\mu_N(T_0))) - E_{\nu_2}(f(\mu_N(T_0)))  | \leq (1-\exp\{-2N\varepsilon\}) \| f\|_\infty.
\end{align*}
Continuing this procedure $k$ times, and by using the Markov property, we get
\begin{align*}
\sup_{\nu_1, \nu_2} |E_{\nu_1}&(f(\mu_N(kT_0))) - E_{\nu_2}(f(\mu_N(kT_0))) |  \leq (1-\exp\{-2N\varepsilon\})^k \| f\|_\infty,
\end{align*}
and hence, we have
\begin{align*}
\sup_{\nu} | E_{\nu}&(f(\mu_N(kT_0))) - \langle f, \wp_N \rangle | \leq (1-\exp\{-2N\varepsilon\})^k \| f\|_\infty.
\end{align*}
Choose $k = \exp\{N(\delta_0+\delta)\}$, then we have $kT_0 = \exp\{N(\Lambda + \delta)\}$ and the above becomes
\begin{align*}
\sup_{\nu} | E_{\nu}&(f(\mu_N(kT_0))) - \langle f, \wp_N \rangle | \leq \exp\{-\exp(N(-2\varepsilon+\delta_0 + \delta))\}.
\end{align*}
We can choose $\varepsilon$ small enough such that the quantity $-2\varepsilon+ \delta > 0$, and hence for some $\varepsilon^\prime > 0$, we have
\begin{align*}
\sup_{\nu} | E_{\nu}&(f(\mu_N(T))) - \langle f, \wp_N \rangle | \leq \exp\{-\exp(N\varepsilon^\prime)\},
\end{align*}
for sufficiently large $N$, where $T = \exp\{N(\Lambda+\delta)\}$. This establishes the result.
\end{proof}
\begin{proof}[Proof of Theorem~\ref{thm:conv-converse}]
This is a direct consequence of~\eqref{eqn:ub_tpm} established in Theorem~\ref{thm:mixing}.
\end{proof}
\section{Asymptotics of the second largest eigenvalue for reversible processes}
\label{section:eval_problem}
In this section, our goal is to understand the convergence rate of $\mu_N$ to its invariant measure for a fixed $N$. For this purpose, we shall assume that the Markov process $\mu_N$ is reversible. That is, the operator $L^N$ is self-adjoint in $L^2(\wp_N)$ and it admits a spectral expansion; let $0 =\lambda_1^N > -\lambda_2^N \geq -\lambda_3^N \ldots$ denote its eigenvalues in the decreasing order, and let $u_1^N \equiv 1, u_2^N, u_3^N,\ldots$ denote their corresponding eigenfunctions. The spectral expansion enables us to write, for any $f \in B(M_1(\mathcal{Z}))$,
\begin{align}
E_\nu f(\mu_N(t)) = \langle f, \wp_N\rangle + \sum_{k \geq 2} e^{-t\lambda_k^N} ( f, u_k^N ) u_k^N(\nu),
\label{eqn:spectral_expansion}
\end{align}
where $( \cdot, \cdot )$ denotes the inner product in $L^2(\wp_N)$. Therefore, the convergence rate of $E_\nu f(\mu_N(t))$ to its stationary value $\langle f, \wp_N \rangle$ is determined by the leading term in the above sum, which is the second largest eigenvalue $\lambda_2^N$. Hence, to understand convergence of $\mu_N$ to its invariant measure, we study the  asymptotics of the second largest eigenvalue $\lambda_2^N$.

We first need the following lemma that estimates the probability that the process $\mu_N$ is outside a small neighbourhood of the set $\cup_{i=1}^l K_i$. This can be shown using Theorem~\ref{thm:conv} with deals with the convergence to the invariant measure and Theorem~\ref{thm:invariant-measure-gase} which addresses large deviations of the invariant measure $\{\wp_N\}_{N \geq 1}$.
\begin{lemma}
Fix $\rho_1 >0$ and let $B$ be the $\rho_1$-neighbourhood of $\cup_{i \in L} K_i$. Given $\varepsilon > 0$, there exist $\delta >0$ and $N_0 \geq 1$ such that for each $\nu \in M_1^N(\mathcal{Z})$ and $N \geq N_0$, we have
\begin{align*}
P_\nu \left(\mu_N(T) \in M_1^N(\mathcal{Z}) \setminus B\right) \leq \exp\{-N\delta\},
\end{align*}
where $T = \exp\{N(\Lambda+ \varepsilon)\}$.
\end{lemma}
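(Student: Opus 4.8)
The plan is to reduce the statement to Theorem~\ref{thm:conv} on convergence to the invariant measure, together with the large deviation bound for $\{\wp_N\}_{N\ge1}$ in Theorem~\ref{thm:invariant-measure-gase}. Put $F \coloneqq M_1(\mathcal{Z}) \setminus B$; as $B$ is open, $F$ is closed, hence compact. Take $f \coloneqq \indf{F} \in B(M_1(\mathcal{Z}))$, so $\|f\|_\infty \le 1$, and, since $\mu_N(T) \in M_1^N(\mathcal{Z})$ always, $P_\nu(\mu_N(T) \in M_1^N(\mathcal{Z}) \setminus B) = E_\nu f(\mu_N(T))$.

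First I would invoke Theorem~\ref{thm:conv} with its free parameter equal to the present $\varepsilon$, obtaining $\varepsilon_0 > 0$ and $N_1$ such that, for $T = \exp\{N(\Lambda+\varepsilon)\}$, every $\nu \in M_1^N(\mathcal{Z})$ and every $N \ge N_1$,
\[
P_\nu\big(\mu_N(T) \in M_1^N(\mathcal{Z}) \setminus B\big) = E_\nu f(\mu_N(T)) \le \langle f,\wp_N\rangle + \exp\{-\exp(N\varepsilon_0)\} = \wp_N(F) + \exp\{-\exp(N\varepsilon_0)\}.
\]
By Theorem~\ref{thm:invariant-measure-gase}, $\{\wp_N\}_{N\ge1}$ satisfies the LDP on $M_1(\mathcal{Z})$ with the good rate function $s$; the large deviation upper bound applied to the closed set $F$ gives $\limsup_{N\to\infty}\frac{1}{N}\log\wp_N(F) \le -s_0$ with $s_0 \coloneqq \inf_{\xi\in F} s(\xi)$.

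The crux, and the step I expect to require the most care, is to check $s_0 > 0$. As $s$ is a good rate function (lower semicontinuous, compact sublevel sets) and $F$ is compact, the infimum is attained, so $s_0 > 0$ is equivalent to $\{\xi : s(\xi) = 0\} \cap F = \emptyset$; since $\bigcup_{i\in L} K_i \subseteq B$, it suffices to show $\{s = 0\} \subseteq \bigcup_{i\in L} K_i$. From $s(\xi) = \min_i\{W(i) + V(K_i,\xi)\} - \min_j W(j)$ and the fact that $V(K_i,\xi) = 0$ for $\xi\in K_i$, one readily gets $\{s=0\} \supseteq \bigcup_{i\in\tilde{L}_0} K_i$, where $\tilde{L}_0$ indexes the compacta of minimal $W$; the reverse inclusion is the substantive point. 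It amounts to showing that a point $\xi$ reached at zero quasipotential cost from a minimal-$W$ compactum $K_i$ must lie in some $K_j$: following the McKean--Vlasov flow (\ref{eqn:MVE}) from $\xi$ would extend the zero-cost path into an $\omega$-limit set, which by assumption~\ref{assm:b1} lies in some $K_j$, and minimality of $W(i)$ prevents the original path from ever leaving $\bigcup_j K_j$; this is where the structure of the Freidlin--Wentzell quasipotential enters (cf.~\cite[Chapter~6]{freidlin-wentzell}).

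Finally I would combine the bounds. For $N$ large, $\wp_N(F) \le \exp\{-Ns_0/2\}$, and $\exp\{-\exp(N\varepsilon_0)\} \le \exp\{-Ns_0/2\}$ for $N$ large because the left-hand side decays faster than any exponential in $N$; hence, taking $\delta \coloneqq s_0/3$,
\[
P_\nu\big(\mu_N(T) \in M_1^N(\mathcal{Z}) \setminus B\big) \le 2\exp\{-Ns_0/2\} \le \exp\{-N\delta\}
\]
for all $\nu \in M_1^N(\mathcal{Z})$ and all $N$ sufficiently large, which proves the lemma.
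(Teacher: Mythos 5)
Your proposal is exactly the reduction the paper sketches in the sentence preceding the lemma: Theorem~\ref{thm:conv} gives $P_\nu(\mu_N(T)\in F) \le \wp_N(F) + \exp\{-\exp(N\varepsilon_0)\}$ with $T=\exp\{N(\Lambda+\varepsilon)\}$, and the LDP upper bound of Theorem~\ref{thm:invariant-measure-gase} applied to the compact set $F=M_1(\mathcal{Z})\setminus B$ yields an $\exp\{-N\delta\}$ bound once one knows $\inf_F s>0$. The one loosely argued step in your write-up is this positivity. The sentence that minimality of $W(i)$ ``prevents the original path from ever leaving $\bigcup_j K_j$'' does not quite parse: the zero-cost path through $\xi$ plainly does leave $\bigcup_j K_j$ whenever $\xi$ does. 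The fact you actually need is the standard Freidlin--Wentzell one that for $i\in\tilde L_0$ the compactum $K_i$ is an asymptotically stable $\omega$-limit set of~(\ref{eqn:MVE}); if $\tilde V(K_i,K_j)=0$ for some $j\ne i$, one can reroute a minimising $\{i\}$-graph (add the arrow $i\to j$, delete the outgoing arrow at $j$) to produce a $\{j\}$-graph of no larger weight, contradicting strict minimality. It follows that $V(K_i,\xi)>0$ for every $\xi\notin K_i$ when $i\in\tilde L_0$, hence $\{s=0\}=\bigcup_{i\in\tilde L_0}K_i\subset B$, and $\inf_F s>0$ follows from compactness of $F$ and lower semicontinuity of $s$; with that, your final combination of bounds is correct.
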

We are now ready to prove our next main result (Theorem~\ref{thm:eval_problem}) on the asymptotics of the second largest eigenvalue $\lambda_2^N$.
\begin{proof}[Proof of Theorem~\ref{thm:eval_problem}]
(Lower bound): Suppose that there exists a subsequence $\{N_k\}_{k \geq 1}$ such that
\begin{align}
\log \lambda_2^{N_k} < -N_k(\Lambda + \varepsilon)
\label{eqn:temp-assumption}
\end{align}
for some $\varepsilon > 0$. We will show that this contradicts $\int (u_2^{N_k}(\nu))^2 \wp_N(d\nu) =1 $ for sufficiently large $k$. Fix $\rho >0 $ and define $B \coloneqq \cup_{i=1}^l [K_i]_{\rho}$. Then, using the lower semicontinuity of the rate function $S_t(\cdot|\cdot)$ and Corollary~\ref{cor:uniform_ldp} on uniform LDP, we see that for sufficiently large $t$, there exists $\delta_1 >0$ such that $\inf\{S_t(\xi|\nu):\xi, \nu \in B^c\} = \delta_1 > 0$. Therefore, for any $\nu \in B^c \cap M_1^N(\mathcal{Z})$ and any $\delta_2>0$, there exists $N_0 \geq 1$ such that for all $N \geq N_0$,
\begin{align*}
P_\nu(\mu_N(t) = \nu) \leq \exp\{-N(S_t(\nu|\nu) - \delta_2)\} \leq \exp\{-N(\delta_1 + \delta_2)\}.
\end{align*}
On the other hand, (\ref{eqn:spectral_expansion}) implies that,
\begin{align*}
P_\nu(\mu_N(t) = \nu) & = E_\nu(1_{\nu}(\mu_N(t))) \\
&\geq e^{-\lambda_2^Nt} (u_2^N(\nu))^2 \wp_N(\nu),
\end{align*}
so that
\begin{align}
\int_{B^c} |u_2^N|^2 \wp_N (d \nu) \leq \exp\{-N(\delta_1 + \delta_2)\}
\label{eqn:temp11}
\end{align}
for all $N \geq N_0$. To bound the integral over $B$, by Theorem~\ref{thm:conv}, with $T = \exp\{N(\Lambda+\varepsilon/2)\}$, there exist $\delta_3>0$ and $N_1 \geq N_0$ such that for all $N \geq N_1$,
\begin{align*}
\left|E_\nu f(\mu_N(T)) - \langle f, \wp_N \rangle \right| \leq \|f\|_\infty \exp\{-\exp(N\delta_3)\},
\end{align*}
for any $f \in B(M_1(\mathcal{Z}))$. On the other hand, from~(\ref{eqn:spectral_expansion}), for any $\nu \in B \cap M_1^N(\mathcal{Z})$, with $f = 1_\nu$, we have

\begin{align*}
\left| E_\nu f(\mu_N(T)) - \langle f, \wp_N \rangle \right| & = \sum_{i \geq 2} \exp\{-\lambda_i^NT\} \langle f, \wp_N(\nu) \rangle u_i^{N}(\nu) \\
& \geq \exp\{-\lambda_2^{N}T\} (u_2^{N}(\nu))^2 \wp_N(\nu),
\end{align*}
so that, by our assumption~(\ref{eqn:temp-assumption}), there exists a $k_0 \geq 1$ such that
\begin{align*}
u_2^{N_k}(\nu))^2 \wp_{N_k}(\nu) & \leq \exp\{\lambda_2^{N_k}T\}  \exp\{-\exp(N_k\delta_3)\} \\
& \leq  \exp\{2\exp(-N_k(\Lambda+\varepsilon)) \exp(N_k(\Lambda+\varepsilon/2))\} \exp\{-N_k\delta_3\}
\end{align*}
for all $k \geq k_0$. Since $|M_1^{N_k}(\mathcal{Z})| \leq (N_k+1)^{|\mathcal{Z}|}$ for all $k$, the above implies that, for some $\delta_4 > 0$,
\begin{align}
\int_B (u_2^{N_k}(\nu))^2 \wp_{N_k}(d\nu) \leq \exp\{-N_k\delta_4\}
\label{eqn:temp12}
\end{align}
for all $k \geq k_0$. Therefore,~(\ref{eqn:temp11}) and~(\ref{eqn:temp12}) implies that, for some $\delta > 0$,
\begin{align*}
\int_{M_1(\mathcal{Z})}  (u_2^{N_k}(\nu))^2 \wp_{N_k}(d\nu) \leq \exp\{-N_k\delta\}
\end{align*}
for all sufficiently large $k$, which is a contradiction to $\int  (u_2^{N_k}(\nu))^2 \wp_{N_k} (d\nu) =1$ for all sufficiently large $k$.

(Upper bound):
Suppose that there exists a subsequence $\{N_k\}_{k \geq 1}$ such that $\log \lambda_2^N > N_k(-\Lambda + \varepsilon)$ for some $\varepsilon>0$. Let $\nu_0, \delta_0<\varepsilon/2, \rho, N_0$ be as in Theorem~\ref{thm:mixing}. Then, with $f(\nu) =1_{[K_{i_0}]_{\rho/2}}(\nu)$ and $T = \exp\{N(\Lambda-\delta_0/2)\}$,~(\ref{eqn:ub_tpm}) implies that
\begin{align*}
E_\nu f(\mu_N(T)) = P_\nu(\mu_N(T) \in [K_{i_0}]_{\rho/2}) \leq \exp\{-N\beta\}
\end{align*}
for all $N \geq N_0$ and $\nu \in  [\nu_0]_{\rho/2 } \cap M_1^N(\mathcal{Z})$. Also, by Theorem~\ref{thm:invariant-measure-gase}, for any $\delta >0$, there exists $N_1 \geq N_0$ such that for all $N \geq N_1$, we have
\begin{align*}
\langle f, \wp_N \rangle = \wp_N([K_{i_0}]_{\rho/2}) \geq \exp\{-N\delta\}.
\end{align*}
This is possible since $\inf_{\xi \in [K_{i_0}]_{\rho/2}}s(\xi) =0$. Therefore, for all $N \geq N_1$,
\begin{align*}
\int_{M_1(\mathcal{Z})} |E_\nu(f(\mu_N(T))) & - \langle f, \wp_N \rangle |^2 \wp_N(d\nu) \\
&\geq \int_{[\nu_0]_{\rho/2}} \left|E_\nu(f(\mu_N(T))) - \langle f, \wp_N \rangle \right|^2 \wp_N(d\nu) \\
& \geq \wp_N([\nu_0]_{\rho/2}) (\exp\{-N\beta\} - \exp\{-N\delta\}) \\
&\geq \wp_N([\nu_0]_{\rho/2}) \exp\{-N\delta_1\}, \text{ for some } \delta_1 > 0 \\
&\geq \exp\{-N\delta_2\}, \text{ for some } \delta_2 > 0,
\end{align*}
where the last inequality follows by Theorem~\ref{thm:invariant-measure-gase}. On the other hand, for any function $f$ with $\int |f|^2 d\wp_N \leq 1$, we have
\begin{align*}
\int_{M_1(\mathcal{Z})} |E_\nu(f(\mu_N(T))) & - \langle f, \wp_N \rangle |^2 \wp_N(d\nu)  \\
& =  \int_{M_1(\mathcal{Z})} \sum_{k\geq 2} e^{-2\lambda_k^NT} \langle f, u_2^N\rangle u_2^N(\nu)^2 \wp_N(d\nu) \\
&\leq \exp\{-2\lambda_2^NT\} \int_{M_1(\mathcal{Z})} |f|^2 d\wp_N  \\
&\leq \exp\{-2\lambda_2^NT\}.
\end{align*}
Therefore, we have $ \exp\{-2\lambda_2^{N}T\} \geq \exp\{-N\delta_2\}$ whenever $N \geq N_1$. By our assumption, we see that
\begin{align*}
 \exp\{-2\exp(-N_k(\Lambda-\varepsilon)) \exp(N_k(\Lambda-\delta_0))\} \geq \exp\{-N_k\delta_1\}
\end{align*}
for sufficiently large $k$, which is a contradiction since $\delta_0 < \varepsilon/2$.
\end{proof}
Using the above theorem, we see that, if $\Lambda>0$, then as $N$ becomes large, it takes longer for the process $\mu_N$ to be close to its invariant measure. This particularly means that metastable states reduce the rates of convergence of $\mu_N$ to its invariant measure. On the other hand, if there is a unique global attractor of the limiting McKean-Vlasov equation~(\ref{eqn:MVE}), then we see that $\Lambda = 0$, and convergence rate of $\mu_N$ to its invariant measure does not suffer from such a slowing down phenomenon.
\begin{remark}
Note that the spectral expansion in~(\ref{eqn:spectral_expansion}) is crucial in the proof of Theorem~\ref{thm:eval_problem} to be able to use the results on large time behaviour of $\mu_N$ established in Section~\ref{section:large_time_behaviour} to obtain the asymptotics of $\lambda^2_N$. The main purpose of Theorem~\ref{thm:eval_problem} is to demonstrate that, in the reversible case, the asymptotics of $\lambda^2_N$ can be easily obtained as an application of the study of the large time behaviour of $\mu_N$. Even in the non-reversible case, one can obtain asymptotics of the real part of $\lambda^2_N$ via other approaches; see, for example,~\cite{wentzell-75}, where the author obtains the asymptotics of the real part of the second largest eigenvalue of the generator corresponding to a small noise diffusion process via examining eigenvalues of a discrete time chain (with transition probabilities of the form appearing in~(\ref{eqn:tpm_zn})) and transferring them to the operator.
\end{remark}
\begin{remark}One can construct examples where $\mu_N$ is reversible with respect to $\wp_N$. For instance, in the non-interacting case (i.e. when, for each $(z,z^\prime) \in \mathcal{E}$, $\lambda_{z,z^\prime}(\cdot)$ is a constant function, which we denote by $\lambda_{z,z^\prime}$) where the Markov process on $\mathcal{Z}$ with generator
\begin{align*}
f \mapsto \sum_{z^\prime: (z,z^\prime) \in \mathcal{E}} (f(z^\prime) - f(z)) \lambda_{z,z^\prime}, z \in \mathcal{Z}
\end{align*}
is reversible with respect to its invariant measure (i.e. when the Markov process corresponding to a single particle's evolution on $\mathcal{Z}$ is reversible with respect to its invariant measures) results in a reversible empirical measure process $\mu_N$. However, the authors are not aware of a general condition (in terms of the transition rates $\lambda_{z,z^\prime}(\cdot), (z,z^\prime) \in \mathcal{E}$) that characterises reversibility of $\mu_N$.
\end{remark}
\section{Convergence to a global minimum via controlled addition of particles}
\label{section:conv_global_minimum}

In this section, our goal is to increase the number of particles $N$ over time so as to obtain, with high probability, convergence of the empirical measure process to a global minimum of the rate function $s$ that governs the LDP for the sequence of invariant measure $\{\wp_N\}_{N\geq 1}$.

Fix $c>0$. Let $N_0 = \min\{n \in \mathbb{N} : \exp\{nc\} - 2 \geq 0 \}$, $t_{N_0} = 0$, and for each $N > N_0$, let $t_N = \exp\{Nc\}-2$. For each $N \geq N_0$ define the generator $L^N_t$ acting on bounded measurable functions on $M_1(\mathcal{Z})$ by
\begin{align*}
L_t^Nf (\xi) \coloneqq \sum_{(z,z^\prime) \in E} N_t \xi(z) \lambda	_{z,z^\prime}(\xi) \left[ f\left(\xi+\frac{e_{z^\prime}}{N_t}- \frac{e_z}{N_t}\right) - f(\xi) \right], \, t\in [t_{N}, t_{N+1}).
\end{align*}
 where $N_t = N$ for $t \in [t_N, t_{N+1})$. Let $z_0 \in \mathcal{Z}$ be a fixed state and let $\nu \in M_1^{N_0}(\mathcal{Z})$. We say that a probability measure $P_{0,\nu}$ on $D([0,\infty),M_1(\mathcal{Z}))$ is a solution to the martingale problem for $\{L^N\}_{N \geq N_0}$ with initial condition $\nu$ if $P_{0,\nu}(\bar{\mu}: \bar{\mu}(0) = \nu)=1$, for each $N \geq N_0$, the restriction of $P_{0,\nu}$ on $D([t_N, t_{N+1}), M_1^{N}(\mathcal{Z}))$ is a solution to the $D([t_N, t_{N+1}), M_1^{N}(\mathcal{Z}))$-valued martingale problem for $L^N$,  and
\begin{align*}
P_{0,\nu}\left(\bar{\mu}: \bar{\mu}(t_{N+1}) = \frac{N}{1+N}\bar{\mu}(t_{N+1}^-) + \frac{1}{N+1} \delta_{z_0}\right)  =1.
\end{align*}
Again, by the boundedness assumption on transition rates~\ref{assm:a2}, for each $\nu \in M_1^{N_0}(\mathcal{Z})$, there exists a unique probability measure $P_{0,\nu}$ that solves the martingale problem for $\{L^N\}_{N \geq N_0}$ with initial condition $\nu$. Let $\bar{\mu}$ be the process on $D([0,\infty),M_1(\mathcal{Z}))$ whose law is $P_{0,\nu}$. To describe the process in words, we start with $N_0$ particles and follow the mean-field interaction described in Section~\ref{section:introduction}, except that at each time instant $t_N, N > N_0$, we add a new particle whose state is set to $z_0$.

We anticipate that if $c$ is small then $N_t$ is so large  that the fluid limit kicks in too quickly over time and the process $\bar{\mu}$  converges (over time) to a local minimum of $s$ with positive probability depending on the initial condition $\bar{\mu}(0)$. When $c$ is sufficiently large, we anticipate that there is enough time for exploration and therefore we will converge to a global minimum of $s$. Recall that the set of global minimisers of $s$ is denoted by $\tilde{L}_0$. Our interest in this section is in finding a constant $c^*$ such that for all $c >  c^*$ and $\nu \in M_1^{N_0}(\mathcal{Z})$, we have,
\begin{align}
P_{0,\nu} (\bar{\mu}(t) \text{ lies in a neighbourhood of }\tilde{L}_0) \to  1
\label{eqn:conv_globalmin}
\end{align}
as $t \to \infty$.

We use the results in the previous sections to identify the constant $c^*$. Since $N_t \to \infty$ as $t \to \infty$, for a fixed $T > 0$ and large enough $t$, the large deviation properties of the process $\{\bar{\mu}(s),  t \leq s \leq t+T \}$ from the limiting dynamics~(\ref{eqn:MVE}) starting at an arbitrary $\bar{\mu}(t)$ can be obtained similar to the LDP of the process $\mu_N$ studied in Theorem~\ref{thm:finite-duration-ldp} and Corollary~\ref{cor:uniform_ldp}.  Therefore, the results in the previous sections on the large time behaviour for the process $\{\mu_N(t), t\geq 0\}$ are also valid for $\{\bar{\mu}(t) , t \geq 0\}$ when time $t$ is large enough; we make these precise now.

\begin{lemma}[see Lemma~\ref{lemma:hitting_place}]
Let $\pi_1^k$ and $\pi_2^k$ be $k$-cycles and suppose that $\pi_1^k \to \pi_2^k$ and $\tilde{V}(\pi_1^k)/c < 1$. Then, given $\varepsilon > 0$, there exist $\delta>0$ and  $\rho>0$ such that for all $\rho_1 < \rho$, there is $t^* >0$ such that
\begin{align*}
P_{t,\nu}(\bar{\tau}_{\pi^k_1} \leq t + t^{(\tilde{V}(\pi_1^k) -\delta)/c}, \bar{\mu}(\bar{\tau}_{\pi_1^k}) \in \gamma_{\pi_2^k}) \geq t^{-\varepsilon/c}
\end{align*}
holds uniformly for all $\nu \in [\pi_1^k]_{\rho_1} \cap M_1^{N_{t}}(\mathcal{Z})$ and $t \geq t^*$.
\label{lemma:hs22}
\end{lemma}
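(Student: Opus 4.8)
The plan is to transfer the estimate of Lemma~\ref{lemma:hitting_place} to the time-inhomogeneous process $\bar\mu$ by exploiting that, over the relevant time windows, the particle count $N_s$ is essentially frozen. First I would record the dictionary between the two time scales: since $t_N=\exp\{Nc\}-2$, for $s\in[t_N,t_{N+1})$ one has $N_s=N$ and $(s+2)\in[\exp\{Nc\},\exp\{(N+1)c\})$, so that $\exp\{aN_s\}=(s+2)^{a/c}e^{O(a)}$ for any constant $a$. Under this dictionary a homogeneous time of order $\exp\{N(\tilde V(\pi_1^k)-\delta)\}$ corresponds to a time of order $t^{(\tilde V(\pi_1^k)-\delta)/c}$, and a homogeneous probability of order $\exp\{-N\varepsilon\}$ to one of order $t^{-\varepsilon/c}$ --- precisely the scaling asserted in the lemma.

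Next I would use the hypothesis $\tilde V(\pi_1^k)/c<1$ to show that the window is short relative to the particle-addition schedule, so that the homogeneous machinery of Section~\ref{section:large_time_behaviour} applies at the frozen level $N_t$. Fixing $\delta>0$ small enough that $\beta\coloneqq(\tilde V(\pi_1^k)-\delta)/c<1$, from $t<t_{N+1}$ one gets $\exp\{Nc\}>(t+2)e^{-c}$, whence the consecutive gaps obey $t_{N+1}-t_N=\exp\{Nc\}(e^c-1)>(1-e^{-c})(t+2)$; since $t^\beta=o(t)$, for all large $t$ the interval $[t,t+t^\beta]$ meets at most one breakpoint, so at most one particle is added during it. Thus on that window $\bar\mu$ agrees in law with the homogeneous process $\mu_{N_t}$ up to a single deterministic jump of size $O(1/N_t)=o(1)$ in $M_1(\mathcal Z)$, a perturbation too small to affect exit estimates; Theorem~\ref{thm:finite-duration-ldp} and the uniform bound of Corollary~\ref{cor:uniform_ldp} then apply to $\bar\mu$ over $[t,t+t^\beta]$ at level $N_t\to\infty$, uniformly over the starting point $\nu\in[\pi_1^k]_{\rho_1}\cap M_1^{N_t}(\mathcal Z)$.

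With these two ingredients in hand I would rerun the argument behind Lemma~\ref{lemma:hitting_place} at level $N_t$: construct a trajectory of cost at most $\tilde V(\pi_1^k)-\delta$ that exits $\pi_1^k$ through $\gamma_{\pi_2^k}$ within time $\exp\{N_t(\tilde V(\pi_1^k)-\delta)\}$, and combine it with the joint continuity of $S_T$ (see~\cite[Lemma~3.3]{borkar-sundaresan-12}) and~\cite[Chapter~6,~Theorem~6.2]{freidlin-wentzell}, exactly as in~\cite[Part~III]{hwang-sheu-90}. This should give $P_{t,\nu}(\bar\tau_{\pi_1^k}\le t+t^{(\tilde V(\pi_1^k)-\delta)/c},\ \bar\mu(\bar\tau_{\pi_1^k})\in\gamma_{\pi_2^k})\ge\exp\{-N_t\varepsilon'\}=t^{-\varepsilon'/c}$ for a suitable $\varepsilon'$, uniformly over $\nu$ as above and all $t\ge t^*$; relabelling $\varepsilon'$ as $\varepsilon$ and absorbing the $e^{O(\varepsilon)}$ prefactor from the dictionary yields the claim. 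The main obstacle I anticipate is the uniformity in $t$: the estimates of Section~\ref{section:large_time_behaviour} are stated for a single homogeneous level tending to infinity, so one must check they hold uniformly as the frozen level $N_t$ ranges over all sufficiently large integers, and that the bounded number of intervening particle additions --- each a negligible perturbation of the empirical measure --- does not spoil the constants. This is where the compactness of $M_1(\mathcal Z)$, the uniform-over-initial-conditions form of Corollary~\ref{cor:uniform_ldp}, and Assumption~\ref{assm:a2} do the work, just as in the analogous step of~\cite[Part~III]{hwang-sheu-90}.
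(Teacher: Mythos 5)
Your proposal is correct and follows the route the paper's terse treatment indicates: freeze the particle count at $N_t=\lfloor\log(t+2)/c\rfloor$, apply the homogeneous Lemma~\ref{lemma:hitting_place} at that level, and translate time and probability scales via $\exp\{aN_t\}\asymp(t+2)^{a/c}$. You in fact sharpen the remark the paper appends to this lemma --- that ``the number of particles does not change'' on the window $[t,\,t+t^{(\tilde{V}(\pi_1^k)-\delta)/c}]$ --- since when $t$ sits just below a breakpoint $t_{N_t+1}$ the window \emph{does} cross it; the correct invariant, which you supply, is that at most one breakpoint is crossed. The one sentence that should be rephrased is the claim that $\bar{\mu}$ ``agrees in law with the homogeneous process $\mu_{N_t}$ up to a single deterministic jump'': after $t_{N_t+1}$ the process evolves under $L^{N_t+1}$, not $L^{N_t}$, so the two laws genuinely differ. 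What you actually need, and what your appeal to Theorem~\ref{thm:finite-duration-ldp} and Corollary~\ref{cor:uniform_ldp} supplies, is that both levels $N_t$ and $N_t+1$ satisfy the finite-duration uniform LDP with the same rate function $S_{[0,T]}$ (the speeds $N_t$ and $N_t+1$ being asymptotically equivalent), so the two sub-interval estimates glue at the breakpoint by the strong Markov property, with the joint continuity of $S_T(\cdot\,|\,\cdot)$ absorbing the $O(1/N_t)$ displacement of $\bar{\mu}$ when the new particle is inserted at $z_0$. With that restatement your argument is sound, including the absorption of the $e^{O(\varepsilon')}$ prefactor by taking $\varepsilon'<\varepsilon$.
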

\begin{remark}The condition $\tilde{V}(\pi_1^k) /c <1$ in the above lemma ensures that during the time duration $[t, t^{\tilde{V}(\pi_1^k)/c}]$, for large enough $t$, the number of particles does not change so that Lemma~\ref{lemma:hitting_place} for the process $\mu_N$ is applicable for the process $\bar{\mu}$.
\end{remark}
\begin{lemma}[see Lemma~\ref{lemma:hitting_time}]
Let $\pi^k$ be a $k$-cycle and suppose that $\tilde{V}(\pi^k)/c < 1$. Then, given $\delta > 0$ such that $(\tilde{V}(\pi^k)+\delta)/c < 1$, there exist $\varepsilon > 0$ and $\rho>0$ such that for all $\rho_1 < \rho$, there is $t^* > 0$ such that
\begin{align*}
P_{t,\nu}(\bar{\tau}_{\pi^k} & <  t + t^{(\tilde{V}(\pi^k) -\delta)/c}) \leq t^{-\varepsilon/c}, \text{ and }\\
P_{t,\nu}(\bar{\tau}_{\pi^k}& > t + t^{(\tilde{V}(\pi^k) +\delta)/c}) \leq t^{-\varepsilon/c}
\end{align*}
holds uniformly for all $\nu \in [\pi^k]_{\rho_1} \cap M_1^{N_{t}}(\mathcal{Z})$  and $t \geq t^*$.
\label{lemma:hs23}
\end{lemma}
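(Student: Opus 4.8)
The plan is to deduce the time-inhomogeneous bounds from the fixed-population estimate in Lemma~\ref{lemma:hitting_time} via the correspondence $N_t = \lfloor \log(t+2)/c \rfloor$. From $\exp\{N_t c\} \le t+2 < \exp\{(N_t+1)c\}$ one gets $N_t = \log t/c + O(1)$, hence $t^{a/c} = \Theta(\exp\{a N_t\})$ and $t^{-\varepsilon/c} = \Theta(\exp\{-\varepsilon N_t\})$ with multiplicative constants depending only on $a$ (resp.\ $\varepsilon$) and $c$; these bounded discrepancies are absorbed by applying Lemma~\ref{lemma:hitting_time} with a slightly smaller tolerance $\delta' < \delta$ and a correspondingly small exponent, and by then choosing the $\varepsilon$ in the conclusion a little smaller. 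The one structural fact I would record first is that over a window $[t, t + t^{\alpha}]$ with $\alpha < 1$ the number of particle additions is $\lfloor \log(t+2+t^{\alpha})/c\rfloor - \lfloor \log(t+2)/c\rfloor$, which is at most $1$ once $t \ge t^{*}(\alpha)$ because $\log(1 + t^{\alpha}/(t+2)) \to 0$. Since $\tilde{V}(\pi^k)/c < 1$ and $(\tilde{V}(\pi^k)+\delta)/c < 1$ by hypothesis, both exponents $(\tilde{V}(\pi^k) \mp \delta)/c$ are $< 1$, so this applies to all windows below; on each sub-interval where $N_s$ is constant, equal to $N$, the process $\bar{\mu}$ is a time shift of $\mu_N$, and at the (at most one) deterministic addition time $t_{N_t+1}$ the empirical measure is displaced by $O(1/N_t)$.

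For the first inequality I would condition via the strong Markov property at $t_{N_t+1}$, if it lies in $[t, t + t^{(\tilde{V}(\pi^k)-\delta)/c}]$. On $[t, t_{N_t+1})$ the process is $\mu_{N_t}$, and on the event in question an exit before $t_{N_t+1}$ forces $\bar{\tau}_{\pi^k} - t < t^{(\tilde{V}(\pi^k)-\delta)/c} \le \exp\{N_t(\tilde{V}(\pi^k) - \delta')\}$ for a suitable $\delta' < \delta$ and $t$ large, so the bound $P_\nu(\bar{\tau}_{\pi^k} < \exp\{N(\tilde{V}(\pi^k)-\delta')\}) \le \exp\{-N\varepsilon'\}$ of Lemma~\ref{lemma:hitting_time} with $N = N_t$ controls that part. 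After $t_{N_t+1}$ the post-addition measure $\frac{N_t}{N_t+1}\bar{\mu}(t_{N_t+1}^-) + \frac{1}{N_t+1}\delta_{z_0}$ still lies in $\gamma_{\pi^k}\cap M_1^{N_t+1}(\mathcal{Z})$ on the no-exit event (for $\rho_1$ small and $t$ large), and the same estimate with $N = N_t+1$ handles the rest. Summing the two pieces and using $\exp\{-N_t\varepsilon'\} \le t^{-\varepsilon/c}$ for $t$ large gives the claim, uniformly in $\nu \in [\pi^k]_{\rho_1} \cap M_1^{N_t}(\mathcal{Z})$.

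For the second inequality I would split $[t, t + t^{(\tilde{V}(\pi^k)+\delta)/c}]$ at $t_{N_t+1}$ and work on whichever of the two sub-intervals has length at least $\frac12 t^{(\tilde{V}(\pi^k)+\delta)/c}$ (one of them must). On that sub-interval the population is constant ($N_t$ or $N_t+1$) and its length is at least $\exp\{N_t(\tilde{V}(\pi^k)+\delta')\}$ for some $\delta' < \delta$ and $t$ large, so the bound $P_\nu(\bar{\tau}_{\pi^k} > \exp\{N(\tilde{V}(\pi^k)+\delta')\}) \le \exp\{-N\varepsilon'\}$ of Lemma~\ref{lemma:hitting_time} dominates the probability of not exiting the cycle during that sub-interval, hence dominates $P_{t,\nu}(\bar{\tau}_{\pi^k} > t + t^{(\tilde{V}(\pi^k)+\delta)/c})$, and $\exp\{-N_t\varepsilon'\} \le t^{-\varepsilon/c}$ finishes it.

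The one genuinely delicate point, which I expect to be the main obstacle, occurs when the useful sub-interval is $[t_{N_t+1}, t + t^{(\tilde{V}(\pi^k)+\delta)/c}]$: conditioning on ``no exit yet'' only places $\bar{\mu}(t_{N_t+1}^-)$ in $M_1(\mathcal{Z}) \setminus \cup_{i \notin \pi^k}\gamma_i$, which also contains points outside every $\gamma_i$, so after the addition the measure need not belong to $\gamma_{\pi^k}$ and Lemma~\ref{lemma:hitting_time} is not directly applicable. Here I would invoke that $M_1(\mathcal{Z}) \setminus \cup_{i \in L}\gamma_i$ contains no $\omega$-limit set of~(\ref{eqn:MVE}), so by Corollary~\ref{cor:fw19} the process returns to $\cup_i \gamma_i$ in bounded expected time, hence, by Lemma~\ref{lemma:fw19}, within $\exp\{o(N_t)\}$ time with probability $1 - o(1)$; from there it has either already exited through some $\gamma_i$, $i \notin \pi^k$ --- the event we are after --- or it has entered $\gamma_{\pi^k}$, whereupon Lemma~\ref{lemma:hitting_time} applies with $N = N_t+1$, the extra wait being negligible against $t^{(\tilde{V}(\pi^k)+\delta)/c}$. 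This return step, together with the bookkeeping of the change of state space $M_1^{N_t}(\mathcal{Z}) \to M_1^{N_t+1}(\mathcal{Z})$ across the addition, is where the bulk of the technical care goes; it parallels the corresponding step in the proof of Lemma~\ref{lemma:hitting_time}.
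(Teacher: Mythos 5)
The paper does not actually prove Lemma~\ref{lemma:hs23}: it is stated with only the bracketed cross-reference ``[see Lemma~\ref{lemma:hitting_time}]'' and the general remark (placed after Lemma~\ref{lemma:hs22}) that the condition $\tilde V(\pi^k)/c<1$ keeps the particle count fixed over the relevant window, so the fixed-$N$ estimate transfers. You correctly notice that this remark overstates things --- windows of length $t^{\alpha}$ with $\alpha<1$ straddle at most one of the deterministic addition times $t_{N+1}=\exp\{(N+1)c\}-2$, not zero --- and your overall plan (translate $t^{a/c}\asymp\exp\{aN_t\}$, split at the single addition time $t_{N_t+1}$, and invoke Lemma~\ref{lemma:hitting_time} with $N=N_t$ and $N=N_t+1$ on the two pieces) is the natural way to supply what the paper leaves implicit, and is in the spirit of what the paper intends.

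There is, however, a real gap in your treatment of the \emph{first} inequality. You write that ``after $t_{N_t+1}$ the post-addition measure \ldots still lies in $\gamma_{\pi^k}\cap M_1^{N_t+1}(\mathcal{Z})$ on the no-exit event.'' That is not what the no-exit event gives you: not having hit $\cup_{i\notin\pi^k}\gamma_i$ by $t_{N_t+1}$ only places $\bar\mu(t_{N_t+1}^-)$ in $\gamma_{\pi^k}\cup(M_1(\mathcal{Z})\setminus\gamma)$, and the $O(1/N_t)$ displacement does not repair this. So you do face at $t_{N_t+1}$ the same ``delicate point'' you flag for the second inequality, and there it cannot be dispatched the same way: in the second inequality, a return from $M_1(\mathcal{Z})\setminus\gamma$ into some $\gamma_i$, $i\notin\pi^k$, is the event you \emph{want}, so Corollary~\ref{cor:fw19} plus Lemma~\ref{lemma:hitting_time} suffices; in the first inequality such a return is exactly the bad event, and Corollary~\ref{cor:fw19} only tells you the excursion ends quickly, not that it is unlikely to end in $\cup_{i\notin\pi^k}\gamma_i$. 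In fact, $\sup_{\nu'\in M_1(\mathcal{Z})\setminus\gamma}P_{\nu'}(\bar\tau_{\pi^k}<t^{\alpha})$ is not small at all, since $\nu'$ can be arbitrarily close to $\partial\gamma_i$ for $i\notin\pi^k$. What saves the argument is that, under the conditioning, the process reached that state from $\gamma_{\pi^k}$ without exit, so the probability of the ongoing excursion terminating in $\cup_{i\notin\pi^k}\gamma_i$ is still exponentially small in $N_t$. Making this rigorous requires going one level below a black-box application of Lemma~\ref{lemma:hitting_time}: either use the two-layer neighbourhoods $\gamma_i\subset\Gamma_i$ and the uniform LDP directly (as in the proof sketched for Lemma~\ref{lemma:bsa6}), or re-trace the embedded chain $Z_n^{N}$ argument behind Lemma~\ref{lemma:hitting_time} and note that the single $O(1/N_t)$ perturbation at $t_{N_t+1}$ changes the one-step estimates by a negligible amount. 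Once you add that step, the argument closes; as written it does not.
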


\begin{lemma}[see Lemma~\ref{lemma:exit_time_vhat}]
Let $\pi^k$ be a $k$-cycle and suppose that $\hat{V}(\pi^k)/c < 1$. Given $\varepsilon>0$, there exist $\delta  \in (0, c-\hat{V}(\pi^k))$ and $\rho>0$ such that for all $\rho_1 \leq \rho$, there is $t^* >0$ such that
\begin{align*}
P_{t,\nu}(\bar{\tau}_{\pi^k}\leq t + t^{(\hat{V}(\pi^k)+\delta)/c}) \leq t^{-(\tilde{V}(\pi^k) - \hat{V}(\pi^k)-\varepsilon)/c}
\end{align*}
holds uniformly for all $\nu \in [\pi^k]_{\rho_1} \cap M_1^{N_{t}}(\mathcal{Z})$  and $t \geq t^*$.
\label{lemma:hs24}
\end{lemma}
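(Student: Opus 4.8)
The plan is to mimic the proof of Lemma~\ref{lemma:exit_time_vhat} (i.e., the proof of \cite[Lemma~2.1,~Part~III]{hwang-sheu-90}), but with the number of particles frozen during the relevant time window. The key structural observation is this: if $\hat{V}(\pi^k)/c < 1$ and $\delta$ is chosen so small that $(\hat{V}(\pi^k)+\delta)/c < 1$ as well, then starting from time $t$, during the entire time interval $[t, t + t^{(\hat{V}(\pi^k)+\delta)/c}]$ the particle count $N_s$ is essentially constant — more precisely, it takes at most one or two of the fixed values near $N_t = \lfloor (\log(2+t))/c\rfloor_{\text{-ish}}$, and for $t$ large enough we can restrict attention to the subinterval on which $N_s \equiv N_t$. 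This is because the gaps $t_{N+1} - t_N = e^{(N+1)c} - e^{Nc}$ grow like $e^{Nc} \approx t$, which dominates $t^{(\hat{V}(\pi^k)+\delta)/c}$ exactly when $(\hat{V}(\pi^k)+\delta)/c < 1$. Hence on this window $\bar{\mu}$ evolves exactly as the homogeneous process $\mu_{N_t}$, and we may invoke Lemma~\ref{lemma:exit_time_vhat} verbatim with $N = N_t$.

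Concretely, I would first fix $\varepsilon > 0$ and apply Lemma~\ref{lemma:exit_time_vhat} to obtain $\delta_0 > 0$, $\rho > 0$, $N_0 \geq 1$ such that for all $N \geq N_0$ and $\nu \in [\pi^k]_{\rho_1} \cap M_1^N(\mathcal{Z})$,
\begin{align*}
P_{0,\nu}\bigl(\bar{\tau}_{\pi^k} \leq \exp\{N(\hat{V}(\pi^k)+\delta_0)\}\bigr) \leq \exp\{-N(\tilde{V}(\pi^k) - \hat{V}(\pi^k) - \varepsilon)\}.
\end{align*}
Then choose $\delta \in (0, c - \hat{V}(\pi^k))$ small enough that $\delta \le \delta_0$ (shrinking $\delta_0$ if necessary so that also $\hat V(\pi^k)+\delta_0 < c$). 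Now pick $t^*$ large enough that for all $t \geq t^*$: (i) $N_t \geq N_0$; (ii) the window $[t, t + t^{(\hat{V}(\pi^k)+\delta)/c}]$ lies inside a single interval $[t_{N_t}, t_{N_t+1})$ on which $N_s \equiv N_t$ — this uses $t_{N_t+1} - t \gtrsim e^{N_t c} - 2 - t$, and since $e^{N_t c} \approx 2+t$ one in fact needs to be slightly careful and instead only claim the window is contained in $[t_{N_t}, t_{N_t+2})$, so that $N_s$ takes at most two values $N_t$ and $N_t+1$; either way the comparison argument applies with $N$ replaced by $\min\{N_s : s \text{ in the window}\} \geq N_t - O(1)$; (iii) $\exp\{N_t(\hat{V}(\pi^k)+\delta)\} \geq t^{(\hat{V}(\pi^k)+\delta)/c}$ (from $N_t \geq (\log(2+t))/c - O(1)$, this holds for large $t$ after absorbing the $O(1)$ into a slight shrinkage of $\delta$ inside the exponent, or equivalently by replacing $\delta$ with $\delta/2$); and (iv) $\exp\{-N_t(\tilde{V}(\pi^k) - \hat{V}(\pi^k) - \varepsilon)\} \leq t^{-(\tilde{V}(\pi^k) - \hat{V}(\pi^k) - 2\varepsilon)/c}$. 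Combining, for $t \geq t^*$ and $\nu \in [\pi^k]_{\rho_1} \cap M_1^{N_t}(\mathcal{Z})$,
\begin{align*}
P_{t,\nu}\bigl(\bar{\tau}_{\pi^k} \leq t + t^{(\hat{V}(\pi^k)+\delta)/c}\bigr) \leq P_{0,\nu'}\bigl(\bar{\tau}_{\pi^k} \leq \exp\{N_t(\hat{V}(\pi^k)+\delta)\}\bigr) \leq t^{-(\tilde{V}(\pi^k)-\hat{V}(\pi^k)-\varepsilon)/c},
\end{align*}
where $\nu' = \bar\mu(t)$; relabelling $\varepsilon$ gives the claim. The uniformity over $\nu$ is inherited from the uniformity in Lemma~\ref{lemma:exit_time_vhat}, and uniformity over $t \geq t^*$ follows because all the threshold conditions (i)--(iv) are monotone in $t$.

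The main obstacle is the bookkeeping in step (ii)--(iii): the window length $t^{(\hat{V}(\pi^k)+\delta)/c}$ and the single-$N$-interval length $t_{N_t+1} - t_{N_t}$ are both polynomial/exponential in comparable scales, and one must verify that the strict inequality $(\hat{V}(\pi^k)+\delta)/c < 1$ gives enough room — i.e., that the $O(1)$ slack in $N_t \approx (\log(2+t))/c$ and the possibility that $N_s$ increments once inside the window do not destroy the estimate. The clean way to handle this is to prove a short auxiliary claim (analogous to what is surely established before Lemma~\ref{lemma:hs22} in the full paper): for any $a < 1$ there is $t^*$ such that for $t \geq t^*$ the interval $[t, t+t^a]$ meets at most the two consecutive frozen-population intervals indexed by $N_t$ and $N_t+1$, and on it $N_s \in \{N_t, N_t+1\}$ with $N_t \to \infty$; then the homogeneous estimate of Lemma~\ref{lemma:exit_time_vhat} applied at population level $N_t$ (a lower bound for the true population, which only helps since the exit probability is decreasing in $N$) closes the argument. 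Everything else is a direct transcription of the small-noise-diffusion argument in \cite{hwang-sheu-90}.
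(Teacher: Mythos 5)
Your proposal captures the correct essential idea, which is exactly what the paper's surrounding text and the remark after Lemma~\ref{lemma:hs22} indicate: since $(\hat{V}(\pi^k)+\delta)/c < 1$, the observation window has sub-linear length $t^{(\hat{V}(\pi^k)+\delta)/c}$ whereas the particle-count-constant interval $[t_{N_t}, t_{N_t+1})$ has length of order $t(e^c-1)$, so the window can cross at most one particle-addition time. Thus for large $t$ one can invoke the frozen-population Lemma~\ref{lemma:exit_time_vhat} with $N = N_t = \lfloor \log(2+t)/c\rfloor$ and translate the exponential time scale $\exp\{N_t(\hat{V}(\pi^k)+\delta)\}$ and the exponential probability bound $\exp\{-N_t(\tilde{V}(\pi^k)-\hat{V}(\pi^k)-\varepsilon)\}$ into the polynomial-in-$t$ forms by absorbing the $O(1)$ slack in $N_t$ into shrinkages of $\delta$ and $\varepsilon$. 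Your bookkeeping in steps (i)--(iv) on this translation is correct. The paper itself gives no explicit proof (only the tag ``see Lemma~\ref{lemma:exit_time_vhat}''), so you have reconstructed the intended route.

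There is one genuine gap, however, and it is precisely where you write that ``the exit probability is decreasing in $N$'' so that applying the homogeneous bound at the lower population level $N_t$ ``only helps.'' That monotonicity in $N$ of $P_\nu(\bar{\tau}_{\pi^k}\le T)$ is nowhere established in the paper, is not an obvious consequence of the large-deviation estimates (which give matching upper \emph{and} lower exponential bounds, not monotone couplings), and cannot simply be asserted. The rigorous way to close the argument when the window $[t,\,t+t^{(\hat{V}(\pi^k)+\delta)/c}]$ straddles $t_{N_t+1}$ is to split at the deterministic time $t_{N_t+1}$: bound $P_{t,\nu}(\bar{\tau}_{\pi^k}\le t_{N_t+1})$ by applying Lemma~\ref{lemma:exit_time_vhat} at level $N_t$ (the elapsed time $t_{N_t+1}-t$ is at most the window length, hence below $\exp\{N_t(\hat{V}(\pi^k)+\delta_0)\}$ after the usual $\delta$-shrinkage), and then use the strong Markov property at $t_{N_t+1}$ to bound the remaining piece by Lemma~\ref{lemma:exit_time_vhat} at level $N_t+1$ conditional on $\bar{\mu}(t_{N_t+1})$. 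One must also account for two things your sketch omits: first, the state jumps by $O(1/N_t)$ at $t_{N_t+1}$ (a new particle in state $z_0$ is inserted), so a slight loss of $\rho_1$ is needed; second, the conditioning state $\bar{\mu}(t_{N_t+1})$ on the event $\{\bar{\tau}_{\pi^k}>t_{N_t+1}\}$ need not lie in $[\pi^k]_{\rho_1}$ — it may be in the transient region $C$ — and one then uses Lemma~\ref{lemma:conv-fw19} to return to $\gamma$ within a bounded time before reapplying the exit estimate, exactly as in the homogeneous proof of Theorem~\ref{thm:mixing}. Replacing the monotonicity shortcut by this strong-Markov decomposition makes the proof complete; everything else in your proposal is sound.
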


Recall the definition of the sets $L$ and $C$ from Section~\ref{section:large_time_behaviour}.
\begin{lemma}[see Lemma~\ref{lemma:fw19}]
\label{lemma:conv-fw19}
Given $\rho_0 > 0$ and $\rho_1 <\rho_0$ and their associated sets $L$ and $C$, given $v>0$, there exist $T^* > 0$ and $t^* >0 $ such that
\begin{align*}
P_{t,\nu}(\hat{\tau}_L \geq t + T^*) \leq t^{-v/c}
\end{align*}
holds uniformly for all $\nu \in C \cap M_1^{N_{t}}(\mathcal{Z})$ and $t \geq t^*$.
\end{lemma}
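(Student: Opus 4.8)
The plan is to recognise $\hat\tau_L$ as the exit time of the compact set $K := M_1(\mathcal{Z})\setminus\gamma$ and to transfer the homogeneous estimate of Lemma~\ref{lemma:fw19} to the time-inhomogeneous process $\bar\mu$, exploiting two facts: on each interval $[t_N,t_{N+1})$ the process $\bar\mu$ evolves exactly as the homogeneous $N$-particle process $\mu_N$, and the inter-addition gaps $t_{N+1}-t_N=(t_N+2)(e^c-1)$ grow without bound, so that for large $t$ a window of fixed length $[t,t+T^*]$ contains at most one particle-addition time.

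First I would check that $K$ is a compact set containing no point of any $\omega$-limit set of~(\ref{eqn:MVE}): by~\ref{assm:b1} every such set lies in some $K_i\subset\gamma_i\subset\gamma$. Also $C\subset K$. Applying Lemma~\ref{lemma:fw19} to $K$ yields constants $c_0>0$, $T_0>0$, $N_1\ge1$ with $P_{\nu'}(\tau_K\ge S)\le\exp\{-Nc_0(S-T_0)\}$ for all $N\ge N_1$, $S\ge T_0$, and $\nu'\in K\cap M_1^N(\mathcal{Z})$, where $P_{\nu'}$ denotes the law of the homogeneous $N$-particle process. Since $\bar\mu(t)=\nu\in C\subset K$, for $\nu\in C\cap M_1^{N_t}(\mathcal{Z})$ the event $\{\hat\tau_L\ge t+T^*\}$ is precisely the event that $\bar\mu$ stays in $K$ throughout $[t,t+T^*)$.

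Next I would fix $T^*:=2T_0+2(v+1)/c_0$ and choose $t^*$ so large that for every $t\ge t^*$ one has $N_t\ge N_1$, $t\,(e^c-1)\ge T^*$ (which forces $t_{N_t+2}-t>(t+2)(e^c-1)>T^*$, so the window $[t,t+T^*]$ meets at most the single addition time $t_{N_t+1}$), and $t\ge e^{(v+1)c}$. Writing $N=N_t$, there are two cases. If $[t,t+T^*]\subset[t_N,t_{N+1})$, then on the window $\bar\mu$ is the homogeneous $N$-particle process started at $\nu$, so $P_{t,\nu}(\hat\tau_L\ge t+T^*)=P_\nu(\tau_K\ge T^*)\le\exp\{-Nc_0(T^*-T_0)\}$. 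If instead the window straddles $t_{N+1}$, then one of the pieces $[t,t_{N+1})$, $[t_{N+1},t+T^*]$ has length at least $T^*/2\ge T_0$; restricting the ``stays in $K$'' event to that piece and applying the Markov property at its left endpoint --- using that on $\{\hat\tau_L\ge t+T^*\}$ the process lies in $K$ at $t_{N+1}$, so the $O(1/N)$ jump there is harmless --- bounds the probability by $\sup_{\nu'\in K\cap M_1^{N'}(\mathcal{Z})}P_{\nu'}(\tau_K\ge T^*/2)\le\exp\{-N'c_0(T^*/2-T_0)\}$ with $N'\in\{N,N+1\}$. Since $c_0(T^*/2-T_0)=v+1\le c_0(T^*-T_0)$, in every case the bound $P_{t,\nu}(\hat\tau_L\ge t+T^*)\le\exp\{-N_t(v+1)\}$ holds, uniformly over $\nu\in C\cap M_1^{N_t}(\mathcal{Z})$.

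Finally I would convert this to the stated form: from $t<t_{N_t+1}=e^{(N_t+1)c}-2$ one gets $N_t>(\log t)/c-1$, hence $\exp\{-N_t(v+1)\}\le e^{v+1}\,t^{-(v+1)/c}\le t^{-v/c}$ whenever $t\ge e^{(v+1)c}$, which holds by the choice of $t^*$. The main thing to be careful about is the bookkeeping around particle additions inside the time window --- ensuring at most one addition occurs, and verifying that the small state jump at that addition does not spoil the argument (it does not, because we only use that the post-jump state still lies in $K$ on the relevant event). Everything else is a direct reuse of Lemma~\ref{lemma:fw19} and the Markov property.
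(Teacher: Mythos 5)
Your proof is correct and is essentially the argument the paper intends but does not write out: the paper states Lemma~\ref{lemma:conv-fw19} with only a pointer to its homogeneous analogue Lemma~\ref{lemma:fw19}, together with the preceding remark that for large $t$ the process $\bar{\mu}$ on a window of fixed length behaves like a homogeneous $N_t$-particle process. You have filled in the details of that transfer: identifying $M_1(\mathcal{Z})\setminus\gamma$ as the relevant compact set free of $\omega$-limit points so that Lemma~\ref{lemma:fw19} applies, observing that the inter-addition gaps $t_{N+1}-t_N$ grow, so a window of length $T^*$ meets at most one addition time for $t$ large, splitting the window at that addition time, using the Markov property there and the fact that on the relevant event the post-jump state is still outside $\gamma$, and finally converting the resulting $\exp\{-N_t(v+1)\}$ bound into $t^{-v/c}$ via $N_t > (\log t)/c - 1$. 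The choice $T^* = 2T_0 + 2(v+1)/c_0$ is exactly what is needed so that the shorter half of the split window still has length at least $T_0$ and gives the exponent $v+1$, and the requirement $t \ge e^{(v+1)c}$ absorbs the constant $e^{v+1}$. The one point worth stressing --- and you do address it --- is that the particle-addition jump does not need to be controlled quantitatively: it suffices that on $\{\hat\tau_L \ge t+T^*\}$ the state at the addition time is in $K$, so the conditional bound from Lemma~\ref{lemma:fw19} applies to the second piece with $N+1$ particles. Everything checks out.
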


To answer the question on the convergence of $\bar{\mu}$ to a global minimum of $s$, we define the following quantities, analogous to what is done in Hwang and Sheu~\cite{hwang-sheu-90}. Let $m$ be such that $L_{m+1}$ is a singleton (denote it by $\{\pi^{m+1}\}$). Define
\begin{align*}
A_m \coloneqq \{\pi^m \in L_m:\tilde{V}(\pi^m) = \hat{V}(\pi^{m+1})\}.
\end{align*}
Inductively define, for each $\pi^{k+1} \in L_{k+1}$,
\begin{align*}
A_{k}(\pi^{k+1}) \coloneqq \{\pi^k \in \pi^{k+1} : \tilde{V}(\pi^k) = \hat{V}(\pi^{k+1})\},
\end{align*}
and for each $k \geq 1$, define
\begin{align*}
A_k \coloneqq \bigcup_{\pi^{k+1} \in A_{k+1}} A_k(\pi^{k+1}).
\end{align*}
Also, for each  $\pi^k \in L_k$, define
\begin{align*}
c_{k-1}(\pi^k)  \coloneqq \left\{\begin{array}{ll}
0,  \text{ if } \{ \pi^{k-1} \in \pi^k : \pi^{k-1} \notin A_{k-1}(\pi^k) \} = \emptyset, \\
\max\{\tilde{V}(\pi^{k-1}):\pi^{k-1} \notin A_{k-1}(\pi^k), \pi^{k-1} \in \pi^k\}, \text{  otherwise},
\end{array}
\right.
\end{align*}
and for each $k \geq 1$, define
\begin{align*}
c_{k-1} \coloneqq \max\{c_{k-1}(\pi^k), :\pi^k \in A_k\}.
\end{align*}
Finally, define
\begin{align*}
c^* \coloneqq \max\{c_k, 0 \leq k \leq m\}.
\end{align*}
Similar to~\cite[Lemma~A.11,~Appendix]{hwang-sheu-90}, we can show that $A_0 = \tilde{L}_0$, the set of minimisers of the rate function $s$ that governs the LDP for the invariant measure $\{\wp_N\}_{N\geq 1}$. We now prove Theorem~\ref{thm:conv-globalmin} on convergence of $\bar{\mu}$ to the set of global minimisers.
\begin{proof}[Proof of Theorem~\ref{thm:conv-globalmin}]
It suffices to show that, for any $\delta > 0$ with $(c^*+\delta)/c<1$, there exist $\varepsilon > 0$, $\rho_1>0$ and $t^* > 0$ such that
\begin{align*}
P_{t, \nu} (\bar{\mu}(t + t^{(c^*+\delta)/c}) \in [\tilde{L}_0]_{\rho_1}) \geq 1- t^{-\varepsilon/c}
\end{align*}
for all $t > t^*$ and $\nu \in M_1^{N_{t}}(\mathcal{Z})$. Define the stopping time
\begin{align*}
\theta \coloneqq \inf\{s>t: \bar{\mu}(s) \in [L]_{\rho_1}\}.
\end{align*}
By Lemma~\ref{lemma:conv-fw19}, for any $M >0$, there exists $T^*>0$ such that for all $\nu \in M_1^{N_0}(\mathcal{Z})$ and large enough $t$, we have
\begin{align*}
P_{t, \nu} (\theta > t + T^*) \leq t^{-M/c}.
\end{align*}
By the strong Markov property, we have
\begin{align}
P_{t, \nu}&( \bar{\mu}(t + t^{(c^*+\delta)/c})\in [\tilde{L}_0]_{\rho_1}) \nonumber \\
& \geq E_{t,\nu} (\theta \leq t + T^*; E_{\theta, \bar{\mu}(\theta)} (\bar{\mu}(t+t^{(c^*+\delta)/c}) \in [\tilde{L}_0]_{\rho_1})) \nonumber \\
& \geq \inf_{\substack{t \leq t_1 \leq t+T^* \\ \nu_1 \in [L]_{\rho_1}}} P_{t_1, \nu_1} (\bar{\mu}(t+t^{(c^*+\delta)/c}) \in [\tilde{L}_0]_{\rho_1}) (1-t^{-M/c}).
\label{eqn:temp7}
\end{align}
To bound the first term above, fix a $t_1$ such that $t \leq t_1 \leq t+T^*$ and $\nu_1 \in [L]_{\rho_1}$. Define the stopping time $\theta_m \coloneqq \inf \{t>t_1: \bar{\mu}(t) \in [A_m]_{\rho_1}\}$. We have
\begin{align}
P_{t_1, \nu_1} &(\bar{\mu}(t+t^{(c^*+\delta)/c}) \in [\tilde{L}_0]_{\rho_1}) \nonumber \\
& \geq E_{t_1, \nu_1} (\theta_m < t+t^{(c^*+\delta/2)/c}; E_{\theta_m, \bar{\mu}(\theta_m)}(\bar{\mu}(t+t^{(c^*+\delta)/c}) \in [\tilde{L}_0]_{\rho_1})) \nonumber \\
& \geq \inf_{t \leq t_2 \leq t+t^{(c^*+\delta/2)/c}, \nu_2 \in [A_m]_{\rho_1}} P_{t_2, \nu_2}(\bar{\mu}(t+t^{(c^*+\delta)/c}) \in [\tilde{L}_0]_{\rho_1})  \nonumber \\
& \, \, \, \, \, \, \, \, \, \, \times P_{t_1, \nu_1}(\theta_m  \leq t+t^{(c^*+\delta/2)/c}). \label{eqn:temp6}
\end{align}
We first bound the second term $P_{t_1, \nu_1}(\theta_m  \leq t+t^{(c^*+\delta/2)/c})$. Note that, by Lemma~\ref{lemma:hs22}, for any $M_1>0$, there exists $\delta_1>0$ such that
\begin{align*}
P_{t_1, \nu_1}(\theta_m  > t_1+t_1^{(c_m-\delta_1)/c}) \leq 1-t_1^{-M_1/c}
\end{align*}
for sufficiently large $t$. Let $T_1 = t_1 + t_1^{(c_m-\delta_1)/c}$, and define the stopping time $\hat{\theta} \coloneqq \inf\{t>T_1: \bar{\mu}(t) \in [L]_{\rho_1} \}$. Again, by Lemma~\ref{lemma:conv-fw19}, there exists a large enough $T^*$ such that $P_{T_1, \nu}(\hat{\theta} > T_1+T^*) \leq T_1^{-M/c}$ for all $\nu \in M_1^{N_{T_1}}(\mathcal{Z})$. Therefore, using the strong Markov property, we have
\begin{align}
P_{t_1, \nu_1}&(\theta_m  > t+t^{(c^*+\delta/2)/c}) \nonumber \\
& \leq E_{t_1, \nu_1}(\theta_m \geq \hat{\theta}, \hat{\theta}< T_1+T^*; E_{\hat{\theta}, \bar{\mu}(\hat{\theta})}(\theta_m >t+t^{(c^*+\delta/2)/c})) \nonumber \\
& \, \, \, \, \, \, \, \, + P_{t_1, \nu_1}(\hat{\theta} > T_1+T^*) \nonumber \\
& \leq P_{t_1, \nu_1}(\theta_m > T_1) \sup_{\substack{T_1 \leq t \leq T_1 + T^* \\  \nu \in [L]_{\rho_1}}} P_{t,\nu}(\theta_m > t+t^{(c^*+\delta/2)/c}) +t_1^{-M/c} \nonumber\\
& \leq (1-t_1^{-M_1/c} ) \sup_{\substack{T_1 \leq t \leq T_1 + T^* \\ \nu \in [L]_{\rho_1}}} P_{t,\nu}(\theta_m > t+t^{(c^*+\delta/2)/c}) +t_1^{-M/c}.  \label{eqn:temp5}
\end{align}
We now focus on $P_{t,\nu}(\theta_m > t+t^{(c^*+\delta/2)/c})$ for a fixed $t \in [T_1, T_1+T^*]$ and $\nu \in [L]_{\rho_1}$, and repeat the above steps; this will introduce a multiplication factor of $(1-T_1^{-M_1/c})$ along with
\begin{align*}
\sup_{\substack{T_2 \leq t \leq T_2 + T^* \\ \nu \in [L]_{\rho_1}}} P_{t,\nu}(\theta_m > t+t^{(c^*+\delta/2)/c}),
\end{align*}
where $T_2 = T_1 + T_1^{(c_m-\delta_1)/c}$, in the first term in~(\ref{eqn:temp5}), and an addition of $t_1^{-M/c}$ in the second term. Therefore, repeating the above steps $r \sim t_1^{\delta/2c}$ times, we get
\begin{align*}
P_{t_1, \nu_1}(\theta_m  > t+t^{(c^*+\delta/2)/c}) \leq  \prod_{n=0}^r (1-T_n^{*{-M_1/c}}) + rt_1^{-M/c},
\end{align*}
where $T_0^* = t_1$, and
\begin{align*}
T_{n+1}^* = T_n^* + T_n^{*{(c_m-\delta_1)/c}} + T^*.
\end{align*}
Note that,
\begin{align}
 \prod_{n=0}^r (1-T_n^{*{-M_1/c}}) & \leq  \exp\left\{-\sum_{n=0}^r T_n^{*{-M_1/c}} \right\} \nonumber \\
 & = \exp\left\{ -\sum_{n=0}^r T_n^{*{-M_1/c - (c_m-\delta_1)/c}}(T_{n+1}^* - T_n^*) \right\} \nonumber \\
 & \leq  \exp\left\{- \int_{T_0^*}^{T_r^*} u^{-(M_1/c) - (c_m-\delta_1)/c} du\right\} \nonumber \\
& = \exp \left\{ - \left( T_r^{*{1-(c_m+M_1-\delta_1)/c}} -  t_1^{1-(c_m+M_1-\delta_1)/c} \right)\right\}. \label{eqn:temp14}
\end{align}
Since $T_n \geq t_1$ for all $n \geq 1$, we see that $T_r^* \geq t_1 + r t_1^{(c_m-\delta_1)/c} \sim t_1 + t_1^{(c_m-\delta_1+\delta/2)/c}$. Therefore,
\begin{align*}
-  & \left(T_r^{*{1-(c_m+M_1-\delta_1)/c}} -  t_1^{1-(c_m+M_1-\delta_1)/c} \right) \\
& \leq -\left( (t_1+t_1^{(c_m-\delta_1+\delta/2)/c})^{1-(c_m+M_1-\delta_1)/c} - t_1^{1-(c_m+M_1-\delta_1)/c}\right) \\
& \leq -\left( t_1^{1-(c_m+M_1-\delta_1)/c} \left( 1+t_1^{(c_m-\delta_1+\delta/2)/c-1}\right)^{1-(c_m+M_1-\delta_1)/c} -1\right) \\
& \leq -c^\prime \left(t_1^{1-(c_m+M_1-\delta_1)/c} t_1^{(c_m-\delta_1+\delta/2)/c-1} \right)  \\
& = -c^\prime t_1^{(\delta/2 - M_1)/c},
\end{align*}
for some constant $c^\prime  >0$ and large enough $t_1$. Hence,~(\ref{eqn:temp14}) becomes
\begin{align*}
 \prod_{n=0}^r (1-T_n^{*^{-M_1/c}}) \leq \exp\{-c^\prime t_1^{(\delta/2-M_1)/c}\}.
\end{align*}
We choose $M_1 = \delta/4$; the above and~(\ref{eqn:temp5}) then implies
\begin{align*}
P_{t_1, \nu_1}&(\theta_m  > t+t^{(c^*+\delta/2)/c}) \leq \exp\{-c^\prime t_1^{\delta/4c}\} + t_1^{-(M-\delta/2)/c},
\end{align*}
and this implies that, for any $M^\prime > 0$,
\begin{align}
P_{t_1,\nu_1} (\theta_m > t+t^{(c^*+\delta/2)/c}) \leq t^{-M^\prime/c}
\label{eqn:temp13}
\end{align}
for sufficiently large $t$, $t \leq t_1 \leq t+T^*$ and for all $\nu \in [L]_{\rho_1}$.

We now bound the first term in~(\ref{eqn:temp6}), $P_{t_2,\nu_2}(\bar{\mu}(t+t^{(c^*+\delta)/c}) \in [\tilde{L}_0]_{\rho_1})$ where $t \leq t_2 \leq t + t^{(c^*+\delta/2)/c}$ and $\nu_2 \in [A_m]_{\rho_1}$.  Let $\pi_0^m \in A_m$ be the $m$-cycle such that $\nu_2 \in [\pi_0^m]_{\rho_1}$. Define the following quantities:
\begin{align*}
\tilde{t}_0 \coloneqq t+t^{(c^*+\delta)/c} - t^{(c_{m-1}(\pi^m_0)+\delta)/c}, \text{ and } \\
\tilde{t}_1 \coloneqq t+t^{(c^*+\delta)/c} - t^{(c_{m-1}(\pi^m_0)+\delta/2)/c}.
\end{align*}
Define the stopping time $\theta \coloneqq \inf \{t>\tilde{t}_0: \bar{\mu}(t) \in [\pi^m_0]_{\rho_1}\}$, if $c^* > c_{m-1}(\pi^m_0)$ and $\theta = t_2$ otherwise. By the strong Markov property,
\begin{align}
P_{t_2,\nu_2}& (\bar{\mu}(t+t^{(c^*+\delta)/c}) \in [\tilde{L}_0]_{\rho_1}) \nonumber \\
& \geq E_{t_2,\nu_2}(\theta \leq \tilde{t}_1; E_{\theta, \bar{\mu}(\theta)}(\bar{\mu}(t+t^{(c^*+\delta)/c}) \in [\tilde{L}_0]_{\rho_1} )\nonumber \\
& \geq P_{t_2, \nu_2} (\theta \leq \tilde{t}_1) \inf_{\tilde{t}_0 \leq t_3 \leq \tilde{t}_1, \nu_3 \in [\pi_0^m]_{\rho_1}} P_{t_3,\nu_3}(\bar{\mu}(t+t^{(c^*+\delta)/c}) \in [\tilde{L}_0]_{\rho_1}).
\label{eqn:temp9}
\end{align}
We first estimate $P_{t_2, \nu_2}(\theta \leq \tilde{t}_1)$ when $c^* > c_{m-1}(\pi_0^m)$ (if this is not the case, then by definition of $\theta$, we have $P_{t_2, \nu_2}(\theta \leq \tilde{t}_1)=1$) . Note that
\begin{align*}
P_{t_2,\nu_2} (\theta > \tilde{t}_1)  & = P_{t_2,\nu_2} (\bar{\mu}(t) \notin [\pi^m_0]_{\rho_1} \text{ for all } \tilde{t}_0 \leq t \leq \tilde{t}_1) \\
& \leq P_{t_2,\nu_2} (\bar{\mu}(t) \notin [L]_{\rho_1} \text{ for all } \tilde{t}_0 \leq t \leq \tilde{t}_1)  + P_{t_2, \nu_2}(\bar{\tau}_{\pi^m_0} \leq \tilde{t}_1).
\end{align*}
Lemma~\ref{lemma:hs23} implies that
\begin{align*}
 P_{t_2, \nu_2}(\bar{\tau}_{\pi^m_0} \leq \tilde{t}_1) \leq t^{-\delta/c}
\end{align*}
for large $t$ and small enough $\rho_1 > 0$. Also, with this $\rho_1$, by using Lemma~\ref{lemma:conv-fw19}, we see that
\begin{align*}
P_{t_2,\nu_2} (\bar{\mu}(t) \notin [L]_{\rho_1} \text{ for all } \tilde{t}_0 \leq t \leq \tilde{t}_1)  \leq t^{-M_1/c}
\end{align*}
for large $t$, where $M_1$ can be chosen as large as we want. This shows that there exists $\varepsilon_1>0$ such that
\begin{align*}
P_{t_2,\nu_2} (\theta \leq \tilde{t}_1) \geq 1-2t^{-\varepsilon_1/c}
\end{align*}
uniformly for all $\nu_2 \in [\pi_0^m]_{\rho_1}$ and large enough $t$. Hence, from~(\ref{eqn:temp13}), (\ref{eqn:temp9}) and~(\ref{eqn:temp6}), we get
\begin{align*}
P_{t_1,\nu_1}&(\bar{\mu}(t+t^{(c^*+\delta)/c}) \in [\tilde{L}_0]_{\rho_1}) \\
& \geq (1-t^{-M^\prime/c}) (1-2t^{-\varepsilon_1/c})  \times \inf_{\substack{t_2 \geq \tilde{t}_0, \\ \nu_2 \in [\pi_0^m]_{\rho_1} \\  \pi_0^m \in A_m \\  \tilde{\delta} \in [\delta/4, \delta]}} P_{t_2,\nu_2}(\bar{\mu}(t_2+t_2^{(c_{m-1}(\pi^m_0)+\tilde{\delta})/c}) \in [\tilde{L}_0]_{\rho_1})
\end{align*}
and therefore, for some $\varepsilon>0$, we have
\begin{align*}
\inf_{\substack{t \leq t_1  \leq t+T^*, \\ \nu_1 \in [L]_{\rho_1}}}&P_{t_1,\nu_1}(\bar{\mu}(t+t^{(c^*+\delta)/c}) \in [\tilde{L}_0]_{\rho_1}) \\
& \geq (1-t^{-\varepsilon/c}) \times \inf_{\substack{t_2 \geq \tilde{t}_0 \\ \nu_2 \in [\pi^m_0]_{\rho_1} \\ \pi_0^m \in A_m \\ \tilde{\delta} \in [\delta/4, \delta]}} P_{t_2,\nu_2}(\bar{\mu}(t_2+t_2^{(c_{m-1}(\pi^m_0)+\tilde{\delta})/c}) \in [\tilde{L}_0]_{\rho_1}).
\end{align*}
We now focus on the second term. This probability inside the infimum can be lower bounded using similar steps above starting with~(\ref{eqn:temp9}); instead of the random variable $\theta$, we consider the hitting time of a suitable $(m-1)$-cycle.  Continuing this procedure $m$ times, we eventually reach $A_0$. Therefore, we can show
\begin{align*}
\inf_{\substack{t \leq t_1  \leq t+T^* \\ \nu_1 \in [\tilde{L}_0]_{\rho_1}}}&P_{t_1,\nu_1}(\bar{\mu}(t+t^{(c^*+\delta)/c}) \in [\tilde{L}_0]_{\rho_1})  \geq (1-t^{-\varepsilon/c})^{m+1},
\end{align*}
and the result now follows from~(\ref{eqn:temp7}).
\end{proof}

We now show that the conclusion of Theorem~\ref{thm:conv-globalmin} fails if we choose $c < c^*$. Given $c < c^*$, let $\pi^k \in L_k$ be such that $\hat{V}(\pi^k) \leq c < \tilde{V}(\pi^k)$; this is possible from the definition of $c^*$. Note that $\tilde{L}_0 \cap \pi^k = \emptyset$. The below result shows that the exit time from a neighbourhood of $\pi^k$ is infinite with positive probability, and this in particular implies that~(\ref{eqn:conv_globalmin}) fails.
\begin{proposition}
Let $\pi^k$ be a $k$-cycle such that $\hat{V}(\pi^k) \leq c < \tilde{V}(\pi^k)$. There exist $\varepsilon \in (0, \tilde{V}(\pi^k)-c)$, $c^\prime >0, \rho_1>0$ and $t^*>0$ such that for all $\nu \in [\pi^k]_{\rho_1}\cap M_1^{N_{t}}(\mathcal{Z})$ and $t \geq t^*$, we have
\begin{align*}
P_{t, \nu}(\bar{\tau}_{\pi^k} <\infty) \leq c^\prime t^{1-(\tilde{V}(\pi^k) - \varepsilon)/c}.
\end{align*}
\end{proposition}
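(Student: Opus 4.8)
The plan is to adapt the ``trapping'' argument of Hwang and Sheu~\cite[Part III]{hwang-sheu-90}. Since $\hat{V}(\pi^k)\le c$, Lemma~\ref{lemma:hs24} says that whenever $\bar{\mu}$ sits in a small neighbourhood of $\pi^k$ at a large time $s$ it stays inside $\pi^k$ until time $s+s^{(\hat{V}(\pi^k)+\delta)/c}$ with probability at least $1-s^{-(\tilde{V}(\pi^k)-\hat{V}(\pi^k)-\varepsilon)/c}$; moreover, once it wanders out of all neighbourhoods it re-enters $[L]_{\rho_1}$ within a bounded time by Lemma~\ref{lemma:conv-fw19}. Iterating over successive re-entries into $\gamma_{\pi^k}$, and using that the allowed sojourn windows grow (so that $T_n$ grows faster than linearly in the round index $n$), the resulting series of exit probabilities converges, precisely because $\tilde{V}(\pi^k)>c$, and its head is of the claimed order.

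Concretely, fix $\varepsilon\in(0,\tilde{V}(\pi^k)-c)$ and take $\delta>0$, $\rho>0$ as produced by Lemma~\ref{lemma:hs24} for this $\varepsilon$ (we may assume $\hat{V}(\pi^k)<c$, the boundary case following by approximation, so $(\hat{V}(\pi^k)+\delta')/c<1$ for some $\delta'>\delta$); fix $\rho_1<\rho$ small enough that the $\gamma_i$ are pairwise disjoint and both Lemmas~\ref{lemma:hs24} and~\ref{lemma:conv-fw19} apply, and let $T^*$ be the constant from Lemma~\ref{lemma:conv-fw19} (with parameter $v$ chosen large below). Starting from $T_0:=t$ with $\bar{\mu}(T_0)=\nu\in[\pi^k]_{\rho_1}$, define recursively the stopping times
\[
s_n:=T_n+T_n^{(\hat{V}(\pi^k)+\delta)/c},\qquad T_{n+1}:=\inf\{u\ge s_n:\bar{\mu}(u)\in[L]_{\rho_1}\},
\]
and let $G_n$ be the event that $\bar{\tau}_{\pi^k}>s_n$, $T_{n+1}\le s_n+T^*$, and $\bar{\mu}(T_{n+1})\in\gamma_{\pi^k}$. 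The structural point is that on $\{\bar{\tau}_{\pi^k}>s_n\}$ the process visits no $\gamma_i$ on $[s_n,T_{n+1})$, so at $T_{n+1}$ it enters either $\gamma_{\pi^k}$---whence $\bar{\tau}_{\pi^k}>T_{n+1}$ and the induction continues with $\bar{\mu}(T_{n+1})\in[\pi^k]_{\rho_1}$---or some $\gamma_j$, $j\notin\pi^k$, which is exactly the event $\{\bar{\tau}_{\pi^k}=T_{n+1}\}$. Hence on $G_0\cap\cdots\cap G_{n-1}$ one has $\bar{\tau}_{\pi^k}>T_n$ and $\bar{\mu}(T_n)\in[\pi^k]_{\rho_1}$, so applying the strong Markov property at $T_n$: Lemma~\ref{lemma:hs24} (with window enlarged to $s_n+T^*-T_n=T_n^{(\hat{V}(\pi^k)+\delta)/c}+T^*\le T_n^{(\hat{V}(\pi^k)+\delta')/c}$ for large $T_n$) bounds the probability that $\bar{\tau}_{\pi^k}\le s_n+T^*$ by $T_n^{-(\tilde{V}(\pi^k)-\hat{V}(\pi^k)-\varepsilon)/c}$, and Lemma~\ref{lemma:conv-fw19} bounds the probability that $\bar{\mu}$ fails to reach $[L]_{\rho_1}$ within time $T^*$ after $s_n$ by $s_n^{-v/c}$. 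Choosing $v>\tilde{V}(\pi^k)-\hat{V}(\pi^k)-\varepsilon$, the conditional probability of $G_n^{\,c}$ given $G_0\cap\cdots\cap G_{n-1}$ is at most $2\,T_n^{-(\tilde{V}(\pi^k)-\hat{V}(\pi^k)-\varepsilon)/c}$.

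Since $T_{n+1}\ge T_n+T_n^{(\hat{V}(\pi^k)+\delta)/c}$ always, $T_n$ is increasing and $T_n\to\infty$; in fact, with $\beta:=(\hat{V}(\pi^k)+\delta)/c<1$ one gets $T_n^{1-\beta}\ge t^{1-\beta}+c_2 n$ for a constant $c_2>0$, so $T_n$ grows polynomially in $n$. Consequently, if all $G_n$ occur then $\bar{\tau}_{\pi^k}=\infty$, so $\{\bar{\tau}_{\pi^k}<\infty\}$ is contained in the disjoint union over $n$ of $G_0\cap\cdots\cap G_{n-1}\cap G_n^{\,c}$, and a union bound together with the previous paragraph gives
\[
P_{t,\nu}(\bar{\tau}_{\pi^k}<\infty)\le 2\sum_{n\ge0}T_n^{-(\tilde{V}(\pi^k)-\hat{V}(\pi^k)-\varepsilon)/c}\le 2\sum_{n\ge0}\bigl(t^{1-\beta}+c_2 n\bigr)^{-(\tilde{V}(\pi^k)-\hat{V}(\pi^k)-\varepsilon)/(c(1-\beta))}.
\]
With $a:=(\tilde{V}(\pi^k)-\hat{V}(\pi^k)-\varepsilon)/c$, the exponent of $(t^{1-\beta}+c_2n)$ is $-a/(1-\beta)$, which exceeds $1$ in absolute value iff $a+\beta>1$, i.e.\ $\tilde{V}(\pi^k)-\varepsilon+\delta>c$; this holds since $\varepsilon<\tilde{V}(\pi^k)-c$ and $\delta>0$. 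The sum is then bounded by a constant times $(t^{1-\beta})^{1-a/(1-\beta)}=t^{\,1-\beta-a}=t^{\,1-(\tilde{V}(\pi^k)-\varepsilon+\delta)/c}\le t^{\,1-(\tilde{V}(\pi^k)-\varepsilon)/c}$ for $t\ge1$, giving the assertion with a suitable $c'$ and $t^*$.

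The main obstacle is not any individual estimate---those are exactly Lemmas~\ref{lemma:hs24} and~\ref{lemma:conv-fw19}---but the bookkeeping that splices them together: verifying that an exit of $\pi^k$ can happen only during a long sojourn window (controlled by Lemma~\ref{lemma:hs24} after the $O(1)$ enlargement of the window by $T^*$) or as a ``wrong'' re-entry into $[L]_{\rho_1}$, which is the same exit event; ensuring the strong Markov property is applied at genuine stopping times with $\bar{\mu}(T_n)$ in the right set and the elapsed time past the thresholds of both lemmas; and carrying out the summation with the sharp convergence condition $\tilde{V}(\pi^k)>c$ rather than the cruder $\tilde{V}(\pi^k)-\hat{V}(\pi^k)>c$, which is legitimate only because the growing sojourn windows $T_n^{(\hat{V}(\pi^k)+\delta)/c}$ force $T_n$ to grow superlinearly in $n$.
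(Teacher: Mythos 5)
Your argument is correct and rests on the same core strategy as the paper's proof: iterate the one-round sojourn estimate of Lemma~\ref{lemma:hs24}, interleaved with the fast re-entry estimate of Lemma~\ref{lemma:conv-fw19}, and show that the resulting series of per-round exit probabilities converges with a tail of the claimed polynomial order, the convergence coming from $\tilde{V}(\pi^k)-\varepsilon>c$.

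The bookkeeping differs in a way worth noting. The paper takes the round boundaries $T_n$ to be \emph{deterministic}, defined by $T_{n+1}=T_n+T_n^{\hat{V}(\pi^k)/c}$ (and intermediate times $T_n^*$), telescopes $P_{t,\nu}(\bar{\tau}_{\pi^k}<T_r)$ over these intervals, and within each interval introduces a stopping time $\theta$ of first entry into $[L]_{\rho_1}$ to which Lemmas~\ref{lemma:hs24} and~\ref{lemma:conv-fw19} are applied via strong Markov. It then converts $\sum_n T_n^{-(\tilde V-\hat V-\varepsilon)/c}$ into $\int_t^{\infty}u^{-(\tilde V-\varepsilon)/c}du$ by observing that $T_n^{-(\tilde V-\hat V-\varepsilon)/c}=T_n^{-(\tilde V-\varepsilon)/c}(T_{n+1}-T_n)$, i.e.\ the increment itself supplies the Riemann-sum weight. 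You instead make the round boundaries $T_n$ \emph{random} stopping times (re-entry into $[L]_{\rho_1}$ after the sojourn window), cover $\{\bar{\tau}_{\pi^k}<\infty\}$ by the ``first failed round'' events $G_0\cap\cdots\cap G_{n-1}\cap G_n^c$, and replace the Riemann-sum device by the explicit deterministic bound $T_n^{1-\beta}\ge t^{1-\beta}+c_2n$ before summing. Both reductions land on the same exponent $1-(\tilde V(\pi^k)-\varepsilon)/c$ (yours with a harmless extra $\delta$). Your version makes the round structure and the role of superlinear growth of $T_n$ in $n$ more transparent; the price is that $T_n$ is now random, which you handle correctly by passing to the deterministic lower bound on $T_n$ before the union bound. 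One small point: when you apply Lemma~\ref{lemma:hs24} at the stopping time $T_n$, it is exactly the uniformity in $t\ge t^*$ of that lemma that licenses the application, and you should also record that on $G_0\cap\cdots\cap G_{n-1}$ one has $\bar\mu(T_n)\in[\pi^k]_{\rho_1}\cap M_1^{N_{T_n}}(\mathcal{Z})$ so the hypotheses are met at the random time — but this is the same issue the paper faces with its stopping time $\theta$, and both treatments are sound.
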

\begin{proof}
We proceed via the steps in Hwang and Sheu~\cite{hwang-sheu-90}. Let $T_0 = t$, and define, for all $n \geq 1$,
\begin{align*}
T_{n+1} \coloneqq T_n + T_n^{\hat{V}(\pi^k)/c}, \text{ and}\\
T^*_{n+1} \coloneqq T_n + \frac{1}{2} T_n^{\hat{V}(\pi^k)/c}.
\end{align*}
(In the above definitions, we assume that $\hat{V}(\pi^k) > 0$; if this is not the case, then we replace $T_n^{\hat{V}(\pi^k)/c}$ in the above definitions by a sufficiently large constant, and the following arguments will go through.)
We have, for any $r \geq 1$,
\begin{equation}
P_{t, \nu} (\bar{\tau}_{\pi^k} <T_r) = P_{t, \nu} (\bar{\tau}_{\pi^k} <T_{r-1}) + P_{t, \nu} (T_{r-1} \leq \bar{\tau}_{\pi^k} <T_r).
\label{eqn:temp3}
\end{equation}
To bound the second term, define the stopping time $\theta \coloneqq \inf\{t>T^*_{r-1}:\bar{\mu}(t) \in [L]_{\rho_1}\}$ where $\rho_1$ is to be chosen later. Then,
\begin{align}
P_{t, \nu} (T_{r-1} \leq \bar{\tau}_{\pi^k} <T_r) &  = P_{t, \nu} (T_{r-1} \leq \bar{\tau}_{\pi^k}  <T_r, \theta \leq T_{r-1}^*+T^* )  \nonumber \\
&\, \, \, \, \, \, \, \, \, \, + P_{t, \nu} (T_{r-1} \leq \bar{\tau}_{\pi^k} <T_r, \theta > T_{r-1}^*+T^*) ,
\label{eqn:temp4}
\end{align}
where $T^*$ is such that the second term above is upper bounded by $T_{r-1}^{*^{-M/c}}$ for some $M>0$ to be chosen later (this is possible by Lemma~\ref{lemma:conv-fw19}). To bound the first term, note that
\begin{align*}
P_{t, \nu} (T_{r-1} \leq &\bar{\tau}_{\pi^k} <T_r,  \theta \leq T_{r-1}^*+T^* )  \\
& \leq P_{t, \nu} (\theta\leq \bar{\tau}_{\pi^k} <T_r, \theta \leq T_{r-1}^*+T^* ) \\
& \leq E_{t,\nu} (\bar{\mu}(\theta) \in [\pi^k]_{\rho_1}, \theta \leq  T_{r-1}^*+T^* ; E_{\theta, \bar{\mu}(\theta)}(\bar{\tau}_{\pi^k} < T_r)) \\
& \leq T_{r-1}^{*-(\tilde{V}(\pi^k) - \hat{V}(\pi^k) - \varepsilon)/c}
\end{align*}
holds for sufficiently large $t$ and small enough $\rho_1$. Here, the second inequality follows by the strong Markov property and the third from Lemma~\ref{lemma:hs24}. Choose $M$ sufficiently large, so that~(\ref{eqn:temp3}),~(\ref{eqn:temp4}) and the above implies
\begin{align*}
P_{t,\nu}(\bar{\tau}_{\pi^k} < T_r) \leq P_{t,\nu}(\bar{\tau}_{\pi^k} < T_{r-1}) + 2T_{r-1}^{*^{-(\tilde{V}(\pi^k) - \hat{V}(\pi^k) -\varepsilon)/c}}.
\end{align*}
Therefore, we have
\begin{align*}
P_{t,\nu}(\bar{\tau}_{\pi^k} < T_r)  &\leq 2\sum_{n=0}^r T_n^{*^{-(\tilde{V}(\pi^k) - \hat{V}(\pi^k) -\varepsilon)/c}} \\
& \leq c_1^\prime  \sum_{n=0}^r T_n^{-(\tilde{V}(\pi^k) - \hat{V}(\pi^k) -\varepsilon)/c} \\
& = c_1^\prime \sum_{n=0}^r T_n^{-(\tilde{V}(\pi^k) - \varepsilon)/c} (T_{n+1} - T_n)\\
& \leq c_1^\prime \int_{t}^{T_r} u^{-(\tilde{V}(\pi^k)-\varepsilon)/c} du,
\end{align*}
where $c_1^\prime $ is a positive constant. Choose $\varepsilon $ such that $\tilde{V}(\pi^k) - \varepsilon > c$ so that the above implies
\begin{align*}
P_{t,\nu}(\bar{\tau}_{\pi^k} < T_r) &\leq c_1^\prime \int_{t}^ \infty u^{-(\tilde{V}(\pi^k)-\varepsilon)/c} du \\
& \leq c^\prime t^{1-(\tilde{V}(\pi^k)-\varepsilon)/c},
\end{align*}
where $c^\prime$ is a positive constant. Let $r \to \infty$, and the result follows since $T_r \to  \infty$.
\end{proof}

\section*{Acknowledgements}
The authors would like to thank Laurent Miclo for fruitful discussions and Siva Athreya for suggestions on the organisation of the paper.
\bibliographystyle{abbrv}
\bibliography{Report}
\end{document}